\documentclass[11pt]{amsart}  

\usepackage{amsbsy,amsmath,amsthm,amssymb,color,verbatim}
\usepackage[normalem]{ulem}
\usepackage{mathtools, mathdots}
\usepackage{soul}
\usepackage{cancel,shuffle}
\usepackage{stackrel}
\usepackage{fdsymbol}
\usepackage{multicol}
\usepackage[all]{xy}
\usepackage{xargs}

\usepackage{float}
\usepackage{bbm}
\usepackage{fullpage,cancel}
\usepackage[breaklinks=true]{hyperref}
\usepackage{cleveref}
\hypersetup{
    colorlinks=true,
    linkcolor=blue!90!black,  
    citecolor=green!75!black, 
    urlcolor=teal             
}
\usepackage{float}
\usepackage{wrapfig}
\usepackage[dvipsnames]{xcolor}
\usepackage{graphicx}
\usepackage{subcaption}
\usepackage{multicol}
\usepackage{multirow}
\usepackage{longtable}
\usepackage{makecell}
\usepackage{supertabular}
\usepackage{tikz}
\definecolor{darkgreen}{cmyk}{.9,0,.9,.2}
\definecolor{midgray}{gray}{0.60}
\definecolor{lightgray}{gray}{0.90}
\definecolor{lmgray}{gray}{0.70}
\usepackage{colortbl}
\usepackage{standalone}
\usetikzlibrary{arrows.meta,calc,decorations.markings,math,arrows.meta}

\tikzset{->-/.style={decoration={
  markings,
  mark=at position #1 with {\arrow{>}}},postaction={decorate}}}
\tikzset{-<-/.style={decoration={
  markings,
  mark=at position #1 with {\arrow{<}}},postaction={decorate}}}

\usepackage{algorithm}
\usepackage{algpseudocode}
\usepackage{stackengine,wasysym}
\usepackage{lipsum}
\newcommand\m[1]{\begin{pmatrix}#1\end{pmatrix}}
\newcommand{\mathdash}{\relbar\mkern-9mu\relbar}

\makeatletter
\newcommand*{\da@rightarrow}{\mathchar"0\hexnumber@\symAMSa 4B }
\newcommand*{\da@leftarrow}{\mathchar"0\hexnumber@\symAMSa 4C }
\newcommand*{\xdashrightarrow}[2][]{%
  \mathrel{%
    \mathpalette{\da@xarrow{#1}{#2}{}\da@rightarrow{\,}{}}{}%
  }%
}
\newcommand{\xdashleftarrow}[2][]{%
  \mathrel{%
    \mathpalette{\da@xarrow{#1}{#2}\da@leftarrow{}{}{\,}}{}%
  }%
}
\newcommand*{\da@xarrow}[7]{%
  \sbox0{$\ifx#7\scriptstyle\scriptscriptstyle\else\scriptstyle\fi#5#1#6\m@th$}%
  \sbox2{$\ifx#7\scriptstyle\scriptscriptstyle\else\scriptstyle\fi#5#2#6\m@th$}%
  \sbox4{$#7\dabar@\m@th$}%
  \dimen@=\wd0 %
  \ifdim\wd2 >\dimen@
    \dimen@=\wd2 %
  \fi
  \count@=2 %
  \def\da@bars{\dabar@\dabar@}%
  \@whiledim\count@\wd4<\dimen@\do{%
    \advance\count@\@ne
    \expandafter\def\expandafter\da@bars\expandafter{%
      \da@bars
      \dabar@ 
    }%
  }%
  \mathrel{#3}%
  \mathrel{%
    \mathop{\da@bars}\limits
    \ifx\\#1\\%
    \else
      _{\copy0}%
    \fi
    \ifx\\#2\\%
    \else
      ^{\copy2}%
    \fi
  }%
  \mathrel{#4}%
}
\makeatother

\newtheorem{thm}{Theorem}[section] 
\newtheorem*{thm*}{Preview of Main Theorem}
\newtheorem{prop}[thm]{Proposition}
\newtheorem{lem}[thm]{Lemma} 
\newtheorem{cor}[thm]{Corollary}

\theoremstyle{definition}                 
\newtheorem{defn}[thm]{Definition}

\theoremstyle{remark}                      
\newtheorem{exmp}[thm]{Example}  
\newtheorem{rem}[thm]{Remark}

\numberwithin{equation}{section}

\def\multiset#1#2{\ensuremath{\left(\kern-.3em\left(\genfrac{}{}{0pt}{}{#1}{#2}\right)\kern-.3em\right)}}

\def\multiset#1#2{\ensuremath{\left(\kern-.3em\left(\genfrac{}{}{0pt}{}{#1}{#2}\right)\kern-.3em\right)}}

\usepackage[backend=biber,
style=alphabetic,
maxalphanames=4,
sorting=nyt,
maxnames=99]{biblatex}
\appto{\bibsetup}{\sloppy}

\title{Matchings and Clusters on Plabic Fences}
\author{Jo\~ao Pedro Carvalho}
\address[J. P. Carvalho]{Department of Mathematics, University of Michigan, Ann Arbor, MI 48109}
\email{\textcolor{blue}{\href{mailto:jpcarv@umich.edu}{jpcarv@umich.edu}}}
\author{Yucong Lei}
\address[Y. Lei]{Department of Mathematics, University of Michigan, Ann Arbor, MI 48109}
\email{\textcolor{blue}{\href{mailto:leiyc@umich.edu}{leiyc@umich.edu}}}

\begin{document}
\maketitle
\begin{abstract}
Fix two positive braid words $\beta_+,\beta_-$, and let $\text{Conf}(\beta_+,\beta_-)$ be the corresponding (type A) double Bott-Samelson variety. Let $\beta$ be a double braid word containing $\beta_+,\beta_-$ as the top, bottom words. We consider the open cluster torus $T(C_\beta)$ associated to a triangulation $C_\beta$ in $\text{Conf}(\beta_+,\beta_-)$ from \cite{Shen_Weng_2021}, and we identify these with weighted plabic fences, where the usual local moves on planar bipartite graphs naturally correspond to change of torus coordinates in double Bott-Samelson variety. Moreover, $T(C_\beta)$ can be parametrized explicitly by certain matrix products, and also has monomial coordinates given by the cluster variables, which are matrix minors. We interpret these minors as a generalized notion of perfect matchings on weighted plabic fences, which may not be reduced plabic graphs. Using this interpretation, we show that the cluster variables are given by ``generalized minimal matchings", extending the minimal matchings from ~\cite{Muller_Speyer_2017}. Lastly, we derive a Chamber Ansatz formula for double Bott-Samelson varieties via face alternating products from dimer theory. In general, plabic fences are not reduced plabic graphs, yet we are able to extend and apply the standard tools such as local moves on plabic graphs, trips, and minimal matchings to them.
\end{abstract}

\setcounter{tocdepth}{1}




\color{black}

\section{Introduction}
Plabic graphs are a combinatorial device first introduced by Postnikov~\cite{postnikov_2006} to study total positivity in the Grassmannian. Postnikov defined a \textbf{boundary measurement map}, using dimer partition functions on \textbf{reduced} plabic graphs to parameterize points inside positroid cells. Later, Muller and Speyer~\cite{Muller_Speyer_2017} associated cluster variables of the positroid cell to the faces of the corresponding reduced plabic graph with monomials in edge weights, constructed by \textbf{minimal matchings}. If the plabic graphs are not reduced, Postnikov's parameterization fails to be injective, and the notion of \textbf{downstream wedges} in Muller--Speyer fails to be well-defined. 

On the other hand, Fomin et al.~\cite{Morsifications} introduced a special class of plabic graphs called \textbf{plabic fences} to study positive braids by planar graph combinatorics, which was further used by Galashin--Lam~\cite{galashin_lam_2024} to study plabic links. Plabic fences can be non-reduced plabic graphs, and while the usual plabic theory of move-equivalences and alternating strand diagrams play important roles in the problem of their interest, there has been little study as to what \textbf{edge weights} or \textbf{matchings} on plabic fences should correspond. 

Double Bott-Samelson varieties were first introduced in \cite{Elek_lu}. Given two positive braid words $\beta_+,\beta_-,$ the \textbf{decorated double Bott-Samelson variety} $Conf(\beta_+,\beta_-)$ is a configuration space of flags with conditions dictated by $\beta_+$ and $\beta_-.$ When both $\beta_+$ and $\beta_-$ are lifts of reduced words in $S_n,$ these recover the \textbf{double Bruhat cells} of \cite{DoubleBruhatCells}.Their coordinate ring has a cluster structure described by Shen and Weng \cite{Shen_Weng_2021}. These varieties are a type of braid variety and have connections to Legendrian knot theory (see \cite{Braid_variety_cluster,grid_plabic,augmentations_clusters} and references therein). Their relationship to plabic fences has been explored, but it has not included weights and matchings (see, e.g. \cite{grid_plabic,filling_seed}). 

In this paper, we generalize the minimal matching construction from Muller--Speyer to the setting of plabic fences, which may not be reduced. Our main result is Theorem~\ref{thm:matching_equals_minor}, previewed below.

\begin{thm*}
Given a double word $\beta$ which is an interlacing of $\beta_+$ and $\beta_-$, we can associate a plabic fence $G_\beta$ (see Section~\ref{sec: plabic_graphs}). There is a choice of edge weights for $G_\beta$ that parameterize points in a torus in $\text{Conf}(\beta_+,\beta_-)$ (see Section~\ref{sec: DBS}). Moreover, with respect to this choice of edge weights, our generalized version of ~\cite{Muller_Speyer_2017}'s minimal matchings (see Section~\ref{sec: gen_matching}) compute the generalized minors associated to these points (see Figure~\ref{fig:generalized_downstream_ex} for an illustration of generalized downstream regions).
\end{thm*}

\begin{figure}[h!]
    \centering
    \includestandalone[mode=tex, width=0.7\textwidth]{figures/generalized_downstream_ex}
    \caption{An example of generalized downstream regions with respect to the edge $e$; in the reduced case, downstream faces are assigned the weight $1$, and all other faces have weight $0$; in the non-reduced case, faces can have negative integer weights.}
    \label{fig:generalized_downstream_ex}
    \end{figure}

This provides a graph-theoretical way of computing cluster variables in the coordinate ring of $Conf(\beta_+,\beta_-)$ which generalizes a version of \cite[Theorem 4.11]{DoubleBruhatCells}.

In Section~\ref{sec: cluster_algebra}, we provide the necessary background in Cluster Algebras. In Section~\ref{sec: plabic_graphs}, we provide the background on the plabic theory, including the standard minimal matching construction from \cite{Muller_Speyer_2017}. We also explain some analogous preliminaries for plabic fences. In Section~\ref{sec: DBS}, we give the background on double Bott-Samelson varieties, including the parametrizations from \cite{Shen_Weng_2021} and how they can be adapted into the plabic fence framework. Finally, in Section~\ref{sec: gen_matching} we construct a generalization of Muller-Speyer's minimal matching to plabic fences with a canonical weighting and prove Theorem~\ref{thm:matching_equals_minor}. In Section~\ref{sec: chamber_ansatz}, we derive an inverse formula to express the edge weights in terms of the generalized minors, in the flavor of ~\cite{galashin_lam_2024}.

\section*{Acknowledgements}
 The authors would like to thank David Speyer for his helpful discussions and editing suggestions. The authors also thank Amanda Schwartz for her comments on a previous version of this paper. Y.L. would also like to thank Greg Muller for suggesting possible constructions of generalized minimal matchings on non-reduced graphs. Y.L. was supported by NSF grant DMS-2246570. J.P.C. was partially supported by NSF grant DMS-2348501.
\section{Cluster Algebras}\label{sec: cluster_algebra}

We provide a brief introduction to cluster algebras limited to the scope of this paper. Details can be found in \cite{Fomin_zelevinzky,lam_cohomology_2022}. We start with the data of a \textbf{quiver} $Q$: a directed graph with vertices indexed by some finite set $I$ where multiple edges between vertices are allowed, but not oriented two-cycles or loops. We partition $I=I^{uf}\sqcup I^f$. The vertices indexed by $I^{uf}$ are called mutable, the ones indexed by $I^f$ are frozen. 

\begin{defn}
    Given a quiver $Q$ and a vertex $k\in I^{uf},$ the \textbf{mutation} of $Q$ in the direction $k$ is a new quiver $\mu_k(Q)$ obtained from $Q$ through the following procedure:
    \begin{enumerate}
        \item For each directed pair $i\to k\to j,$ add an edge $i\to j;$
        \item Reverse directions for all edges incident to $k;$
        \item Remove any oriented $2-$cycles that may have appeared through this process.
    \end{enumerate}
\end{defn}

Let $\mathcal{F}$ be a field containing $\mathbb{C}.$ A seed $\Sigma$ is a pair $(Q,\mathbf{x})$ consisting of a quiver $Q$ together with a set of algebraically independent variables in $\mathcal{F}$ also indexed by $I$. These are called cluster variables and are defined to be mutable or frozen matching the vertex with the same index.

\begin{defn}
    Given a seed $\Sigma=(Q,\mathbf{x})$ and $k\in I^{uf},$ we define the mutated seed $\mu_k(\Sigma)=(\mu_k(Q),\mu_k(\mathbf{x})),$ where $\mu_k(x_i)=x_i$ for any $i\neq k\in I$ and 
    \[\mu_k(x_k)\cdot x_k = \prod_{i\to k}x_i + \prod_{k\to i}x_i.\]

    The \textbf{cluster algebra} $\mathcal{A}(\Sigma)$ is defined to be the subalgebra of $\mathcal{F}$ generated by all cluster variables appearing in any seed obtainable from $\Sigma$ by mutation, together with $\{x_i^{-1}\}_{i\in I^f}.$
\end{defn}

For any two seeds obtainable from each other via mutations, the cluster algebras they define are isomorphic. An affine algebraic variety is called a cluster variety if it is $Spec(\mathcal{A}(\Sigma))$ for some seed $\Sigma.$ Decorated double Bott-Samelson varieties are cluster varieties \cite{Shen_Weng_2021}.

\section{Plabic Graphs}\label{sec: plabic_graphs}
We first recall the general theory of plabic graphs, which will serve as a background for understanding our construction on plabic fences later.


\begin{defn}
    A \textbf{plabic graph} $G$ is a planar bipartite graph embedded in a disk, whose internal vertices are colored black and white, and each boundary vertex is adjacent to exactly one white internal vertex. Moreover, the boundary vertices are labeled by $1,2,3,...,n$ in clockwise order. See Figure \ref{fig:plabic_graph_and_trips} for examples. We denote the set of internal vertices as $V(G)$, the set of edges of $G$ as $E(G)$, and the set of faces of $G$ as $F(G)$.
\end{defn}

\begin{figure}[h!]
    \centering
    \includestandalone[mode=tex, width=0.8\textwidth]{figures/trips_on_plabic}
    \caption{The leftmost image is a reduced plabic graph with $n = 4, k=2$; the trip starting from boundary $1$ is shown in blue. The rest of the graphs are non-reduced: they have a closed loop, an essential self-intersection (at the internal edge colored in red), and a bad double crossing (at the leftmost and rightmost vertical edges), respectively.}
    \label{fig:plabic_graph_and_trips}
    \end{figure}
    
Given a plabic graph $G$, one can assign weights to its edges, which gives a function $w: E(G)\rightarrow \mathbb{C}^*$. We say two weights $w,w'$ are \textbf{gauge equivalent}, if there exists a function $\lambda: V(G)\rightarrow \mathbb{C}^*$, such that for any edge $e = (uv)\in E(G)$, we have $w'(e) = \lambda(u)\lambda(v)w(e)$.

\begin{defn}
    A \textbf{partial matching} $M$ of the plabic graph $G$ is a subset of edges, where each internal vertex belongs to exactly one such edge, and each boundary vertex belongs to at most one such edge. Let $k$ be the number of white internal vertices minus the number of black internal vertices. Then, the set of boundary vertices covered by edges in $M$ is a $k$-element subset of $[n]=\{1,\cdots,n\}$, denoted by $\partial M$.
\end{defn}

Given a partial matching $M$ and an edge weight function $w$, define the weight of the partial matching to be $w(M) := \prod_{e \in M} w(e)$.
The $w(M)$ are monomial functions on $(\mathbb{C}^{\ast})^E/(\mathbb{C}^{\ast})^{V-1}$, where $E$ is the size of $E(G)$, and $V$ is the size of $V(G)$, since any gauge transformation changes the $w(M)$ simultaneously by a global constant.
For reduced graphs, \cite{Muller_Speyer_2017} defines a family of \textbf{minimal matchings} such that $\{ w(M) : M \ \text{minimal} \}$ are coordinates  on $(\mathbb{C}^{\ast})^E/(\mathbb{C}^{\ast})^{V-1}$.

Given a plabic graph $G$, one defines a \textbf{boundary measurement map} $\mathbb{D}$ from the torus $(\mathbb{C}^{\ast})^E/(\mathbb{C}^{\ast})^{V-1}$ to the Grassmannian $G(k,n)$, by summing over all partial matchings $M$ with $\partial M = I$ for each $k$-element subset  $I$ \cite{dimers,postnikov_2006,Talaska}. 
If $G$ is \textbf{reduced}, this map is injective.

One also considers the following moves on plabic graphs with weights. The following two moves, along with gauge transformations, preserve the weights of the partial matchings $w(M)$, up to a global constant. 
\begin{itemize}
    \item ($d_1$) when we have a square face, whose vertices have degree at least $3$, and $ac+bd \neq 0$, we can perform an \textbf{urban renewal move} (see Figure~\ref{fig:moves_for_plabic_graphs} below left)
    \item ($d_2$) when we have a vertex of degree $2$ whose neighbors are distinct vertices, we can \textbf{contract} the edges at this degree two vertex (see Figure~\ref{fig:moves_for_plabic_graphs} below right),
\end{itemize}
and their inverse moves. 
\begin{figure}[h!]
    \centering
    \includestandalone[mode=tex, width=1\textwidth]{figures/plabic_moves}
    \caption{Moves for weighted plabic graphs.}
    \label{fig:moves_for_plabic_graphs}
    \end{figure}

We now introduce another useful statistic one tracks on plabic graphs called \textbf{trips}. In \cite{postnikov_2006}, trips are used to find a decorated permutation on plabic graphs, which tells the positroid cell that the boundary measurement maps to. In addition, trips can be used to detect if a plabic graph is reduced.

\begin{defn}
    A \textbf{trip} is an oriented walk along edges of $G$ turning maximally left at white vertices and maximally right on black vertices. Such oriented walks can either be a closed loop, or can extend both ways to the boundary. 
\end{defn}

\cite[Theorem 13.2]{postnikov_2006} shows that a plabic graph $G$ without internal vertices of degree 1 is \textbf{reduced} if and only if:
\begin{enumerate}
    \item[(i)] There are no closed loops;
    \item[(ii)] (\textbf{No essential self-intersections}) No trip traverses the same edge more than once;
    \item[(iii)] (\textbf{No bad double-crossing}) If two trips both traverse edges $e_1, e_2$, then it must be that one trip visits $e_1$ before $e_2$, and the other trip visits $e_2$ before $e_1$.
\end{enumerate}

\begin{figure}[h!]
    \centering
    \includestandalone[mode=tex, width=0.8\textwidth]{figures/downstream_wedges}
    \label{fig:minimal_matchings}
    \caption{The leftmost graph sketches the downstream of an edge in reduced plabic graph, where the trip to the white endpoint is in red, and the trip to the black endpoint is in green. The 2nd and 3rd graph sketches the downstream region of the edge $a$ and $b$, respectively. Note that the square face in the middle is in the downstream wedge of $a$ but not in that of $b$. The rightmost graph is the minimal matching associated to the square face, $M_0 = \{a,c,f_2,h_1\}$, and indeed $a\in M_0, b\notin M_0$.}
    \end{figure}

For a reduced plabic graph, trips can also be used to find minimal matchings. These are partial matchings that are the minimal element in the \textbf{up-flip lattice} of all partial matchings with the same boundary condition.  The monomials $\{ w(M) : M \ \text{minimal} \}$ are twisted \cite{marscoTwist, Muller_Speyer_2017} Pl\"ucker coordinates of $\mathbb{D}(w)$.
Twisted Pl\"ucker coordinates are cluster variables, forming a cluster~\cite{scott_2003, galashin_lam_2024, demazure_weave_plabic}.

Given an edge $e$, there are two trips starting from its middle point, one traveling towards the white endpoint of $e$, and the other traveling towards the black endpoint of $e$. Since $G$ is reduced, these two trips will not meet again at a different edge than $e$, otherwise there is a bad double crossing, and they will travel to the boundary. Thus, the union of these two trips divides the disk into two regions, and the region on the left side of the trip traveling towards the white endpoint is defined to be the \textbf{downstream wedge} of $e$. Muller--Speyer showed that an edge $e$ is in the minimal matching associated to the face $f$ if and only if the face $f$ is in the \textbf{downstream wedge} of the edge $e$.

Notice that this construction relies on the assumption that $G$ is reduced. For example, if $G$ has a bad double crossing, then the two trips starting from the first intersection of the double crossing will cross again before reaching the boundary, hence dividing the disk into more than two regions. In Section 5, we will define generalized minimal matchings for plabic fences, an important family of non-reduced plabic graphs related to Double Bott-Samelson varieties. 
We will show that $\{ w(M) : M \ \text{generalized minimal} \}$ form coordinates on $(\mathbb{C}^{\ast})^E/(\mathbb{C}^{\ast})^{V-1}$ and compute cluster variables in a cluster~\cite{Shen_Weng_2021}.
In forthcoming work, the second author will define generalized minimal matchings for any plabic graph whose trips are not loops and show that they are cluster variables.

We now introduce the analogous theory of plabic fences.
\begin{defn}\label{def: double_word}
    A \textbf{double word} $\beta$ is a sequence of nonzero integers $\beta = (i_1,i_2,...,i_N)$ with $|i_k|\in [r] = \{1,2,...,r\}, \forall k=1,...,N$. Its \textbf{top-word} $\beta^+$ is the subsequence of the positive entries in $\beta$, and its \textbf{bottom-word} is the subsequence of the negative entries in $\beta$. 
\end{defn}
\begin{defn}\label{def: fence_quiver}
    Given a double word $\beta = (i_1,i_2,...,i_N)\in ([r]\coprod(-[r]))^N$, define its \textbf{associated plabic fence} $G_\beta$ as a planar bipartite graph embedded in a disk, whose $E(G_\beta)$ contains exactly:
    \begin{enumerate}
        \item[(i)] horizontal edges lying on $(r+1)$ horizontal lines extending to the boundary of the disk, indexed by $1,2,...,r+1$ from top to bottom, where boundary vertices are all black.
        \item[(ii)] a sequence of vertical column edges $e_j, j=1,...,N$, ordered from left to right, such that if $i_j\in [r]$, then $e_j$ is an edge connecting a white vertex on the $i_j$-th row to a black vertex on the $(i_{j}+1)$-th row, and when $i_j\in (-[r])$, then $e_j$ is an edge connecting a black vertex on the $-i_j$-th row to a white vertex on the $(-i_{j}+1)$-th row. 
    \end{enumerate}
    If a face lies in between the $i$-th and $(i+1)$-th lines, we say it is \textbf{in the $i$-th row}, for $i\in [r]$.
\end{defn}

\begin{exmp}\label{exem: weighted_fence}
    See Figure~\ref{fig:weighted_fence} for an example of a plabic fence where $r = 2$, and $\beta = (-1,1,2,-2,-1,2)$. Edges without labels always have weights $1$, and the letters on edges are placeholders that take values in $\mathbb{C}^*$.
    \begin{figure}[h!]
    \centering
    \tikzset{every picture/.style={line width=0.75pt}} 

\begin{tikzpicture}[x=0.75pt,y=0.75pt,yscale=-1,xscale=1]

\draw    (22.34,88) -- (83,88) -- (133.66,88) -- (216.34,88) -- (348.34,88) ;
\draw    (21.34,130.32) -- (83,130.32) -- (156.34,130.32) -- (199.34,130.32) -- (240.34,130.32) -- (266.34,130.32) -- (349.34,130.32) ;
\draw    (20.34,172.64) -- (156.34,172.64) -- (307.34,172.64) -- (352.22,172.64) ;
\draw  [fill={rgb, 255:red, 255; green, 255; blue, 255 }  ,fill opacity=1 ][line width=1.5]  (77.34,172.64) .. controls (77.34,169.33) and (80.02,166.64) .. (83.34,166.64) .. controls (86.65,166.64) and (89.34,169.33) .. (89.34,172.64) .. controls (89.34,175.95) and (86.65,178.64) .. (83.34,178.64) .. controls (80.02,178.64) and (77.34,175.95) .. (77.34,172.64) -- cycle ;
\draw  [fill={rgb, 255:red, 0; green, 0; blue, 0 }  ,fill opacity=1 ][line width=1.5]  (77,88) .. controls (77,84.69) and (79.69,82) .. (83,82) .. controls (86.31,82) and (89,84.69) .. (89,88) .. controls (89,91.31) and (86.31,94) .. (83,94) .. controls (79.69,94) and (77,91.31) .. (77,88) -- cycle ;
\draw    (83,88) -- (83,130.32) ;
\draw  [fill={rgb, 255:red, 255; green, 255; blue, 255 }  ,fill opacity=1 ][line width=1.5]  (77,130.32) .. controls (77,127.01) and (79.69,124.32) .. (83,124.32) .. controls (86.31,124.32) and (89,127.01) .. (89,130.32) .. controls (89,133.63) and (86.31,136.32) .. (83,136.32) .. controls (79.69,136.32) and (77,133.63) .. (77,130.32) -- cycle ;
\draw  [fill={rgb, 255:red, 0; green, 0; blue, 0 }  ,fill opacity=1 ][line width=1.5]  (127.66,130.32) .. controls (127.66,127.01) and (130.34,124.32) .. (133.66,124.32) .. controls (136.97,124.32) and (139.66,127.01) .. (139.66,130.32) .. controls (139.66,133.63) and (136.97,136.32) .. (133.66,136.32) .. controls (130.34,136.32) and (127.66,133.63) .. (127.66,130.32) -- cycle ;
\draw    (133.66,88) -- (133.66,130.32) ;
\draw  [fill={rgb, 255:red, 255; green, 255; blue, 255 }  ,fill opacity=1 ][line width=1.5]  (127.66,88) .. controls (127.66,84.69) and (130.34,82) .. (133.66,82) .. controls (136.97,82) and (139.66,84.69) .. (139.66,88) .. controls (139.66,91.31) and (136.97,94) .. (133.66,94) .. controls (130.34,94) and (127.66,91.31) .. (127.66,88) -- cycle ;
\draw  [fill={rgb, 255:red, 0; green, 0; blue, 0 }  ,fill opacity=1 ][line width=1.5]  (150.34,172.64) .. controls (150.34,169.33) and (153.02,166.64) .. (156.34,166.64) .. controls (159.65,166.64) and (162.34,169.33) .. (162.34,172.64) .. controls (162.34,175.95) and (159.65,178.64) .. (156.34,178.64) .. controls (153.02,178.64) and (150.34,175.95) .. (150.34,172.64) -- cycle ;
\draw    (156.34,130.32) -- (156.34,172.64) ;
\draw  [fill={rgb, 255:red, 255; green, 255; blue, 255 }  ,fill opacity=1 ][line width=1.5]  (150.34,130.32) .. controls (150.34,127.01) and (153.02,124.32) .. (156.34,124.32) .. controls (159.65,124.32) and (162.34,127.01) .. (162.34,130.32) .. controls (162.34,133.63) and (159.65,136.32) .. (156.34,136.32) .. controls (153.02,136.32) and (150.34,133.63) .. (150.34,130.32) -- cycle ;
\draw  [fill={rgb, 255:red, 0; green, 0; blue, 0 }  ,fill opacity=1 ][line width=1.5]  (193.34,130.32) .. controls (193.34,127.01) and (196.02,124.32) .. (199.34,124.32) .. controls (202.65,124.32) and (205.34,127.01) .. (205.34,130.32) .. controls (205.34,133.63) and (202.65,136.32) .. (199.34,136.32) .. controls (196.02,136.32) and (193.34,133.63) .. (193.34,130.32) -- cycle ;
\draw    (199.34,130.32) -- (199.34,172.64) ;
\draw  [fill={rgb, 255:red, 255; green, 255; blue, 255 }  ,fill opacity=1 ][line width=1.5]  (193.34,172.64) .. controls (193.34,169.33) and (196.02,166.64) .. (199.34,166.64) .. controls (202.65,166.64) and (205.34,169.33) .. (205.34,172.64) .. controls (205.34,175.95) and (202.65,178.64) .. (199.34,178.64) .. controls (196.02,178.64) and (193.34,175.95) .. (193.34,172.64) -- cycle ;
\draw  [fill={rgb, 255:red, 0; green, 0; blue, 0 }  ,fill opacity=1 ][line width=1.5]  (210.34,88) .. controls (210.34,84.69) and (213.02,82) .. (216.34,82) .. controls (219.65,82) and (222.34,84.69) .. (222.34,88) .. controls (222.34,91.31) and (219.65,94) .. (216.34,94) .. controls (213.02,94) and (210.34,91.31) .. (210.34,88) -- cycle ;
\draw    (216.34,88) -- (216.34,130.32) ;
\draw  [fill={rgb, 255:red, 255; green, 255; blue, 255 }  ,fill opacity=1 ][line width=1.5]  (210.34,130.32) .. controls (210.34,127.01) and (213.02,124.32) .. (216.34,124.32) .. controls (219.65,124.32) and (222.34,127.01) .. (222.34,130.32) .. controls (222.34,133.63) and (219.65,136.32) .. (216.34,136.32) .. controls (213.02,136.32) and (210.34,133.63) .. (210.34,130.32) -- cycle ;
\draw  [fill={rgb, 255:red, 0; green, 0; blue, 0 }  ,fill opacity=1 ][line width=1.5]  (260.34,172.64) .. controls (260.34,169.33) and (263.02,166.64) .. (266.34,166.64) .. controls (269.65,166.64) and (272.34,169.33) .. (272.34,172.64) .. controls (272.34,175.95) and (269.65,178.64) .. (266.34,178.64) .. controls (263.02,178.64) and (260.34,175.95) .. (260.34,172.64) -- cycle ;
\draw    (266.34,130.32) -- (266.34,172.64) ;
\draw  [fill={rgb, 255:red, 255; green, 255; blue, 255 }  ,fill opacity=1 ][line width=1.5]  (260.34,130.32) .. controls (260.34,127.01) and (263.02,124.32) .. (266.34,124.32) .. controls (269.65,124.32) and (272.34,127.01) .. (272.34,130.32) .. controls (272.34,133.63) and (269.65,136.32) .. (266.34,136.32) .. controls (263.02,136.32) and (260.34,133.63) .. (260.34,130.32) -- cycle ;
\draw  [fill={rgb, 255:red, 0; green, 0; blue, 0 }  ,fill opacity=1 ][line width=1.5]  (234.34,130.32) .. controls (234.34,127.01) and (237.02,124.32) .. (240.34,124.32) .. controls (243.65,124.32) and (246.34,127.01) .. (246.34,130.32) .. controls (246.34,133.63) and (243.65,136.32) .. (240.34,136.32) .. controls (237.02,136.32) and (234.34,133.63) .. (234.34,130.32) -- cycle ;
\draw  [fill={rgb, 255:red, 255; green, 255; blue, 255 }  ,fill opacity=1 ][line width=1.5]  (49.6,88) .. controls (49.6,84.69) and (52.29,82) .. (55.6,82) .. controls (58.91,82) and (61.6,84.69) .. (61.6,88) .. controls (61.6,91.31) and (58.91,94) .. (55.6,94) .. controls (52.29,94) and (49.6,91.31) .. (49.6,88) -- cycle ;
\draw  [fill={rgb, 255:red, 255; green, 255; blue, 255 }  ,fill opacity=1 ][line width=1.5]  (300.34,88) .. controls (300.34,84.69) and (303.02,82) .. (306.34,82) .. controls (309.65,82) and (312.34,84.69) .. (312.34,88) .. controls (312.34,91.31) and (309.65,94) .. (306.34,94) .. controls (303.02,94) and (300.34,91.31) .. (300.34,88) -- cycle ;
\draw  [fill={rgb, 255:red, 255; green, 255; blue, 255 }  ,fill opacity=1 ][line width=1.5]  (301.34,172.64) .. controls (301.34,169.33) and (304.02,166.64) .. (307.34,166.64) .. controls (310.65,166.64) and (313.34,169.33) .. (313.34,172.64) .. controls (313.34,175.95) and (310.65,178.64) .. (307.34,178.64) .. controls (304.02,178.64) and (301.34,175.95) .. (301.34,172.64) -- cycle ;

\draw (66,99.4) node [anchor=north west][inner sep=0.75pt]    {$\textcolor[rgb]{0.22,0.33,0.88}{q_{1}}$};
\draw (182,142.4) node [anchor=north west][inner sep=0.75pt]    {$\textcolor[rgb]{0.22,0.33,0.88}{q}\textcolor[rgb]{0.22,0.33,0.88}{_{2}}$};
\draw (199,96.4) node [anchor=north west][inner sep=0.75pt]    {$\textcolor[rgb]{0.22,0.33,0.88}{q}\textcolor[rgb]{0.22,0.33,0.88}{_{3}}$};
\draw (116,100.4) node [anchor=north west][inner sep=0.75pt]    {$\textcolor[rgb]{0.22,0.33,0.88}{p_{1}}$};
\draw (140,142.4) node [anchor=north west][inner sep=0.75pt]    {$\textcolor[rgb]{0.22,0.33,0.88}{p_{2}}$};
\draw (247,142.4) node [anchor=north west][inner sep=0.75pt]    {$\textcolor[rgb]{0.22,0.33,0.88}{p}\textcolor[rgb]{0.22,0.33,0.88}{_{3}}$};
\draw (326,64.4) node [anchor=north west][inner sep=0.75pt]    {$\textcolor[rgb]{0.25,0.46,0.02}{d^{-1}_{1}}$};
\draw (306,109.4) node [anchor=north west][inner sep=0.75pt]    {$\textcolor[rgb]{0.25,0.46,0.02}{d^{-1}_2}$};
\draw (326,152.4) node [anchor=north west][inner sep=0.75pt]    {$\textcolor[rgb]{0.25,0.46,0.02}{d^{-1}_3}$};
\draw (12,77.4) node [anchor=north west][inner sep=0.75pt]    {$1$};
\draw (12,120.4) node [anchor=north west][inner sep=0.75pt]    {$2$};
\draw (10,163.4) node [anchor=north west][inner sep=0.75pt]    {$3$};

\end{tikzpicture}
    \caption{A plabic fence with right canonical weights; the variables represent arbitrary values in $\mathbb{C}^*$, and all edges without variables are assigned weight $1$.}
    \label{fig:weighted_fence}
    \end{figure}
\end{exmp}

\begin{rem}\label{rem: right_canonical_weights}
    By gauge transformations, one can always normalize the edge weights of a plabic fence $G_\beta$ to have weight $1$ on all horizontal edges except for the rightmost horizontal edges on each row that is the same format as in Figure~\ref{fig:weighted_fence}. We say such a weighted plabic fence is \textbf{right canonically weighted}.
\end{rem}
We now define a set of moves on right-canonically weighted plabic fences. Note that the moves we define here can be generated by a sequence of moves on general plabic graphs. We only present the composition of moves that take a plabic fence to another plabic fence.

\begin{itemize}
    \item \textbf{Non-consecutive Column Swap:} Exchange the horizontal position of two vertical edges $e_1,e_2$ in-between columns $i,i+1$, $j,j+1$, respectively, such that $|i-j|\geq 2$.
    \item \textbf{Consecutive Column Swap:} Let $e_1,e_2$ be vertical edges in-between columns $i-1,i$, $i,i+1$, respectively, such that one is top-white-bottom-black and the other is top-black-bottom-white. Then we can exchange their horizontal positions. See Figure~\ref{fig:consecutive_col_swap}.
    \begin{figure}[h!]
    \centering
    \tikzset{every picture/.style={line width=0.75pt}} 

\begin{tikzpicture}[x=0.75pt,y=0.75pt,yscale=-1,xscale=1]

\draw    (109,62) -- (147.74,62) -- (270,62) ;
\draw    (109,110) -- (183,110) -- (223,110) -- (270,110) ;
\draw    (109,158.15) -- (223,158.15) -- (270,158.15) ;
\draw  [fill={rgb, 255:red, 0; green, 0; blue, 0 }  ,fill opacity=1 ][line width=1.5]  (142.5,110.15) .. controls (142.5,107.25) and (144.84,104.9) .. (147.74,104.9) .. controls (150.64,104.9) and (152.99,107.25) .. (152.99,110.15) .. controls (152.99,113.05) and (150.64,115.4) .. (147.74,115.4) .. controls (144.84,115.4) and (142.5,113.05) .. (142.5,110.15) -- cycle ;
\draw    (147.74,62) -- (147.74,110.15) ;
\draw    (223,110) -- (223,158.15) ;
\draw  [fill={rgb, 255:red, 255; green, 255; blue, 255 }  ,fill opacity=1 ][line width=1.5]  (142.5,62) .. controls (142.5,59.1) and (144.84,56.75) .. (147.74,56.75) .. controls (150.64,56.75) and (152.99,59.1) .. (152.99,62) .. controls (152.99,64.9) and (150.64,67.25) .. (147.74,67.25) .. controls (144.84,67.25) and (142.5,64.9) .. (142.5,62) -- cycle ;
\draw  [fill={rgb, 255:red, 255; green, 255; blue, 255 }  ,fill opacity=1 ][line width=1.5]  (177.75,110) .. controls (177.75,107.1) and (180.1,104.75) .. (183,104.75) .. controls (185.9,104.75) and (188.25,107.1) .. (188.25,110) .. controls (188.25,112.9) and (185.9,115.25) .. (183,115.25) .. controls (180.1,115.25) and (177.75,112.9) .. (177.75,110) -- cycle ;
\draw  [fill={rgb, 255:red, 255; green, 255; blue, 255 }  ,fill opacity=1 ][line width=1.5]  (217.75,158.15) .. controls (217.75,155.25) and (220.1,152.9) .. (223,152.9) .. controls (225.9,152.9) and (228.25,155.25) .. (228.25,158.15) .. controls (228.25,161.05) and (225.9,163.4) .. (223,163.4) .. controls (220.1,163.4) and (217.75,161.05) .. (217.75,158.15) -- cycle ;
\draw  [fill={rgb, 255:red, 0; green, 0; blue, 0 }  ,fill opacity=1 ][line width=1.5]  (217.75,110) .. controls (217.75,107.1) and (220.1,104.75) .. (223,104.75) .. controls (225.9,104.75) and (228.25,107.1) .. (228.25,110) .. controls (228.25,112.9) and (225.9,115.25) .. (223,115.25) .. controls (220.1,115.25) and (217.75,112.9) .. (217.75,110) -- cycle ;
\draw    (378,62) -- (416.74,62) -- (539,62) ;
\draw    (378,110) -- (452,110) -- (492,110) -- (539,110) ;
\draw    (378,159.15) -- (492,159.15) -- (539,159.15) ;
\draw  [fill={rgb, 255:red, 0; green, 0; blue, 0 }  ,fill opacity=1 ][line width=1.5]  (411.5,110.15) .. controls (411.5,107.25) and (413.84,104.9) .. (416.74,104.9) .. controls (419.64,104.9) and (421.99,107.25) .. (421.99,110.15) .. controls (421.99,113.05) and (419.64,115.4) .. (416.74,115.4) .. controls (413.84,115.4) and (411.5,113.05) .. (411.5,110.15) -- cycle ;
\draw    (492,61.85) -- (492,110) ;
\draw    (416.74,110.15) -- (416.74,158.3) ;
\draw  [fill={rgb, 255:red, 255; green, 255; blue, 255 }  ,fill opacity=1 ][line width=1.5]  (486.75,61.85) .. controls (486.75,58.95) and (489.1,56.6) .. (492,56.6) .. controls (494.9,56.6) and (497.25,58.95) .. (497.25,61.85) .. controls (497.25,64.75) and (494.9,67.1) .. (492,67.1) .. controls (489.1,67.1) and (486.75,64.75) .. (486.75,61.85) -- cycle ;
\draw  [fill={rgb, 255:red, 255; green, 255; blue, 255 }  ,fill opacity=1 ][line width=1.5]  (446.75,110) .. controls (446.75,107.1) and (449.1,104.75) .. (452,104.75) .. controls (454.9,104.75) and (457.25,107.1) .. (457.25,110) .. controls (457.25,112.9) and (454.9,115.25) .. (452,115.25) .. controls (449.1,115.25) and (446.75,112.9) .. (446.75,110) -- cycle ;
\draw  [fill={rgb, 255:red, 255; green, 255; blue, 255 }  ,fill opacity=1 ][line width=1.5]  (411.5,158.3) .. controls (411.5,155.4) and (413.84,153.05) .. (416.74,153.05) .. controls (419.64,153.05) and (421.99,155.4) .. (421.99,158.3) .. controls (421.99,161.19) and (419.64,163.54) .. (416.74,163.54) .. controls (413.84,163.54) and (411.5,161.19) .. (411.5,158.3) -- cycle ;
\draw  [fill={rgb, 255:red, 0; green, 0; blue, 0 }  ,fill opacity=1 ][line width=1.5]  (486.75,110) .. controls (486.75,107.1) and (489.1,104.75) .. (492,104.75) .. controls (494.9,104.75) and (497.25,107.1) .. (497.25,110) .. controls (497.25,112.9) and (494.9,115.25) .. (492,115.25) .. controls (489.1,115.25) and (486.75,112.9) .. (486.75,110) -- cycle ;
\draw    (294,111) -- (328,111) ;
\draw [shift={(330,111)}, rotate = 180] [color={rgb, 255:red, 0; green, 0; blue, 0 }  ][line width=0.75]    (10.93,-3.29) .. controls (6.95,-1.4) and (3.31,-0.3) .. (0,0) .. controls (3.31,0.3) and (6.95,1.4) .. (10.93,3.29)   ;
\draw [shift={(292,111)}, rotate = 0] [color={rgb, 255:red, 0; green, 0; blue, 0 }  ][line width=0.75]    (10.93,-3.29) .. controls (6.95,-1.4) and (3.31,-0.3) .. (0,0) .. controls (3.31,0.3) and (6.95,1.4) .. (10.93,3.29)   ;

\draw (161,90.4) node [anchor=north west][inner sep=0.75pt]    {$\textcolor[rgb]{0.82,0.01,0.11}{1}$};
\draw (199,92.4) node [anchor=north west][inner sep=0.75pt]    {$\textcolor[rgb]{0.82,0.01,0.11}{1}$};
\draw (430,90.4) node [anchor=north west][inner sep=0.75pt]    {$\textcolor[rgb]{0.82,0.01,0.11}{1}$};
\draw (466,91.4) node [anchor=north west][inner sep=0.75pt]    {$\textcolor[rgb]{0.82,0.01,0.11}{1}$};
\draw (74,51.4) node [anchor=north west][inner sep=0.75pt]    {$i-1$};
\draw (73,151.4) node [anchor=north west][inner sep=0.75pt]    {$i+1$};
\draw (86,101.4) node [anchor=north west][inner sep=0.75pt]    {$i$};
\draw (341,49.4) node [anchor=north west][inner sep=0.75pt]    {$i-1$};
\draw (340,149.4) node [anchor=north west][inner sep=0.75pt]    {$i+1$};
\draw (353,99.4) node [anchor=north west][inner sep=0.75pt]    {$i$};

\end{tikzpicture}
    \caption{Contracting and expanding on a degree two vertex allows for a column swap.}
    \label{fig:consecutive_col_swap}
    \end{figure}
    \item \textbf{Same-Row Column Swap}: If there is a square face in-between the $i$th and $(i+1)$th horizontal rows, then we can swap the two vertical edges by an urban renewal followed by gauge transformations to re-normalize to a right-canonical weight. See Figure~\ref{fig:same_row_swap}. Note this also may change the weights of columns adjacent to this row.

    \begin{figure}[h!]
    \centering
    \includestandalone[mode=tex, width=1\textwidth]{figures/swap_col_same_row}
    \caption{Swapping opposite type vertical edges on the same row. This is possible when $p_kq_l+1\neq 0.$ Note the gauge transformations may affect the weights in the columns after the square move in the same and adjacent rows (see Figure~\ref{fig:col_swap_change_of_params}).}
    \label{fig:same_row_swap}
    \end{figure}
    \item \textbf{Braid Move}: We can change a plabic fence $G_\beta$ with $\beta = (...,i,i+1,i,...)$ (or similarly $\beta = (...,-i,-(i+1),-i,...)$) to $G_{\beta'}$ with $\beta' = (...,i+1,i,i+1,...)$ (or respectively, $\beta' = (...,-(i+1),-i,-(i+1))$) by the following local move in Figure~\ref{fig:braid_move}. 
    \begin{figure}[h!]
    \centering
    \tikzset{every picture/.style={line width=0.75pt}} 

\begin{tikzpicture}[x=0.75pt,y=0.75pt,yscale=-1,xscale=1]

\draw    (68,85) -- (94.29,85) -- (206,85) ;
\draw    (94.29,85) -- (94.29,134.67) ;
\draw    (68,134.67) -- (94.29,134.67) -- (170,134.67) -- (207,134.67) ;
\draw    (170,85) -- (170,134.67) ;
\draw    (68,184.33) -- (206,184.33) ;
\draw  [fill={rgb, 255:red, 255; green, 255; blue, 255 }  ,fill opacity=1 ][line width=1.5]  (88.36,85) .. controls (88.36,81.73) and (91.01,79.08) .. (94.29,79.08) .. controls (97.56,79.08) and (100.21,81.73) .. (100.21,85) .. controls (100.21,88.27) and (97.56,90.92) .. (94.29,90.92) .. controls (91.01,90.92) and (88.36,88.27) .. (88.36,85) -- cycle ;
\draw  [fill={rgb, 255:red, 255; green, 255; blue, 255 }  ,fill opacity=1 ][line width=1.5]  (164.08,85) .. controls (164.08,81.73) and (166.73,79.08) .. (170,79.08) .. controls (173.27,79.08) and (175.92,81.73) .. (175.92,85) .. controls (175.92,88.27) and (173.27,90.92) .. (170,90.92) .. controls (166.73,90.92) and (164.08,88.27) .. (164.08,85) -- cycle ;
\draw  [fill={rgb, 255:red, 0; green, 0; blue, 0 }  ,fill opacity=1 ][line width=1.5]  (88.36,134.67) .. controls (88.36,131.4) and (91.01,128.74) .. (94.29,128.74) .. controls (97.56,128.74) and (100.21,131.4) .. (100.21,134.67) .. controls (100.21,137.94) and (97.56,140.59) .. (94.29,140.59) .. controls (91.01,140.59) and (88.36,137.94) .. (88.36,134.67) -- cycle ;
\draw    (132.14,134.67) -- (132.14,184.33) ;
\draw  [fill={rgb, 255:red, 255; green, 255; blue, 255 }  ,fill opacity=1 ][line width=1.5]  (126.22,134.67) .. controls (126.22,131.4) and (128.87,128.74) .. (132.14,128.74) .. controls (135.41,128.74) and (138.07,131.4) .. (138.07,134.67) .. controls (138.07,137.94) and (135.41,140.59) .. (132.14,140.59) .. controls (128.87,140.59) and (126.22,137.94) .. (126.22,134.67) -- cycle ;
\draw  [fill={rgb, 255:red, 0; green, 0; blue, 0 }  ,fill opacity=1 ][line width=1.5]  (164.08,134.67) .. controls (164.08,131.4) and (166.73,128.74) .. (170,128.74) .. controls (173.27,128.74) and (175.92,131.4) .. (175.92,134.67) .. controls (175.92,137.94) and (173.27,140.59) .. (170,140.59) .. controls (166.73,140.59) and (164.08,137.94) .. (164.08,134.67) -- cycle ;
\draw  [fill={rgb, 255:red, 0; green, 0; blue, 0 }  ,fill opacity=1 ][line width=1.5]  (126.22,184.33) .. controls (126.22,181.06) and (128.87,178.41) .. (132.14,178.41) .. controls (135.41,178.41) and (138.07,181.06) .. (138.07,184.33) .. controls (138.07,187.6) and (135.41,190.26) .. (132.14,190.26) .. controls (128.87,190.26) and (126.22,187.6) .. (126.22,184.33) -- cycle ;
\draw    (300.83,85) -- (341.29,85) -- (470.83,85) ;
\draw    (341.29,134.67) -- (341.29,184.33) ;
\draw    (300.83,134.67) -- (341.29,134.67) -- (417,134.67) -- (471.83,134.67) ;
\draw    (417,134.67) -- (417,184.33) ;
\draw    (315,184.33) -- (300.83,184.33) -- (473.83,184.33) ;
\draw  [fill={rgb, 255:red, 255; green, 255; blue, 255 }  ,fill opacity=1 ][line width=1.5]  (335.36,134.67) .. controls (335.36,131.4) and (338.01,128.74) .. (341.29,128.74) .. controls (344.56,128.74) and (347.21,131.4) .. (347.21,134.67) .. controls (347.21,137.94) and (344.56,140.59) .. (341.29,140.59) .. controls (338.01,140.59) and (335.36,137.94) .. (335.36,134.67) -- cycle ;
\draw  [fill={rgb, 255:red, 255; green, 255; blue, 255 }  ,fill opacity=1 ][line width=1.5]  (411.08,134.67) .. controls (411.08,131.4) and (413.73,128.74) .. (417,128.74) .. controls (420.27,128.74) and (422.92,131.4) .. (422.92,134.67) .. controls (422.92,137.94) and (420.27,140.59) .. (417,140.59) .. controls (413.73,140.59) and (411.08,137.94) .. (411.08,134.67) -- cycle ;
\draw  [fill={rgb, 255:red, 0; green, 0; blue, 0 }  ,fill opacity=1 ][line width=1.5]  (335.36,184.33) .. controls (335.36,181.06) and (338.01,178.41) .. (341.29,178.41) .. controls (344.56,178.41) and (347.21,181.06) .. (347.21,184.33) .. controls (347.21,187.6) and (344.56,190.26) .. (341.29,190.26) .. controls (338.01,190.26) and (335.36,187.6) .. (335.36,184.33) -- cycle ;
\draw    (379.14,85) -- (379.14,134.67) ;
\draw  [fill={rgb, 255:red, 255; green, 255; blue, 255 }  ,fill opacity=1 ][line width=1.5]  (373.22,85) .. controls (373.22,81.73) and (375.87,79.08) .. (379.14,79.08) .. controls (382.41,79.08) and (385.07,81.73) .. (385.07,85) .. controls (385.07,88.27) and (382.41,90.92) .. (379.14,90.92) .. controls (375.87,90.92) and (373.22,88.27) .. (373.22,85) -- cycle ;
\draw  [fill={rgb, 255:red, 0; green, 0; blue, 0 }  ,fill opacity=1 ][line width=1.5]  (411.08,184.33) .. controls (411.08,181.06) and (413.73,178.41) .. (417,178.41) .. controls (420.27,178.41) and (422.92,181.06) .. (422.92,184.33) .. controls (422.92,187.6) and (420.27,190.26) .. (417,190.26) .. controls (413.73,190.26) and (411.08,187.6) .. (411.08,184.33) -- cycle ;
\draw  [fill={rgb, 255:red, 0; green, 0; blue, 0 }  ,fill opacity=1 ][line width=1.5]  (373.22,134.67) .. controls (373.22,131.4) and (375.87,128.74) .. (379.14,128.74) .. controls (382.41,128.74) and (385.07,131.4) .. (385.07,134.67) .. controls (385.07,137.94) and (382.41,140.59) .. (379.14,140.59) .. controls (375.87,140.59) and (373.22,137.94) .. (373.22,134.67) -- cycle ;
\draw  [fill={rgb, 255:red, 0; green, 0; blue, 0 }  ,fill opacity=1 ][line width=1.5]  (315.13,134.67) .. controls (315.13,131.4) and (317.79,128.74) .. (321.06,128.74) .. controls (324.33,128.74) and (326.98,131.4) .. (326.98,134.67) .. controls (326.98,137.94) and (324.33,140.59) .. (321.06,140.59) .. controls (317.79,140.59) and (315.13,137.94) .. (315.13,134.67) -- cycle ;
\draw  [fill={rgb, 255:red, 0; green, 0; blue, 0 }  ,fill opacity=1 ][line width=1.5]  (438.49,134.67) .. controls (438.49,131.4) and (441.14,128.74) .. (444.41,128.74) .. controls (447.68,128.74) and (450.34,131.4) .. (450.34,134.67) .. controls (450.34,137.94) and (447.68,140.59) .. (444.41,140.59) .. controls (441.14,140.59) and (438.49,137.94) .. (438.49,134.67) -- cycle ;
\draw [line width=1.5]    (216,135) -- (249,135) ;
\draw [shift={(252,135)}, rotate = 180] [color={rgb, 255:red, 0; green, 0; blue, 0 }  ][line width=1.5]    (14.21,-4.28) .. controls (9.04,-1.82) and (4.3,-0.39) .. (0,0) .. controls (4.3,0.39) and (9.04,1.82) .. (14.21,4.28)   ;
\draw  [fill={rgb, 255:red, 255; green, 255; blue, 255 }  ,fill opacity=1 ][line width=1.5]  (374.48,184.33) .. controls (374.48,181.06) and (377.13,178.41) .. (380.4,178.41) .. controls (383.68,178.41) and (386.33,181.06) .. (386.33,184.33) .. controls (386.33,187.6) and (383.68,190.26) .. (380.4,190.26) .. controls (377.13,190.26) and (374.48,187.6) .. (374.48,184.33) -- cycle ;

\draw (97,99.4) node [anchor=north west][inner sep=0.75pt]    {$\textcolor[rgb]{0.29,0.56,0.89}{p_{k}}$};
\draw (141,152.4) node [anchor=north west][inner sep=0.75pt]    {$\textcolor[rgb]{0.29,0.56,0.89}{p_{k+1}}$};
\draw (175,99.4) node [anchor=north west][inner sep=0.75pt]    {$\textcolor[rgb]{0.29,0.56,0.89}{p_{k+2}}$};
\draw (383,102.32) node [anchor=north west][inner sep=0.75pt]  [font=\footnotesize]  {$\textcolor[rgb]{0.29,0.56,0.89}{p}\textcolor[rgb]{0.29,0.56,0.89}{_{k} +p_{k+2}}$};
\draw (343,148) node [anchor=north west][inner sep=0.75pt]  [font=\footnotesize]  {$\textcolor[rgb]{0.29,0.56,0.89}{\frac{p_{k+1} p_{k+2}}{p_{k} +p_{k+2}}}$};
\draw (422,148) node [anchor=north west][inner sep=0.75pt]  [font=\footnotesize]  {$\textcolor[rgb]{0.29,0.56,0.89}{\frac{p_{k} p_{k+1}}{p_{k} +p_{k+2}}}$};
\draw (47,78.4) node [anchor=north west][inner sep=0.75pt]    {$i$};
\draw (31,126.4) node [anchor=north west][inner sep=0.75pt]    {$i+1$};
\draw (32,173.4) node [anchor=north west][inner sep=0.75pt]    {$i+2$};
\draw (282,75.4) node [anchor=north west][inner sep=0.75pt]    {$i$};
\draw (266,123.4) node [anchor=north west][inner sep=0.75pt]    {$i+1$};
\draw (267,170.4) node [anchor=north west][inner sep=0.75pt]    {$i+2$};

\end{tikzpicture}
    \caption{Braid move for white-over-black columns; the braid move for black-over-white columns is defined similarly}
    \label{fig:braid_move}
    \end{figure}
    
\end{itemize}

\begin{rem}
    One can figure out the new column weights by a combination of urban renewal and gauge transformations. See Figure~\ref{fig:braid_move_1}. 
    \begin{figure}[h!]
    \centering
    \includestandalone[mode=tex, width=1\textwidth]{figures/braid_move_1}
    \caption{Local transformation to generate braid move}
    \label{fig:braid_move_1}
    \end{figure}
\end{rem}

To each plabic fence $G_\beta$, we can associate a quiver $Q_\beta$ in the following way (see \cite[Definition 7.2.1]{fomin2025introductionclusteralgebraschapter}):
\begin{enumerate}
    \item Put a vertex in each unbounded face except for the top and bottom face, and mark them as \textbf{frozen}, and put a vertex in each bounded face and mark them as \textbf{mutable}.
    \item For each edge $e$ of the plabic fence that does not border the top or bottom faces, we draw an arrow connecting $e$'s two neighboring faces so that the white endpoint of $e$ is on the left side of the arrow, and the black endpoint of $e$ is on the right side of the arrow.
    \item We remove any arrows between two frozen faces, and remove any oriented two cycles. 
\end{enumerate}
\begin{rem}
    With respect to the quiver $Q_\beta$, the \textbf{Same-Row Column Swaps} and the \textbf{Braid Moves} correspond to mutations on the quiver, while the other moves preserve the quiver.    
\end{rem}

We label the vertices of the quiver $Q_\beta$ (i.e. the faces of the plabic fence $G_\beta$) in the same way as in ~\cite{Shen_Weng_2021}. We let $\displaystyle\binom{i}{a}$ denote the $a$-th face in the row in between the $i$-th and $(i+1)$-th horizontal lines, counted from left to right, starting with index $0$. 
\begin{exmp}
    See Figure~\ref{fig:fence_quiver_ex} for the associated quiver to the plabic fence in Example ~\ref{exem: weighted_fence}. Frozen vertices are square-boxes and mutable vertices are round-boxes, and the corresponding labels are in green.
    \begin{figure}[h!]
    \centering
    \includestandalone[mode=tex, width=0.6\textwidth]{figures/fence_quiver_example}
    \caption{Associated quiver $Q_\beta$ for $\beta = (-1,1,2,-2,-1,2)$.}
    \label{fig:fence_quiver_ex}
    \end{figure}
\end{exmp}


\section{Double Bott-Samelson Varieties}\label{sec: DBS}

We follow the treatment in \cite{Shen_Weng_2021}, but fix our algebraic group $G = SL_{r+1}(\mathbb{C})$ for some positive integer $r$, with corresponding Weyl group $S_{r+1}$ generated by simple transpositions $s_i.$ We consider $Br_{r+1}^+$, its positive braid monoid, and abuse notation by also denoting its generators with $s_i$. Let $U_+$ denote the subgroup of unipotent upper triangular matrices, and $U_-$ denote the subgroup of unipotent lower triangular matrices. Let $B_+, B_-$ denote the subgroups of upper, lower triangular matrices with determinant $1$, respectively. Let $T$ denote the subgroup of diagonal matrices with determinant $1$. For any $w\in S_{r+1}$, we define two lifts $\overline{w}$ and $\overline{\overline{w}}$ in $G$, where $$\overline{w}(j,i) := \begin{cases}
    (-1)^{\#\{t < i| w(t) > w(i)\}},& j = w(i)\\
    0,&\text{else}\\
\end{cases}\;\;\text{and}\;\;\;\overline{\overline{w}}(j,i):= \begin{cases}
    (-1)^{\#\{t > i| w(t) < w(i)\}},&j = w(i)\\
    0, &\text{else}
\end{cases}.$$

We consider the flag varieties $G/B_+$ and $B_-\backslash G$ and their decorated counterparts $G/U_+$ and $U_-\backslash G.$ An \textbf{undecorated flag} (resp. \textbf{undecorated opposite flag}) is a right coset $xB_+$, for some $x\in G$ (resp. a left coset $B_-y$ for some $y\in G$). A \textbf{decorated flag} (resp. \textbf{decorated opposite flag}) is a right coset $xU_+$, for some $x\in G$ (resp. a left coset $U_-y$ for some $y\in G$). When considering flags as maximal chains of subspaces inside $\mathbb{C}^{r+1},$ one can think of a decoration over a flag as a choice of volume form for each subspace in that flag. Note that we have the left actions of $G$ on flags $(g, xB_+)\mapsto (gx)B_+, (g, B_-y)\mapsto B_-(yg^{-1})$, and analogously for the decorated counterparts. In particular, we can use $yB_-$ to denote the coset $B_-y^{-1}$ (and similarly for $U_-$). Two (undecorated) flags $xB_+$ and $yB_+$ (resp. $B_-x$ and $B_-y$) are in \textbf{relative i-th position} if $x^{-1}y\in B_+\overline{s_i}B_+$ (resp. $xy^{-1}\in B_-\overline{\overline{s_i}}B_-$).

\begin{defn}
We say a decorated flag $yU_+$ is in \textbf{strong relative i-th position} to $xU_+$ if $x^{-1}y\in U_+\overline{s_i}U_+$, denoted as $xU_+\xrightarrow[]{s_i} yU_+$. It is in \textbf{weak relative i-th position} if $x^{-1}y\in B_+\overline{s_i}B_+,$ denoted $xU_+\xdashrightarrow[]{s_i} yU_+$. 

Similarly, a decorated opposite flag $U_-y$ is in \textbf{strong relative i-th position} to $U_-x$ if $xy^{-1}\in U_-\overline{\overline{s_i}}U_-$, denoted as $U_-x\xrightarrow[]{s_i} U_-y$. And in \textbf{weak relative i-th position} if $xy^{-1}\in B_-\overline{\overline{s_i}}B_-$, denoted as $U_-x\xdashrightarrow[]{s_i} U_-y.$
\end{defn}

We set up some matrix notations before we proceed. Let $\phi_i: SL_2\rightarrow SL_{r+1}$ be the inclusion that sends a $2\times 2$ matrix $M$ to a $(r+1)\times (r+1)$ block diagonal matrix, whose $[i,i+1]\times[i,i+1]$ block is $M$, and whose $[1,i-1]\times [1,i-1]$ and $[i+2,r+1]\times [i+2,r+1]$ blocks are identity matrices with corresponding sizes. For $p,q,z\in \mathbb{C}, s\in \mathbb{C}^*$ and $t= diag(t_1,t_2,\cdots,t_{r+1})\in \text{SL}_{r+1}(\mathbb{C})$, we define 
\begin{equation*}
    \begin{split}
        e_i(p):= \phi_i\left(\m{1&p\\0&1}\right)\:\:&\text{and}\:\:e_{-i}(q):= \phi_i\left(\m{1&0\\q&1}\right)\\
        B_i(z):= \phi_i\left(\m{z&-1\\1&0}\right)\:\:&\text{and}\:\:B_{-i}(z):= \phi_i\left(\m{0&1\\-1&z}\right)\\        s^{\alpha_i^\vee}=\phi_i\left(\m{s&0\\0&s^{-1}}\right)\:\:&\text{and}\:\:t^{\alpha_i} = \frac{t_i}{t_{i+1}},\;\;\;\; t^{\omega_i} = t_1t_2\cdots t_i.
    \end{split}
\end{equation*}

\begin{lem}\label{lem: relative_pos_matrix}
    Let $g\in G$.
    \begin{itemize}
        \item If $g\in U_+\overline{s_i}U_+$, we have $gU_+ = B_i(z)U_+$, for some unique $z\in \mathbb{C}$ depending only on $g$;
        \item  Similarly, if $g\in U_-\overline{\overline{s_i}}U_-$, we have $gU_- = B_{-i}(z)U_-$, for some unique $z\in \mathbb{C}$ depending only on $g$.
    \end{itemize}
\end{lem}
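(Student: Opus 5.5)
The plan is to reduce everything to $SL_2$ via $\phi_i$ and then compute directly. Write $g = u\,\overline{s_i}\,u'$ with $u,u'\in U_+$. Then $gU_+ = u\,\overline{s_i}\,U_+$, so only the left factor $u$ matters. The first step is to factor $U_+ = U^{(i)}\cdot \phi_i\!\left(\m{1&*\\0&1}\right)$, where $U^{(i)}$ is the subgroup of $U_+$ whose $(i,i+1)$ entry vanishes. Since $\overline{s_i}$ normalizes the unipotent radical $U^{(i)}$ (it only permutes the positive roots other than $\alpha_i$), we have $\overline{s_i}^{-1}U^{(i)}\overline{s_i}\subset U_+$. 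Hence every element of $U_+\overline{s_i}U_+$ can be written as $e_i(p)\,\overline{s_i}\,u''$ with $u''\in U_+$, and $gU_+ = e_i(p)\overline{s_i}U_+$.

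The second step is to compare with $B_i(z)$. In $SL_2$ we have $\overline{s_i}=\phi_i\!\left(\m{0&-1\\1&0}\right)$, since $\overline{s}(1,2)=(-1)^1=-1$ and $\overline{s}(2,1)=1$. Then
\[
e_i(p)\overline{s_i}=\phi_i\!\left(\m{p&-1\\1&0}\right)=B_i(p),
\]
which gives existence with $z=p$.

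For uniqueness, suppose $B_i(z)U_+=B_i(z')U_+$. Then $B_i(z)^{-1}B_i(z')=\overline{s_i}^{-1}e_i(z'-z)\overline{s_i}=e_{-i}(\pm(z'-z))$ is lower unipotent and also lies in $U_+$. This forces $z=z'$. The value $z$ therefore depends only on the coset $gU_+$, hence only on $g$.

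The opposite case is handled symmetrically by transposition: $U_-$ and $U_+$ swap, and $\overline{\overline{s_i}}=\phi_i\!\left(\m{0&1\\-1&0}\right)$, since $\overline{\overline{s}}(2,1)=(-1)^1$. Writing $g=v\,\overline{\overline{s_i}}\,v'$, the same normalizer argument reduces $v$ to $e_{-i}(q)$, and $e_{-i}(q)\overline{\overline{s_i}}=\phi_i\!\left(\m{0&1\\-1&q}\right)=B_{-i}(q)$. Uniqueness follows exactly as before.

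The only nontrivial step is the normalizer claim $\overline{s_i}^{-1}U^{(i)}\overline{s_i}\subseteq U_+$. I would verify it by noting that conjugation by the signed permutation $\overline{s_i}$ sends the root subgroup $U_\alpha$ to $U_{s_i\alpha}$, and $s_i$ permutes $\Phi^+\setminus\{\alpha_i\}$. Sign bookkeeping is the main place where errors could creep in, but the signs do not affect containment in $U_+$.
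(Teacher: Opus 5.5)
Your proof is correct, but it takes a different route from the paper's.

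\textbf{The paper's route.} The paper works with row and column operations in $SL_{r+1}$. It takes $U_2=\mathrm{Id}$ and notes that $g=U_1\overline{s_i}$ vanishes below the pivot positions $(s_i(k),k)$. It then clears everything to the right of each pivot by right multiplication with an element of $U_+$, landing on $B_i(U_1(i,i+1))$. For uniqueness it observes that $g(i,k)=0$ for $k<i$, so right multiplication by $U_+$ cannot change the $(i,i)$ entry, and that entry equals $z$ for $B_i(z)$.

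\textbf{Your route.} You split the left factor into $e_i(p)$ times an element of $U^{(i)}$. You push the $U^{(i)}$ part through $\overline{s_i}$ into the right-hand $U_+$, reducing to the $2\times 2$ identity $e_i(p)\overline{s_i}=B_i(p)$. Uniqueness comes from $B_i(z)^{-1}B_i(z')\in U_+\cap U_-=\{1\}$.

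Two small points are worth making explicit.
\begin{enumerate}
\item You need the $e_i(p)$ factor on the left, i.e.\ $u=e_i(p)v$ with $v\in U^{(i)}$. This is harmless: $U^{(i)}$ is the kernel of the homomorphism $u\mapsto u(i,i+1)$ on $U_+$, hence normal, so the order of the two factors does not matter.
\item The normalizer claim can be checked in one line without root-subgroup language. Conjugating by $\overline{s_i}$ permutes rows and columns $i,i+1$ up to sign. The only new subdiagonal position, $(i+1,i)$, receives $\pm$ the $(i,i+1)$ entry, which is zero.
\end{enumerate}
Your signs for $\overline{s_i}=\phi_i\m{0&-1\\1&0}$ and $\overline{\overline{s_i}}=\phi_i\m{0&1\\-1&0}$ are right and match $B_{\pm i}$ exactly.

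\textbf{What each approach buys.} Your argument is the standard fact $U_+\overline{s_i}U_+=U_{\alpha_i}\overline{s_i}U_+$ with a unique $U_{\alpha_i}$-component. It is cleaner, works verbatim for any split reductive group, and handles the second bullet explicitly rather than by analogy. The paper's computation is more elementary. It also identifies the parameter intrinsically: $z$ is the $(i,i)$ entry, and equally the leading $i\times i$ minor, of every representative of the coset $gU_+$.
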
 
\begin{proof}
    We only prove the first statement. The proof for the second one is analogous.
    
    Let $g= U_1\overline{s_i}U_2$, where $U_1, U_2\in U_+$. Since it suffices to show $g$ is equal to $B_i(z)$ up to right-multiplication by $U_+$, we can choose $g$ such that $U_2 = \text{Id}_{r+1}$. Now, since $U_1$ corresponds to a sequence of bottom-to-top row operations, we know 
   $$\begin{cases}
       g(j,k) = 0, \:\:\forall j > s_i(k)\\
       g(i,i) = U_1(i,i+1)
   \end{cases}$$
   
   On the other hand, we have $$g(s_i(k),k)=\begin{cases}
       -1,\:\:\text{if}\:\: k=i\\
       1,\:\:\text{else}
   \end{cases}$$
   Thus there are left-to-right column operations $U'\in U_+$, where we zero-out the entries to the right of each $g(s_i(k),k)$, that is, $gU' = B_i(U_1(i,i+1))$. Lastly, note that this choice $z:= U_1(i,i+1)$ is unique, since any right action of $U_+$ on $g$ does not change the entry $g(i,i)$, since $g(i,k) = 0,\forall k < i$.
\end{proof}

Being in relative position $s_i$ means the two flags agree for every subspace except the $ith$ one. Given a reduced word $w=s_{i_1}\cdots s_{i_k}\in S_{r+1},$ we can extend this definition and talk about two flags being in relative position $w$. A particularly important case is when the two flags are in relative position $w_0,$ where this is some reduced word for the longest permutation in $S_{r+1}.$ This happens when the flags are of the form $xU_+, U_-y$ with $yx\in B_-B_+$, and similarly for the undecorated versions. We then say the flags are in \textbf{general position}.

Given decorated flags $xU_+, U_-y,$ we call the  upper left $i\times i$ minor of $yx$ their $i$th \textbf{generalized minor}, denoted $\Delta_i(yx)$. Two flags are in general position if and only if all generalized minors don't vanish.

\begin{lem}\label{lem:normalization_on_diagonal}
    Suppose $A^0\in G/U_+,A_0\in U_-\backslash G$ are in general position. Then, there exists $g\in G, t_0,t'_0\in T$, such that $A^0 = gt_0U_+, A_0 =g(t_0')^{-1}U_-$, and $t'_0t_0$ is uniquely determined by $A^0, A_0$.
\end{lem}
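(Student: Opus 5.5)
We will derive the statement from the Gauss ($LDU$) decomposition. Write $A^0 = xU_+$ and $A_0 = U_-y$ with $x,y\in G$. In the convention of the paper, $A_0$ is the coset $y^{-1}U_-$. General position means $yx\in B_-B_+$, or equivalently, that all generalized minors $\Delta_i(yx)$ are nonzero. Since every leading principal minor is nonzero, $yx$ has a unique factorization
\[
yx = u_-\,h\,u_+,\qquad u_-\in U_-,\ h\in T,\ u_+\in U_+ ,
\]
where the diagonal entries of $h$ are the ratios $\Delta_i(yx)/\Delta_{i-1}(yx)$.

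\textbf{Existence.} Take $g := xu_+^{-1}h^{-1}$, $t_0 := h$ and $t_0' := \mathrm{Id}$. Then $gt_0U_+ = xu_+^{-1}U_+ = xU_+ = A^0$. For the opposite flag we compute
\[
g(t_0')^{-1}U_- = xu_+^{-1}h^{-1}U_- = y^{-1}u_-U_- = y^{-1}U_- ,
\]
because $xu_+^{-1}h^{-1} = y^{-1}u_-$ by the factorization. In the paper's notation this coset is $U_-y = A_0$. So the required $g,t_0,t_0'$ exist, and there is no real obstacle here.

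\textbf{Uniqueness of $t_0't_0$.} Suppose $A^0 = gt_0U_+$ and $A_0 = g(t_0')^{-1}U_-$. Then $x = gt_0u_1$ for some $u_1\in U_+$. The opposite flag $g(t_0')^{-1}U_-$ corresponds to $U_-t_0'g^{-1}$, so $y = u_2t_0'g^{-1}$ for some $u_2\in U_-$. Multiplying gives
\[
yx = u_2\,t_0'\,g^{-1}g\,t_0\,u_1 = u_2\,(t_0't_0)\,u_1 .
\]
By uniqueness of the Gauss decomposition, $t_0't_0 = h$.

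The representatives $x,y$ are only defined up to $x\mapsto xu$ with $u\in U_+$ and $y\mapsto u'y$ with $u'\in U_-$. This changes $yx$ to $u'(yx)u$, which leaves the middle factor $h$ unchanged. Hence $t_0't_0 = h$ depends only on $A^0$ and $A_0$.

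The only step needing care is matching the paper's convention $yB_- \leftrightarrow B_-y^{-1}$, so that $A_0 = g(t_0')^{-1}U_-$ really corresponds to $y = u_2t_0'g^{-1}$. Everything else follows from uniqueness of the $U_-TU_+$ factorization.
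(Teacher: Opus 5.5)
Your proof is correct and follows the paper's own argument: both rest on the Gauss decomposition of $yx$ (the paper writes it for $y^{-1}x$ under the opposite coset convention). The differences are cosmetic: you put the diagonal factor into $t_0$ with $t_0'=\mathrm{Id}$, whereas the paper takes $t_0=\mathrm{Id}$ and $t_0'=t$, and the two choices differ by replacing $g$ with $gt^{-1}$. You also spell out the uniqueness of $t_0't_0$ via uniqueness of the $U_-TU_+$ factorization, which the paper's proof leaves implicit.
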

\begin{proof}
    Let $A^0 = xU_+, A_0 = yU_- = U_-y^{-1}$. Then, $\Delta_i(y^{-1}x)\neq 0,\forall i\in [r+1]$. Let $$t:= \text{diag}\{\Delta_1(y^{-1}x),\frac{\Delta_2(y^{-1}x)}{\Delta_1(y^{-1}x)},...,\frac{\Delta_{r+1}(y^{-1}x)}{\Delta_{r}(y^{-1}x)}\}$$ Then, there exists $U_1\in U_-, U_2\in U_+$, such that $t = U_1(y^{-1}x)U_2$. Let $g = xU_2$, then $U_1y^{-1} = tg^{-1}$. Thus, $A^0 = xU_+ = gU_+, A_0 = yU_- = (yU^{-1}_1)U_- = gt^{-1}U_-$. 
\end{proof}


\begin{defn}
Let $\underline{b_+} = (s_{i_1},s_{i_2},...,s_{i_m})$ and $\underline{b_-} = (s_{j_1},s_{j_2},...,s_{j_n})$ be two braid words in $Br_{r+1}^+$. The \textbf{decorated double Bott-Samelson variety} $\text{Conf}({\underline{b_+}},{\underline{b_-}})$, is the space of a sequence of decorated flags $\{A^i\}$ and a sequence of opposite decorated flags $\{A_i\}$ satisfying horizontal relative positions and vertical general positions indicated by Figure \ref{fig:dbs_config}, modulo left action of $G$ simultaneously on all flags.

\begin{figure}[h!]
    \centering
    \tikzset{every picture/.style={line width=0.75pt}} 

\begin{tikzpicture}[x=0.75pt,y=0.75pt,yscale=-1,xscale=1]

\draw    (54,110) -- (117,110) ;
\draw [shift={(119,110)}, rotate = 180] [color={rgb, 255:red, 0; green, 0; blue, 0 }  ][line width=0.75]    (10.93,-3.29) .. controls (6.95,-1.4) and (3.31,-0.3) .. (0,0) .. controls (3.31,0.3) and (6.95,1.4) .. (10.93,3.29)   ;
\draw    (150,110) -- (213,110) ;
\draw [shift={(215,110)}, rotate = 180] [color={rgb, 255:red, 0; green, 0; blue, 0 }  ][line width=0.75]    (10.93,-3.29) .. controls (6.95,-1.4) and (3.31,-0.3) .. (0,0) .. controls (3.31,0.3) and (6.95,1.4) .. (10.93,3.29)   ;
\draw    (244,111) -- (307,111) ;
\draw [shift={(309,111)}, rotate = 180] [color={rgb, 255:red, 0; green, 0; blue, 0 }  ][line width=0.75]    (10.93,-3.29) .. controls (6.95,-1.4) and (3.31,-0.3) .. (0,0) .. controls (3.31,0.3) and (6.95,1.4) .. (10.93,3.29)   ;
\draw    (344,112) -- (407,112) ;
\draw [shift={(409,112)}, rotate = 180] [color={rgb, 255:red, 0; green, 0; blue, 0 }  ][line width=0.75]    (10.93,-3.29) .. controls (6.95,-1.4) and (3.31,-0.3) .. (0,0) .. controls (3.31,0.3) and (6.95,1.4) .. (10.93,3.29)   ;
\draw    (54,168) -- (117,168) ;
\draw [shift={(119,168)}, rotate = 180] [color={rgb, 255:red, 0; green, 0; blue, 0 }  ][line width=0.75]    (10.93,-3.29) .. controls (6.95,-1.4) and (3.31,-0.3) .. (0,0) .. controls (3.31,0.3) and (6.95,1.4) .. (10.93,3.29)   ;
\draw    (150,168) -- (213,168) ;
\draw [shift={(215,168)}, rotate = 180] [color={rgb, 255:red, 0; green, 0; blue, 0 }  ][line width=0.75]    (10.93,-3.29) .. controls (6.95,-1.4) and (3.31,-0.3) .. (0,0) .. controls (3.31,0.3) and (6.95,1.4) .. (10.93,3.29)   ;
\draw    (244,169) -- (307,169) ;
\draw [shift={(309,169)}, rotate = 180] [color={rgb, 255:red, 0; green, 0; blue, 0 }  ][line width=0.75]    (10.93,-3.29) .. controls (6.95,-1.4) and (3.31,-0.3) .. (0,0) .. controls (3.31,0.3) and (6.95,1.4) .. (10.93,3.29)   ;
\draw    (344,170) -- (407,170) ;
\draw [shift={(409,170)}, rotate = 180] [color={rgb, 255:red, 0; green, 0; blue, 0 }  ][line width=0.75]    (10.93,-3.29) .. controls (6.95,-1.4) and (3.31,-0.3) .. (0,0) .. controls (3.31,0.3) and (6.95,1.4) .. (10.93,3.29)   ;
\draw    (36.5,118) -- (36.5,154.77) ;
\draw    (419.5,121) -- (419.5,157.77) ;

\draw (29,98.4) node [anchor=north west][inner sep=0.75pt]    {$A^{0}$};
\draw (75,86.4) node [anchor=north west][inner sep=0.75pt]    {$s_{i_{1}}$};
\draw (125,99.4) node [anchor=north west][inner sep=0.75pt]    {$A^{1}$};
\draw (171,86.4) node [anchor=north west][inner sep=0.75pt]    {$s_{i_{2}}$};
\draw (219,99.4) node [anchor=north west][inner sep=0.75pt]    {$A^{2}$};
\draw (265,87.4) node [anchor=north west][inner sep=0.75pt]    {$s_{i_{2}}$};
\draw (318,110) node [anchor=north west][inner sep=0.75pt]    {$...$};
\draw (365,88.4) node [anchor=north west][inner sep=0.75pt]    {$s_{i_{m}}$};
\draw (412,101.4) node [anchor=north west][inner sep=0.75pt]    {$A^{m}$};
\draw (29,156.4) node [anchor=north west][inner sep=0.75pt]    {$A_{0}$};
\draw (75,144.4) node [anchor=north west][inner sep=0.75pt]    {$s_{j_{1}}$};
\draw (125,157.4) node [anchor=north west][inner sep=0.75pt]    {$A_{1}$};
\draw (171,144.4) node [anchor=north west][inner sep=0.75pt]    {$s_{j_{2}}$};
\draw (219,157.4) node [anchor=north west][inner sep=0.75pt]    {$A_{2}$};
\draw (265,145.4) node [anchor=north west][inner sep=0.75pt]    {$s_{j_{2}}$};
\draw (318,168) node [anchor=north west][inner sep=0.75pt]    {$...$};
\draw (365,146.4) node [anchor=north west][inner sep=0.75pt]    {$s_{j_{n}}$};
\draw (412,159.4) node [anchor=north west][inner sep=0.75pt]    {$A_{n}$};

\end{tikzpicture}
    \caption{Configuration of relative positions in $Conf({\underline{b_+}},{\underline{b_-}})$.}
    \label{fig:dbs_config}
\end{figure}
\end{defn}

Under the left action, we can choose a standard representative of a given $G-$orbit in this space as the unique one such that $A_0=U_-$ and $A^0=tU_+$ for some $t\in T.$ This is possible since $A^0$ and $A_0$ are in general position.

Given two words $\underline{b^1_+}, \underline{b^2_+}$ for a positive braid $\beta_+$ (respectively, two words $\underline{b^1_-}$, $\underline{b^2_-}$ for the braid $\beta_-$), they are related by a sequence of braid moves, and \cite[Theorem 2.18]{Shen_Weng_2021} shows that the corresponding double Bott-Samelson varieties $\text{Conf}({\underline{b^1_+}},{\underline{b^1_-}})\sim \text{Conf}({\underline{b^2_+}},{\underline{b^2_-}})$ are related by a canonical isomorphism. Using identifications under these isomorphisms, we also refer to $\text{Conf}({\beta_+},{\beta_-})$ as a decorated double Bott-Samelson variety.

\subsection{Algebraic Tori in Double Bott-Samelson varieties}
To construct the cluster structure on $\mathbb{C}[\text{Conf}(\beta_+,\beta_-)],$ Shen--Weng use triangulations. A \textbf{triangulation} of the configuration diagram of a decorated double Bott-Samelson variety is a maximal collection of noncrossing diagonals, where each diagonal connects a top row flag and a bottom row flag, as can be seen in Figure~\ref{fig:triang_example}. We call a triangle in a triangulation \textbf{upward-facing} (resp. \textbf{downward-facing}) if the arrow indicating the strong relative position appears on the top (resp. bottom) row of the diagram.

\begin{figure}[h!]
    \centering
    \tikzset{every picture/.style={line width=0.75pt}} 

\begin{tikzpicture}[x=0.75pt,y=0.75pt,yscale=-1,xscale=1]

\draw    (80,83) -- (114.53,83) ;
\draw [shift={(116.53,83)}, rotate = 180] [color={rgb, 255:red, 0; green, 0; blue, 0 }  ][line width=0.75]    (10.93,-3.29) .. controls (6.95,-1.4) and (3.31,-0.3) .. (0,0) .. controls (3.31,0.3) and (6.95,1.4) .. (10.93,3.29)   ;
\draw    (144,84) -- (178.53,84) ;
\draw [shift={(180.53,84)}, rotate = 180] [color={rgb, 255:red, 0; green, 0; blue, 0 }  ][line width=0.75]    (10.93,-3.29) .. controls (6.95,-1.4) and (3.31,-0.3) .. (0,0) .. controls (3.31,0.3) and (6.95,1.4) .. (10.93,3.29)   ;
\draw    (211,84) -- (245.53,84) ;
\draw [shift={(247.53,84)}, rotate = 180] [color={rgb, 255:red, 0; green, 0; blue, 0 }  ][line width=0.75]    (10.93,-3.29) .. controls (6.95,-1.4) and (3.31,-0.3) .. (0,0) .. controls (3.31,0.3) and (6.95,1.4) .. (10.93,3.29)   ;
\draw    (80,156) -- (114.53,156) ;
\draw [shift={(116.53,156)}, rotate = 180] [color={rgb, 255:red, 0; green, 0; blue, 0 }  ][line width=0.75]    (10.93,-3.29) .. controls (6.95,-1.4) and (3.31,-0.3) .. (0,0) .. controls (3.31,0.3) and (6.95,1.4) .. (10.93,3.29)   ;
\draw    (144,157) -- (178.53,157) ;
\draw [shift={(180.53,157)}, rotate = 180] [color={rgb, 255:red, 0; green, 0; blue, 0 }  ][line width=0.75]    (10.93,-3.29) .. controls (6.95,-1.4) and (3.31,-0.3) .. (0,0) .. controls (3.31,0.3) and (6.95,1.4) .. (10.93,3.29)   ;
\draw    (211,157) -- (245.53,157) ;
\draw [shift={(247.53,157)}, rotate = 180] [color={rgb, 255:red, 0; green, 0; blue, 0 }  ][line width=0.75]    (10.93,-3.29) .. controls (6.95,-1.4) and (3.31,-0.3) .. (0,0) .. controls (3.31,0.3) and (6.95,1.4) .. (10.93,3.29)   ;
\draw    (66,91) -- (66,138.43) ;
\draw    (261,93) -- (261,140.43) ;
\draw    (82,94) -- (118,144.26) ;
\draw    (146,146) -- (183.76,95.89) ;
\draw    (131,91) -- (131,138.43) ;
\draw    (196,96) -- (196,143.43) ;
\draw    (209,96) -- (245,146.26) ;

\draw (57,69.4) node [anchor=north west][inner sep=0.75pt]    {$A^{0}$};
\draw (122,70.4) node [anchor=north west][inner sep=0.75pt]    {$A^{1}$};
\draw (187,71.4) node [anchor=north west][inner sep=0.75pt]    {$A^{2}$};
\draw (253,71.4) node [anchor=north west][inner sep=0.75pt]    {$A^{3}$};
\draw (88,62.4) node [anchor=north west][inner sep=0.75pt]    {$s_{1}$};
\draw (153,64.4) node [anchor=north west][inner sep=0.75pt]    {$s_{2}$};
\draw (218,63.4) node [anchor=north west][inner sep=0.75pt]    {$s_{2}$};
\draw (57,142.4) node [anchor=north west][inner sep=0.75pt]    {$A_{0}$};
\draw (122,142.53) node [anchor=north west][inner sep=0.75pt]    {$A_{1}$};
\draw (187,144.4) node [anchor=north west][inner sep=0.75pt]    {$A_{2}$};
\draw (253,144.4) node [anchor=north west][inner sep=0.75pt]    {$A_{3}$};
\draw (89,158.4) node [anchor=north west][inner sep=0.75pt]    {$s_{1}$};
\draw (153,158.4) node [anchor=north west][inner sep=0.75pt]    {$s_{2}$};
\draw (218,158.4) node [anchor=north west][inner sep=0.75pt]    {$s_{1}$};

\end{tikzpicture}
    \caption{A triangulation for $r=2$, $b_+ = (s_1,s_2,s_2)$, and $b_- = (s_1,s_2,s_1)$.}
    \label{fig:triang_example}
\end{figure}
Given a double word $\beta$ (See Definition~\ref{def: double_word}), a left-to-right scanning of $\beta$ gives us two positive braid words $\beta_+, \beta_-$ according to its top and bottom words. We can build a triangulation $C_\beta$ in the double Bott-Samelson variety $\text{Conf}({\beta_+},{\beta_-})$ in the following way. Scan $\beta$ from left to right. For each positive entry $i$ of $\beta$, we append to the right an upward-facing triangle with $s_i$ on its top edge, and for each negative entry $-i$, we append to the right a downward-facing triangle with $s_i$ on its bottom edge. This defines a unique way to connect diagonals of the configuration diagram of $\text{Conf}({\beta_+},{\beta_-})$.

A triangulation $C$ cuts out a subset $T(C)$ of $\text{Conf}({\beta_+},{\beta_-})$ where we require that any two flags connected by a diagonal are in general position. ~\cite{Shen_Weng_2021} showed this is an open algebraic torus inside $\text{Conf}({\beta_+},{\beta_-})$.

\begin{lem}\label{lem:forward_inductive_param}
    Consider one single triangle. See Figure~\ref{fig:single_triangle_param}. Once we fix the choice of $g\in G, t_0,t_0'\in T$ for the first diagonal of flags $A^0,A_0$ as in Lemma~\ref{lem:normalization_on_diagonal}. Then, in the left diagram, there is a unique $p\in \mathbb{C}^*$, such that $A^1 = ge_{-i}(p)t_1U_+$, where $t_1 = t_0(t_0^{\alpha_i}p)^{-\alpha^{\vee}_i}$. Similarly, in the right diagram, there is a unique $q\in \mathbb{C}^*$, such that $A_1 = ge_i(q)(t_1')^{-1}U_-$, where $t'_1= t'_0((t'_0)^{\alpha_i}q)^{-\alpha^{\vee}_i}$.
    \begin{figure}[h!]
    \centering
    \tikzset{every picture/.style={line width=0.75pt}} 

\begin{tikzpicture}[x=0.75pt,y=0.75pt,yscale=-1,xscale=1]

\draw    (87,117) -- (108.94,181.28) ;
\draw    (170,123) -- (148.94,178.28) ;
\draw    (115,107) -- (157.91,107) ;
\draw [shift={(159.91,107)}, rotate = 180] [color={rgb, 255:red, 0; green, 0; blue, 0 }  ][line width=0.75]    (10.93,-3.29) .. controls (6.95,-1.4) and (3.31,-0.3) .. (0,0) .. controls (3.31,0.3) and (6.95,1.4) .. (10.93,3.29)   ;
\draw    (424,120) -- (445.94,184.28) ;
\draw    (372,121) -- (350.94,176.28) ;
\draw    (395,193) -- (436.91,193) ;
\draw [shift={(438.91,193)}, rotate = 180] [color={rgb, 255:red, 0; green, 0; blue, 0 }  ][line width=0.75]    (10.93,-3.29) .. controls (6.95,-1.4) and (3.31,-0.3) .. (0,0) .. controls (3.31,0.3) and (6.95,1.4) .. (10.93,3.29)   ;

\draw (35,93.4) node [anchor=north west][inner sep=0.75pt]    {$A^{0} =gt_{0} U_{+}$};
\draw (77,179.4) node [anchor=north west][inner sep=0.75pt]    {$A_{0} =g( t'_{0})^{-1} U_{-}$};
\draw (168,94.4) node [anchor=north west][inner sep=0.75pt]    {$A^{1}$};
\draw (128,90) node [anchor=north west][inner sep=0.75pt]    {$s_{i}$};
\draw (360,96.4) node [anchor=north west][inner sep=0.75pt]    {$A^{0} =gt_{0} U_{+}$};
\draw (291,181.4) node [anchor=north west][inner sep=0.75pt]    {$A_{0} =g( t'_{0})^{-1} U_{-}$};
\draw (443,183.4) node [anchor=north west][inner sep=0.75pt]    {$A_{1}$};
\draw (407,175) node [anchor=north west][inner sep=0.75pt]    {$s_{i}$};

\end{tikzpicture}
    \caption{Parameterization of a single triangle.}
    \label{fig:single_triangle_param}
\end{figure}
\end{lem}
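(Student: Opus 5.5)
The plan is to reduce everything to the standard frame using the left $G$-action, then read off $p$ from the explicit normal form in Lemma~\ref{lem: relative_pos_matrix}. Since the relations "strong relative $i$-th position" and "general position" are invariant under the simultaneous left action of $G$, I would replace every flag by $g^{-1}$ times it. Then $A^0=t_0U_+$, $A_0=(t_0')^{-1}U_-$, and it suffices to show that $A^1=e_{-i}(p)t_1U_+$ for a unique $p\in\mathbb{C}^*$.

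\textbf{Step 1 (normal form).} Write $A^1=yU_+$. Strong relative position $t_0U_+\xrightarrow{s_i}yU_+$ means $t_0^{-1}y\in U_+\overline{s_i}U_+$. By Lemma~\ref{lem: relative_pos_matrix} this gives $A^1=t_0B_i(z)U_+$ for a unique $z\in\mathbb{C}$.

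\textbf{Step 2 (general position forces $z\neq 0$).} The diagonal $A^1$--$A_0$ of the triangle requires these two flags to be in general position. This means the generalized minors of $t_0'\,t_0B_i(z)$ are all nonzero. Because $t_0't_0$ is diagonal, these minors are a nonzero constant times the leading principal minors of $B_i(z)$. The block structure of $B_i(z)$ shows that all leading principal minors equal $1$, except the $i$-th, which equals $z$. Hence general position is equivalent to $z\neq 0$.

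\textbf{Step 3 (Gauss decomposition).} For $z\neq0$, the $SL_2$ identity
\[
\m{z&-1\\1&0}=\m{1&0\\z^{-1}&1}\m{z&0\\0&z^{-1}}\m{1&-z^{-1}\\0&1}
\]
gives $B_i(z)U_+=e_{-i}(z^{-1})\,z^{\alpha_i^\vee}U_+$. Next I would move $t_0$ past the lower unipotent using $t\,e_{-i}(c)\,t^{-1}=e_{-i}(t^{-\alpha_i}c)$, since conjugation scales the $(i+1,i)$ entry by $t_{i+1}/t_i$. This yields
\[
A^1=e_{-i}(p)\,t_0z^{\alpha_i^\vee}U_+,\qquad p=\frac{t_0^{-\alpha_i}}{z}.
\]
Equivalently $z=(t_0^{\alpha_i}p)^{-1}$, so $t_0z^{\alpha_i^\vee}=t_0(t_0^{\alpha_i}p)^{-\alpha_i^\vee}=t_1$, exactly as claimed.

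\textbf{Step 4 (uniqueness).} $z$ is unique by Lemma~\ref{lem: relative_pos_matrix}, and $z\mapsto p$ is a bijection of $\mathbb{C}^*$, so $p$ is unique once $g,t_0$ are fixed. Alternatively, if $e_{-i}(p)t_1U_+=e_{-i}(p')t_1'U_+$, then $t_1^{-1}e_{-i}(p'-p)t_1'$ lies in $U_+$, which forces $p=p'$.

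The right diagram is handled by the same argument with opposite flags. Its steps are:
\begin{itemize}
\item translate to $A_0=(t_0')^{-1}U_-$;
\item use the second part of Lemma~\ref{lem: relative_pos_matrix} to write $A_1=(t_0')^{-1}B_{-i}(z)U_-$;
\item check via general position with $A^0$ that $z\neq 0$;
\item factor $B_{-i}(z)$ as an upper unipotent $e_i(\cdot)$ times a torus element times an element of $U_-$;
\item commute the torus factor through.
\end{itemize}

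I expect this second diagram to be the main obstacle. The sign conventions in $\overline{\overline{s_i}}$ and $B_{-i}$, together with the inverse in $(t_1')^{-1}$, must combine to give exactly $t_1'=t_0'((t_0')^{\alpha_i}q)^{-\alpha_i^\vee}$ rather than a version with a sign or an inverted parameter. I would settle this by doing the explicit $2\times 2$ computation inside $\phi_i(SL_2)$. If needed, I would instead transport the first case through the transpose-inverse anti-involution that exchanges $U_+$ and $U_-$.
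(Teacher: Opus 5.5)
Your proposal is correct and follows essentially the same route as the paper. Both arguments take the normal form $t_0B_i(z)U_+$ from Lemma~\ref{lem: relative_pos_matrix}, get $z=\Delta_i(B_i(z))\neq 0$ from general position of $A^1$ and $A_0$, use the $SL_2$ Gauss factorization of $B_i(z)$, and conjugate $t_0$ past $e_{-i}$ to obtain $p=(t_0^{\alpha_i}z)^{-1}$; your translation by $g^{-1}$ and explicit uniqueness check are only cosmetic additions.

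The paper proves only the left diagram, and the sign trouble you anticipate in the right one does not occur. Indeed $B_{-i}(z)=e_i(z^{-1})\,(z^{-1})^{\alpha_i^\vee}\,e_{-i}(-z^{-1})$ gives $A_1=e_i\bigl((t_0')^{-\alpha_i}z^{-1}\bigr)(t_0')^{-1}(z^{-1})^{\alpha_i^\vee}U_-$, so $q=(t_0')^{-\alpha_i}z^{-1}$ and $t_1'$ comes out exactly as stated.
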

\begin{proof}
We only prove the case for the left diagram. Let $A^1 = xU_+$. By Lemma~\ref{lem: relative_pos_matrix}, we can choose $x$ such that $(gt_0)^{-1}x = B_i(z)$. Since $A^1, A_0$ are in general position, we have $\Delta_j((t'_0g^{-1})gt_0B_i(z))\neq 0,\forall j\in [r+1]$. Since $t'_0t_0\in T$, we have $\Delta_j(B_i(z))\neq 0, \forall j\in [r+1]$. In particular, $z = \Delta_i(B_i(z))\neq 0$. 

Thus, $B_i(z)U_+ = \phi_i\left(\m{z&0\\1&z^{-1}}\right)U_+ = e_{-i}(z^{-1})\phi_i\left(\m{z&0\\0&z^{-1}}\right)U_+$, and  $$t_0B_i(z)U_+ = e_{-i}(t_0^{-\alpha_i}z^{-1})t_0\phi_i\left(\m{z&0\\0&z^{-1}}\right)U_+$$

Let $p := (t^{\alpha_i}_0z)^{-1}$, then we get $$t_0B_i(z)U_+ = e_{-i}(p)\cdot(t_0(t_0^{\alpha_i}p)^{-\alpha^{\vee}_i})U_+$$ 

Let $t_1 := t_0(t_0^{\alpha_i}p)^{-\alpha^{\vee}_i}$, we have $$A^1 = gt_0B_i(z) U_+= ge_{-i}(p)t_1U_+$$
\end{proof}

\begin{rem}\label{rem: back_propagation_of_decoration}
    Note that $t\mapsto (t(t^{\alpha_i}p)^{-\alpha^{\vee}_i})$ is an involution for fixed parameters $i\in [r], p\in \mathbb{C}^*$. Thus, in the above Lemma~\ref{lem:forward_inductive_param}, $t_0 = (t_1(t_1^{\alpha_i}p)^{-\alpha^{\vee}_i})$.  
\end{rem}

In the following lemma, we adapt Lemma 3.10, Corollary 3.22, Proposition 3.23 in \cite[Section 3.3]{Shen_Weng_2021} to provide our precise way to parameterize this torus.

\begin{lem}\label{lem:torus_param}
    Given any point $(A^0,...,A^m,A_0,...,A_n)\in T(C)$, after applying a left action, normalize to $A^0=t_0U_+, A_0 = U_-$, for some unique diagonal matrix $t_0$. For each upward facing (resp. downward facing) triangle with $s_i$ on the base, one can find a unique $p\in\mathbb{C}^*$ and associate a Chevalley generator $e_{-i}(p)$ (resp. $e_i(p)$), such that each flag has a unique matrix representative which is the product of Chevalley generators coming from the triangles to the left of the first diagonal joining the vertex of the flag in $C$, followed by right-multiplying by a unique diagonal matrix.  
\end{lem}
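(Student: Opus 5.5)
Our approach is induction along the triangulation $C=C_\beta$, scanning triangles from left to right and applying Lemma~\ref{lem:forward_inductive_param} once per triangle. The inductive invariant at each diagonal $(A^a,A_b)$ reached after processing the first $k$ triangles is this: there are Chevalley factors $g_k := \epsilon_1(p_1)\cdots\epsilon_k(p_k)$ and diagonal matrices $t_k, t'_k\in T$ such that
\[
A^a = g_k t_k U_+, \qquad A_b = g_k (t_k')^{-1}U_-.
\]
Here $\epsilon_j = e_{-i_j}$ for an upward triangle and $\epsilon_j=e_{i_j}$ for a downward one. We also require that $g_k,t_k,t'_k$ satisfy the hypotheses of Lemma~\ref{lem:normalization_on_diagonal} for this diagonal.

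For the base case, the left action lets us take $A_0=U_-$ and $A^0=t_0U_+$ with $g_0=\mathrm{Id}$ and $t'_0=\mathrm{Id}$. The diagonal $t_0$ is unique, because $t'_0t_0$ is determined by Lemma~\ref{lem:normalization_on_diagonal}. Equivalently, its entries are the ratios of generalized minors $\Delta_i(A_0,A^0)$ used in the proof of that lemma.

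For the inductive step, suppose the next triangle is upward facing with $s_i$ on top. Its left edge is the current diagonal, which is in general position by the definition of $T(C)$. Its new diagonal $(A^{a+1},A_b)$ is also in general position. Lemma~\ref{lem:forward_inductive_param}, applied with $g=g_k$, gives a unique $p\in\mathbb{C}^*$ with $A^{a+1}=g_k e_{-i}(p)t_{k+1}U_+$, where $t_{k+1}=t_k(t_k^{\alpha_i}p)^{-\alpha_i^\vee}$. The bottom flag $A_b$ is unchanged. We rewrite it as $A_b = g_ke_{-i}(p)(t'_k)^{-1}U_-$, which is legitimate because $e_{-i}(p)\in U_-$ and $(t'_k)^{-1}$ normalizes $U_-$. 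Precisely, we use $(t'_k)^{-1}e_{-i}(p)^{-1}\cdots$ to absorb $e_{-i}(p)$: we have $g_k(t_k')^{-1}U_- = g_ke_{-i}(p)\,e_{-i}(p)^{-1}(t'_k)^{-1}U_-$, and $e_{-i}(p)^{-1}(t'_k)^{-1} = (t'_k)^{-1}e_{-i}(p')$ for some $p'$. We set $t'_{k+1}=t'_k$ and $g_{k+1}=g_ke_{-i}(p)$, so the invariant holds again. The downward-facing case is symmetric: $e_i(q)\in U_+$ is absorbed into the top flag, and $t_{k+1}=t_k$.

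By construction, each flag $A^a$ or $A_b$ first appears on some diagonal, namely the first diagonal through its vertex. At that moment it is written as the product of the Chevalley generators from the triangles to the left of that diagonal, followed by a diagonal matrix. This gives existence.

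Uniqueness is proved inductively as well. Each $p$ is unique by Lemma~\ref{lem:forward_inductive_param}, given the preceding data. Because the factors $g_k$ are fixed by the earlier choices, the diagonal matrix is then determined by the formula for $t_{k+1}$. The normalization $A_0=U_-$, $A^0=t_0U_+$ removes the remaining $G$-ambiguity. The part we expect to need the most care is checking that the diagonal matrix attached to a flag is well defined. After the flag first appears, later triangles that reuse it only absorb unipotents, and the diagonal factor still seen on it does not change. The representative is read at the first diagonal, and the absorption computation above is what guarantees this.
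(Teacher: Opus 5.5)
Your proof is correct and follows essentially the same route as the paper's. Both argue by induction over the triangles of $C$: the flag on the current diagonal that the new triangle does not change is rewritten by absorbing the new unipotent factor (using that $T$ normalizes $U_\pm$), and then Lemma~\ref{lem:forward_inductive_param} is applied with $g$ equal to the running Chevalley product. Your invariant, which writes both flags of the current diagonal over a common $g_k$, is a slightly tidier packaging of the paper's step of absorbing $e_{-i_1}(p_1)\cdots e_{-i_a}(p_a)$ into $A_l$.
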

\begin{proof}
    We use induction on the index of the triangles in $C$. Suppose all flags in $(A^0,...,A^{k}, A_0,...,A_l)$ have a unique parametrization as described in the statement, where $A^k$ and $A_l$ are joined by a diagonal. We prove the case where $A^k\xrightarrow[]{s_i} A^{k+1}$, and $A^{k+1}$ and $A_l$ are joined by a diagonal. The other case is similar. 
    
    Note that the diagonal connecting $A^k, A^l$ is either the first diagonal connecting $A^k$ or the first diagonal connecting $A^l$. See Figure~\ref{fig:toric_param} on the top row. 

    \begin{figure}[h!]
    \centering
    \tikzset{every picture/.style={line width=0.75pt}} 

\begin{tikzpicture}[x=0.75pt,y=0.75pt,yscale=-1,xscale=1]

\draw    (84,68) -- (116,68) ;
\draw [shift={(118,68)}, rotate = 180] [color={rgb, 255:red, 0; green, 0; blue, 0 }  ][line width=0.75]    (10.93,-3.29) .. controls (6.95,-1.4) and (3.31,-0.3) .. (0,0) .. controls (3.31,0.3) and (6.95,1.4) .. (10.93,3.29)   ;
\draw    (191,68) -- (223,68) ;
\draw [shift={(225,68)}, rotate = 180] [color={rgb, 255:red, 0; green, 0; blue, 0 }  ][line width=0.75]    (10.93,-3.29) .. controls (6.95,-1.4) and (3.31,-0.3) .. (0,0) .. controls (3.31,0.3) and (6.95,1.4) .. (10.93,3.29)   ;
\draw    (72,80) -- (135.98,134.24) ;
\draw    (234.98,79.24) -- (157.98,141.24) ;
\draw    (180.98,74.24) -- (151.98,134.24) ;
\draw    (124.98,69.24) -- (142.98,131.24) ;
\draw    (258,69) -- (290,69) ;
\draw [shift={(292,69)}, rotate = 180] [color={rgb, 255:red, 0; green, 0; blue, 0 }  ][line width=0.75]    (10.93,-3.29) .. controls (6.95,-1.4) and (3.31,-0.3) .. (0,0) .. controls (3.31,0.3) and (6.95,1.4) .. (10.93,3.29)   ;
\draw    (298.98,78.24) -- (162.22,145.96) ;
\draw    (94,144) -- (126,144) ;
\draw [shift={(128,144)}, rotate = 180] [color={rgb, 255:red, 0; green, 0; blue, 0 }  ][line width=0.75]    (10.93,-3.29) .. controls (6.95,-1.4) and (3.31,-0.3) .. (0,0) .. controls (3.31,0.3) and (6.95,1.4) .. (10.93,3.29)   ;
\draw    (24,68) -- (56,68) ;
\draw [shift={(58,68)}, rotate = 180] [color={rgb, 255:red, 0; green, 0; blue, 0 }  ][line width=0.75]    (10.93,-3.29) .. controls (6.95,-1.4) and (3.31,-0.3) .. (0,0) .. controls (3.31,0.3) and (6.95,1.4) .. (10.93,3.29)   ;
\draw    (100,274) -- (132,274) ;
\draw [shift={(134,274)}, rotate = 180] [color={rgb, 255:red, 0; green, 0; blue, 0 }  ][line width=0.75]    (10.93,-3.29) .. controls (6.95,-1.4) and (3.31,-0.3) .. (0,0) .. controls (3.31,0.3) and (6.95,1.4) .. (10.93,3.29)   ;
\draw    (193.2,206.81) -- (159.2,264.81) ;
\draw    (187.2,206.81) -- (128.7,262.81) ;
\draw    (191.2,197.81) -- (95.2,267.81) ;
\draw    (221,197) -- (253,197) ;
\draw [shift={(255,197)}, rotate = 180] [color={rgb, 255:red, 0; green, 0; blue, 0 }  ][line width=0.75]    (10.93,-3.29) .. controls (6.95,-1.4) and (3.31,-0.3) .. (0,0) .. controls (3.31,0.3) and (6.95,1.4) .. (10.93,3.29)   ;
\draw    (264.2,209.81) -- (210,260) ;
\draw    (155,274) -- (187,274) ;
\draw [shift={(189,274)}, rotate = 180] [color={rgb, 255:red, 0; green, 0; blue, 0 }  ][line width=0.75]    (10.93,-3.29) .. controls (6.95,-1.4) and (3.31,-0.3) .. (0,0) .. controls (3.31,0.3) and (6.95,1.4) .. (10.93,3.29)   ;
\draw    (152,195) -- (184,195) ;
\draw [shift={(186,195)}, rotate = 180] [color={rgb, 255:red, 0; green, 0; blue, 0 }  ][line width=0.75]    (10.93,-3.29) .. controls (6.95,-1.4) and (3.31,-0.3) .. (0,0) .. controls (3.31,0.3) and (6.95,1.4) .. (10.93,3.29)   ;
\draw    (200,205.76) -- (200.2,263.81) ;
\draw    (37,275) -- (69,275) ;
\draw [shift={(71,275)}, rotate = 180] [color={rgb, 255:red, 0; green, 0; blue, 0 }  ][line width=0.75]    (10.93,-3.29) .. controls (6.95,-1.4) and (3.31,-0.3) .. (0,0) .. controls (3.31,0.3) and (6.95,1.4) .. (10.93,3.29)   ;
\draw    (453.98,80.24) -- (453.98,133.33) ;
\draw    (550,70) -- (593,70) ;
\draw [shift={(595,70)}, rotate = 180] [color={rgb, 255:red, 0; green, 0; blue, 0 }  ][line width=0.75]    (10.93,-3.29) .. controls (6.95,-1.4) and (3.31,-0.3) .. (0,0) .. controls (3.31,0.3) and (6.95,1.4) .. (10.93,3.29)   ;
\draw    (608.98,85.24) -- (472.22,152.96) ;
\draw    (454.82,212.09) -- (454.82,260.09) ;
\draw    (520,198) -- (566,198) ;
\draw [shift={(568,198)}, rotate = 180] [color={rgb, 255:red, 0; green, 0; blue, 0 }  ][line width=0.75]    (10.93,-3.29) .. controls (6.95,-1.4) and (3.31,-0.3) .. (0,0) .. controls (3.31,0.3) and (6.95,1.4) .. (10.93,3.29)   ;
\draw    (586.82,209.09) -- (504.82,260.09) ;

\draw (58,57) node [anchor=north west][inner sep=0.75pt]    {$A^{k'}$};
\draw (228,58.4) node [anchor=north west][inner sep=0.75pt]    {$A^{k}$};
\draw (136,134.4) node [anchor=north west][inner sep=0.75pt]    {$A_{l}$};
\draw (145,65.4) node [anchor=north west][inner sep=0.75pt]    {$...$};
\draw (144,99.4) node [anchor=north west][inner sep=0.75pt]    {$...$};
\draw (295,58.4) node [anchor=north west][inner sep=0.75pt]    {$A^{k+1}$};
\draw (72,141.4) node [anchor=north west][inner sep=0.75pt]    {$...$};
\draw (2,65.4) node [anchor=north west][inner sep=0.75pt]    {$...$};
\draw (76,265.4) node [anchor=north west][inner sep=0.75pt]    {$A_{l'}$};
\draw (193,185.4) node [anchor=north west][inner sep=0.75pt]    {$A^{k}$};
\draw (197,264.4) node [anchor=north west][inner sep=0.75pt]    {$A_{l}$};
\draw (148,247.21) node [anchor=north west][inner sep=0.75pt]    {$...$};
\draw (258,186.4) node [anchor=north west][inner sep=0.75pt]    {$A^{k+1}$};
\draw (137,271.4) node [anchor=north west][inner sep=0.75pt]    {$...$};
\draw (130,192.4) node [anchor=north west][inner sep=0.75pt]    {$...$};
\draw (15,272.4) node [anchor=north west][inner sep=0.75pt]    {$...$};
\draw (359,57.4) node [anchor=north west][inner sep=0.75pt]    {$A^{k} =ge_{-i_{1}}( p_{1}) ...e_{-i_{a}}( p_{a}) t_{k} U_{+}{}$};
\draw (608,58.4) node [anchor=north west][inner sep=0.75pt]    {$A^{k+1}$};
\draw (375,140.4) node [anchor=north west][inner sep=0.75pt]    {$A_{l} =g( t'_{l})^{-1} U_{-}$};
\draw (565,52) node [anchor=north west][inner sep=0.75pt]    {$s_{i}$};
\draw (443,186.4) node [anchor=north west][inner sep=0.75pt]    {$A^{k} =gt_{k} U_{+}{}$};
\draw (581,186.4) node [anchor=north west][inner sep=0.75pt]    {$A^{k+1}$};
\draw (406,266.4) node [anchor=north west][inner sep=0.75pt]    {$A_{l} =ge_{j_{1}}( q_{1}) ...e_{j_{b}}( q_{b})( t'_{l})^{-1} U_{-}$};
\draw (537,180) node [anchor=north west][inner sep=0.75pt]    {$s_{i}$};
\draw (265,48) node [anchor=north west][inner sep=0.75pt]    {$s_{i}$};
\draw (225,178) node [anchor=north west][inner sep=0.75pt]    {$s_{i}$};

\end{tikzpicture}
    \caption{Inductive step for parametrizing a triangulation.}
    \label{fig:toric_param}
    \end{figure}

    Note that
    \begin{equation*}
        \begin{split}
            A_l &= g(t'_l)^{-1}U_-\\
            &= g(e_{-i_1}(p_1)...e_{-i_a}(p_a))(e_{-i_1}(p_1)...e_{-i_a}(p_a))^{-1}(t'_l)^{-1}U_-\\
            &=g(e_{-i_1}(p_1)...e_{-i_a}(p_a))(e_{-i_a}(-p_a)...e_{-i_1}(-p_1))(t'_l)^{-1}U_-\\
            &= g(e_{-i_1}(p_1)...e_{-i_a}(p_a))(t'_l)^{-1}(e_{-i_a}(-(t'_l)^{-\alpha_{i_a}}p_a)...e_{-i_1}(-(t'_l)^{-\alpha_{i_1}}p_1))U_-\\
            &= g(e_{-i_1}(p_1)...e_{-i_a}(p_a))(t'_l)^{-1}U_-\\
        \end{split}
    \end{equation*}
    We are in the shape to apply Lemma~\ref{lem:forward_inductive_param} to get that there is a unique $p_{k+1}\in \mathbb{C}^*$, such that $$A^{k+1} = g(e_{-i_1}(p_1)...e_{-i_a}(p_a))e_{-i}(p_{k+1})t_{k+1}U_+$$ where $t_{k+1} = t_k(t^{\alpha_i}_kp_{k+1})^{-\alpha^\vee_i}$.
\end{proof}

\begin{exmp}\label{exmp:triang_param}
Let $r = 2$, $\beta = (-1,1,2,-2,-1,2)$, so the top word is $\beta_+ = (1,2,2)$, and the bottom word is $\beta_- = (1,2,1)$. Since $\beta_+$ is not reduced, $G_\beta$ is also not reduced. The corresponding plabic fence with right canonical gauge is in Figure~\ref{fig:weighted_fence}, and the point in $Conf(s_1s_2s_2,s_1s_2s_1)$ it parameterizes is in Figure~\ref{fig:param_point}, where $d = diag\{d_1,d_2,d_3\}$, and all the other decorations are uniquely determined via Lemma~\ref{lem:torus_param}.

\begin{figure}[h!]
    \centering
    \tikzset{every picture/.style={line width=0.75pt}} 

\begin{tikzpicture}[x=0.75pt,y=0.75pt,yscale=-1,xscale=1,font=\small]

\draw    (42,126) -- (42,154) -- (42,176) ;
\draw    (55,191) -- (90,191) ;
\draw [shift={(92,191)}, rotate = 180] [color={rgb, 255:red, 0; green, 0; blue, 0 }  ][line width=0.75]    (10.93,-3.29) .. controls (6.95,-1.4) and (3.31,-0.3) .. (0,0) .. controls (3.31,0.3) and (6.95,1.4) .. (10.93,3.29)   ;
\draw    (57,107) -- (81.72,107) ;
\draw [shift={(83.72,107)}, rotate = 180] [color={rgb, 255:red, 0; green, 0; blue, 0 }  ][line width=0.75]    (10.93,-3.29) .. controls (6.95,-1.4) and (3.31,-0.3) .. (0,0) .. controls (3.31,0.3) and (6.95,1.4) .. (10.93,3.29)   ;
\draw    (58,125) -- (111,178) ;
\draw    (185,191) -- (213,191) ;
\draw [shift={(215,191)}, rotate = 180] [color={rgb, 255:red, 0; green, 0; blue, 0 }  ][line width=0.75]    (10.93,-3.29) .. controls (6.95,-1.4) and (3.31,-0.3) .. (0,0) .. controls (3.31,0.3) and (6.95,1.4) .. (10.93,3.29)   ;
\draw    (430,190) -- (461.49,190) ;
\draw [shift={(463.49,190)}, rotate = 180] [color={rgb, 255:red, 0; green, 0; blue, 0 }  ][line width=0.75]    (10.93,-3.29) .. controls (6.95,-1.4) and (3.31,-0.3) .. (0,0) .. controls (3.31,0.3) and (6.95,1.4) .. (10.93,3.29)   ;
\draw    (200,108) -- (235.49,108) ;
\draw [shift={(237.49,108)}, rotate = 180] [color={rgb, 255:red, 0; green, 0; blue, 0 }  ][line width=0.75]    (10.93,-3.29) .. controls (6.95,-1.4) and (3.31,-0.3) .. (0,0) .. controls (3.31,0.3) and (6.95,1.4) .. (10.93,3.29)   ;
\draw    (393,108) -- (445,108) ;
\draw [shift={(447,108)}, rotate = 180] [color={rgb, 255:red, 0; green, 0; blue, 0 }  ][line width=0.75]    (10.93,-3.29) .. controls (6.95,-1.4) and (3.31,-0.3) .. (0,0) .. controls (3.31,0.3) and (6.95,1.4) .. (10.93,3.29)   ;
\draw    (140.25,124) -- (140.25,176.87) ;
\draw    (169,177) -- (265.72,124.32) ;
\draw    (305,124) -- (304.72,178.32) ;
\draw    (366,123) -- (480.72,175.32) ;
\draw    (552,125) -- (552,175.32) ;

\draw (22,98.4) node [anchor=north west][inner sep=0.75pt]    {$t_{0} U_{+}$};
\draw (32,183.4) node [anchor=north west][inner sep=0.75pt]    {$U_{-}$};
\draw (92,99.4) node [anchor=north west][inner sep=0.75pt]    {$\textcolor[rgb]{0.24,0.27,0.89}{e_{1}( q_{1})}\textcolor[rgb]{0.24,0.27,0.89}{e}\textcolor[rgb]{0.24,0.27,0.89}{_{-1}}\textcolor[rgb]{0.24,0.27,0.89}{(}\textcolor[rgb]{0.24,0.27,0.89}{p}\textcolor[rgb]{0.24,0.27,0.89}{_{1}}\textcolor[rgb]{0.24,0.27,0.89}{)} t_{1} U_{+}$};
\draw (95,180.4) node [anchor=north west][inner sep=0.75pt]    {$\textcolor[rgb]{0.24,0.27,0.89}{e_{1}( q_{1})}( t'_{1})^{-1} U_{-}$};
\draw (241,98.4) node [anchor=north west][inner sep=0.75pt]    {$\textcolor[rgb]{0.24,0.27,0.89}{e_{1}( q_{1}) e_{-1}( p_{1}) e_{-2}( p_{2})} t_{2} U_{+}$};
\draw (220,179.4) node [anchor=north west][inner sep=0.75pt]    {$\textcolor[rgb]{0.24,0.27,0.89}{e_{1}( q_{1}) e_{-1}( p_{1}) e_{-2}( p_{2}) e_{2}( q_{2})}( t'_{2})^{-1} U_{-}$};
\draw (466,178.4) node [anchor=north west][inner sep=0.75pt]    {$\textcolor[rgb]{0.24,0.27,0.89}{e_{1}( q_{1}) e_{-1}( p_{1}) e_{-2}( p_{2}) e_{2}( q_{2}) e_{1}( q_{3})}( t'_{3})^{-1} U_{-}$};
\draw (452,97.4) node [anchor=north west][inner sep=0.75pt]    {$\textcolor[rgb]{0.24,0.27,0.89}{e_{1}( q_{1}) e_{-1}( p_{1}) e_{-2}( p_{2}) e_{2}( q_{2}) e_{1}( q_{3}) e_{-2}( q_{3})}\textcolor[rgb]{0.25,0.46,0.02}{d} U_{+}$};
\draw (60,90) node [anchor=north west][inner sep=0.75pt]    {$s_{1}$};
\draw (65,199.4) node [anchor=north west][inner sep=0.75pt]    {$s_{1}$};
\draw (210,90) node [anchor=north west][inner sep=0.75pt]    {$s_{2}$};
\draw (184,198.4) node [anchor=north west][inner sep=0.75pt]    {$s_{2}$};
\draw (405,90) node [anchor=north west][inner sep=0.75pt]    {$s_{2}$};
\draw (435,193.4) node [anchor=north west][inner sep=0.75pt]    {$s_{1}$};

\end{tikzpicture}
    \caption{Parameterized point in $Conf(s_1s_2s_2,s_1s_2s_1).$ The matrix $e_i(q)$ is the identity matrix with a $q$ instead of a 0 in the $(i,i+1)$ position. Similarly, $e_{-i}(p)$ is the identity matrix with a $p$ replacing the 0 in the $(i+1,i)$ position.}
    \label{fig:param_point}
    \end{figure}    
\end{exmp}

\begin{rem}
This parameterization is analogous to the one in \cite[Section 3]{Shen_Weng_2021}. They are not exactly the same, however, as we always have a single diagonal matrix multiplying the Chevalley matrices at the end, while Shen and Weng's has some diagonal matrices interspersed throughout the product (see the proof of \cite[Proposition 3.23]{Shen_Weng_2021} for details). This amounts to some monomial factors multiplying the $p's$ and $q's$.
\end{rem}

Moreover, for the same triangulation $C$, if a diagonal joining $x U_+$ and $U_- y$ exists in $C$ we consider their generalized minors $\Delta_i(yx)$ for each $i\in [r]$. Taking all such minors and removing duplicates (see \cite[Section 3.3]{Shen_Weng_2021}) gives monomial coordinates for the same torus $T(C).$

Given a double word $\beta$, we have an associated plabic fence $G_\beta$ and an associated triangulation $C_\beta$. We give here a correspondence between the faces of $G_\beta$ and the non-vanishing generalized minors of $C_\beta$ which is an interpretation of the string diagrams of \cite[Section 3]{Shen_Weng_2021} in this setting. Therefore, we can label the corresponding generalized minor of the face $\binom{i}{a}$ as $A_{\binom{i}{a}}$.  

A \textbf{slice} of a plabic fence is any vertical line that does not intersect with any vertical edge of the plabic fence. Each slice corresponds to a diagonal in the corresponding triangulation: the vertical edges to the left of the slice correspond to the triangles to the left of the corresponding diagonal. Let $xU_+$ and $U_-y$ be the two flags joined by this diagonal. Then we associate the face on the $i$-th row intersected by this slice the generalized minor $\Delta_i(yx)$.

Note that a face $\binom{i}{a}$ may be intersected by different slices, so a priori, different slices on this face may give different generalized minors. However, Proposition 3.21 in ~\cite{Shen_Weng_2021} guarantees that if there are no vertical edges in the $i$-th row between the two different slices on the face $\binom{i}{a}$, then the generalized minors given by the two slices are equal.

\begin{exmp}
    See Figure~\ref{fig:fence_face_to_gen_minor}. For $\beta = (-1,1,2,-2,-1,2)$, on the left is the plabic fence $G_\beta$, and on the right is the triangulation $C_\beta$. The blue and orange slices on the left correspond to the blue and orange diagonals on the right. 
    \begin{figure}[h!]
    \centering
    \includestandalone[mode=tex, width=0.8\textwidth]
    {figures/fence_face_to_gen_minor}
    \caption{Illustration of slices on plabic fences and their corresponding diagonals}
    \label{fig:fence_face_to_gen_minor}
    \end{figure}
    Let $A^1 = xU_+, A_1 = U_-y$. Then, by Lemma~\ref{lem: relative_pos_matrix}, $A^2 = xB_2(z)U_+, A_2 = U_-B_{2}(w)y$, for some $z,w\in \mathbb{C}$. With respect to the blue diagonal, the generalized minor for the face $\binom{1}{2}$ is $\Delta_1(yx)$, and with respect to the orange diagonal, the generalized minor for the face $\binom{1}{2}$ is $\Delta_1(B_2(w)yxB_2(z))$. However, since the right action by $B_2(z)$ is a column operation on the second and third columns, leaving the first column unchanged, we have $\Delta_1(B_2(w)yxB_2(z)) = \Delta_1(B_2(w)yx)$. By the same reasoning for the left action by $B_2(w)$, we have $\Delta_1(B_2(w)yx)= \Delta_1(yx)$, illustrating that the two generalized minors are equal. 
\end{exmp}

\begin{thm}[{\cite[Theorems 3.45, 4.13]{Shen_Weng_2021}}]
    Given a triangulation $C_\beta$ in $\text{Conf}({\beta_+},{\beta_-})$ we have associated it a quiver $Q_\beta$ (see \cite[Notation 3.5]{Shen_Weng_2021}). Assign to vertices of $Q_\beta$ the generalized minors $A_{\binom{i}{a}}$ associated to the corresponding faces of $G_\beta$. Then the cluster algebra generated by this seed $\Sigma_\beta:= (\{A_{\binom{i}{a}}\}_{\binom{i}{a}\in F(G_\beta)}, Q_\beta)$ coincides with the coordinate ring of $\text{Conf}({\beta_+},{\beta_-})$. 
\end{thm}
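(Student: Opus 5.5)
The plan is to reduce the statement to the cited results of Shen--Weng. This means checking that the seed we wrote down in plabic-fence language is literally the seed Shen--Weng attach to the triangulation $C_\beta$. There are two pieces of data to match: the cluster variables, i.e.\ the functions $A_{\binom{i}{a}}$, and the exchange matrix, i.e.\ the quiver. Once both agree, \cite[Theorems 3.45, 4.13]{Shen_Weng_2021} give the equality of the cluster algebra with $\mathbb{C}[\text{Conf}(\beta_+,\beta_-)]$ directly. Nothing about weights or matchings enters, so the proof is purely a dictionary check.

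\textbf{Step 1: the cluster variables.} First I will identify the faces of $G_\beta$ with the vertices of Shen--Weng's string diagram for $C_\beta$.
\begin{enumerate}
\item A face $\binom{i}{a}$ in row $i$ corresponds to a maximal interval of slices in which no vertical edge in row $i$ occurs. In the string diagram, this is a segment of the $i$-th string between two consecutive nodes labelled $i$.
\item The function attached to such a segment is $\Delta_i(yx)$ for any diagonal $(xU_+, U_-y)$ in that interval. By \cite[Proposition 3.21]{Shen_Weng_2021}, as recalled above, this is well defined.
\item This is exactly how $A_{\binom{i}{a}}$ was defined. Since generalized minors are intrinsic functions of the flags, the choice of parametrization in Lemma~\ref{lem:torus_param} (which differs from Shen--Weng's by monomial rescalings) plays no role.
\item The top and bottom unbounded faces would carry $\Delta_0$ and $\Delta_{r+1}$. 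Both are constant, since $\Delta_{r+1}=\det=1$ in $SL_{r+1}$, and they are excluded on both sides.
\item The leftmost and rightmost faces in each row are the ones Shen--Weng declare frozen. These are the unbounded faces we marked frozen.
\end{enumerate}

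\textbf{Step 2: the quiver.} Both quivers are built locally, one contribution per letter of $\beta$, and then glued after cancelling $2$-cycles and frozen--frozen arrows. So it suffices to compare the contribution of a single column edge $e_j$ with $|i_j|=i$, together with the horizontal edges adjacent to it.

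In $G_\beta$, a column $e_j$ with white endpoint on top ($i_j>0$) borders four faces:
\begin{itemize}
\item the faces $\binom{i}{a}$ and $\binom{i}{a+1}$ on either side of it in row $i$;
\item the face in row $i-1$ directly above its white vertex;
\item the face in row $i+1$ directly below its black vertex.
\end{itemize}
Applying the white-on-left rule gives:
\begin{itemize}
\item one horizontal arrow between $\binom{i}{a}$ and $\binom{i}{a+1}$;
\item arrows between row $i$ and the neighbouring rows, coming from the horizontal edges on the $i$-th and $(i+1)$-th lines adjacent to the column.
\end{itemize}
I will draw this local picture once for $i_j>0$ and once for $i_j<0$, and compare it with \cite[Notation 3.5]{Shen_Weng_2021}, where each node of the string diagram contributes the analogous triangle of arrows with orientation depending on the sign.

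If the conventions come out globally reversed, I will note that reversing all arrows gives the opposite quiver. The opposite quiver generates the same cluster algebra, since the exchange relations are symmetric in $\prod_{i\to k}$ and $\prod_{k\to i}$. Hence the statement is unaffected. A sanity check on Example~\ref{exem: weighted_fence} against Figure~\ref{fig:fence_quiver_ex} will fix the signs.

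\textbf{Main obstacle.} The main difficulty is the bookkeeping in Step 2: making sure that the half-arrows Shen--Weng assign between frozen vertices, and the arrows between adjacent rows, match after the removal of $2$-cycles. I would handle this by verifying that the two local rules agree edge by edge before any cancellation. Since cancellation of $2$-cycles and deletion of frozen--frozen arrows are performed identically on both sides, agreement of the pre-cancellation multigraphs implies agreement of $Q_\beta$ with Shen--Weng's quiver. Shen--Weng's half-weight arrows between frozen vertices only affect frozen--frozen entries, which do not affect mutation and are discarded here anyway.
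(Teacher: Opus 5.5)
Your proposal is correct and matches the paper's treatment. The paper also imports the theorem from Shen--Weng and identifies the seed the same way: the cluster variables via the slice/string-diagram correspondence, with \cite[Proposition 3.21]{Shen_Weng_2021} giving well-definedness, and the quiver, up to frozen--frozen arrows, via the remark following the theorem, which appeals to \cite[Remark 3.6]{Shen_Weng_2021}. One small point: the statement takes $Q_\beta$ to be Shen--Weng's quiver directly, so your Step 2 is really the content of that later remark rather than something the theorem itself requires.
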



\begin{exmp}
    Both the plabic fence in Figure~\ref{fig:weighted_fence} and the triangulation in Figure~\ref{fig:triang_example} correspond to the double braid word $(-1,1,2,-2,-1,2).$
\end{exmp}

\begin{rem}
    If we specialize \cite[Notation 3.5]{Shen_Weng_2021} to type A, and ignore arrows between boundary faces, we get the same quiver as in Section~\ref{sec: plabic_graphs}. This follows from the discussion of slices in the previous page together with \cite[Remark 3.6]{Shen_Weng_2021}.
    
    Moreover, the connection between plabic fences and double Bott-Samelson varieties has been noted before \cite{grid_plabic,filling_seed}. Often, the varieties considered assume $\beta_-= (e),$ so that the fence only has one type of column. Any double Bott-Samelson variety is isomorphic to one of this type \cite{Shen_Weng_2021}. However, as our goal is to generalize a minimal matching construction to generic, (possibly) non-reduced plabic fences, we keep the two different types of flags. This does create some additional technical difficulties (see Remark~\ref{rem: technical_difficulty_of_contructions}).
\end{rem}

\begin{defn}\label{def: canonical_parametrization}
    Given a right-canonically weighted fence $G_\beta$, we can also associate to it a point in $T(C_\beta)$. A left-to-right scanning of the vertical edges' weights allows us to form the Chevalley generator factorization of the undecorated flags in the configuration space (see Figure~\ref{fig:param_point}). Moreover, if we require that the product of the rightmost horizontal edges' weights be 1, then we can use these weights to fix the decoration of the top-right flag, $A^m = g\cdot diag\{d_1,...,d_{r+1}\}U_+$, where $gB_+$ is the undecorated counterpart of $A^m$, and $d^{-1}_i$ is the weight of the rightmost horizontal edge on the $i$-th row, for $1\leq i\leq r+1$. Importantly, by Remark~\ref{rem: back_propagation_of_decoration}, this allows us to recover a unique parametrization in the form of Lemma~\ref{lem:torus_param}. We call this the \textbf{right canonical parametrization of $T(C_\beta)$} through weights of $G_\beta$.
\end{defn}

\begin{exmp}\label{exmp: computing_decorations}
    Continuing with the parametrization in Example~\ref{exmp:triang_param}. If we fix the parameters $p_i, q_i, i\in\{1,2,3\}$ and $d = \text{diag}\{d_1,...,d_{r+1}\}$ which come from the column and rightmost horizontal-edge weights of the canonically weighted plabic fence in Example~\ref{exem: weighted_fence}, we can recover all the torus elements following Remark~\ref{rem: back_propagation_of_decoration} and Lemma~\ref{lem:forward_inductive_param} as follows:
    
    $$\begin{cases}
        t_2 = d(d^{\alpha_2}p_3)^{-\alpha^\vee_2} = diag\{d_1,d_3p^{-1}_3,d_2p_3\}\\
        t_1 = t_2(t^{\alpha_2}_2p_2)^{-\alpha^\vee_2} = diag\{d_1,d_2p_3p^{-1}_2, d_3p^{-1}_3p_2\}\\
        t_0 = t_1(t^{\alpha_1}_1p_1)^{-\alpha^\vee_1} = diag\{d_2p_3p^{-1}_2p^{-1}_1, d_1p_1,d_3p^{-1}_3p_2\}\\
        t'_1 = \text{Id}_3(\text{Id}_3^{-\alpha_1}q_1)^{-\alpha^\vee_1} = diag\{q^{-1}_1,q_1,1\}\\
        t'_2 = t'_1(t'^{\alpha_2}_1q_2)^{-\alpha^\vee_2} = diag\{q^{-1}_1, q^{-1}_2, q_1q_2\}\\
        t'_3 = t'_2(t'^{\alpha_1}_2q_3)^{-\alpha^\vee_1} = diag\{q^{-1}_2q^{-1}_3, q^{-1}_1q_3, q_1q_2\}
    \end{cases}$$
\end{exmp}

\subsection{Comments on our parametrization}

In this subsection, we make the point that our parametrization in Lemma~\ref{lem:torus_param} is compatible with moves on right-canonically weighted plabic fences, further strengthening the connections with plabic graph theory.

First, note that when we perform a braid move, the column weights change in Figure~\ref{fig:braid_move} which we obtained from general moves on plabic graphs, actually corresponds to the following matrix identity

\begin{equation}\label{eqn: Lusztig_relation}
    e_{-i}(p_{k})e_{-(i+1)}(p_{k+1})e_{-i}(p_{k+2}) = e_{-(i+1)}\left(\frac{p_{k+1}p_{k+2}}{p_k+p_{k+2}}\right)e_{-i}(p_k+p_{k+2})e_{-(i+1)}\left(\frac{p_kp_{k+1}}{p_k+p_{k+2}}\right).
\end{equation}

This is saying that, a braid move $(...i,i+1,i...)\leftrightarrow(...i+1,i,i+1,...)$ gives a change of coordinates in our parametrization (see also \cite[Equation 2.10]{DoubleBruhatCells}).

On the other hand, if we perform a same-row column swap, the change of column and row weights in Figure~\ref{fig:consecutive_col_swap} gives us the transition map on the intersection of the two corresponding tori. The key matrix identity appearing here is, whenever $pq+1\neq 0$:

\begin{equation}\label{eqn: column_swap_matrix_id}
    e_{-i}(p)e_{i}(q) = e_{i}\left(\frac{q}{1+pq}\right)e_{-i}(p(1+pq))\cdot (1+pq)^{-\alpha^\vee_i}
\end{equation}
which is essentially (2.11) in ~\cite{DoubleBruhatCells}, except that we pushed the diagonal matrix to the right end to satisfy our convention of parameterizing the unique representative of each flag. We now give a derivation of this transition map in our framework of double Bott-Samelson varieties.

We start with a triangulation as the solid lines in Figure~\ref{fig:change_of_coords_square_move}, and impose that $A^{k-1}, A_l$ are in general position. 

\begin{figure}[h!]
    \centering
    \tikzset{every picture/.style={line width=0.75pt}} 

\begin{tikzpicture}[x=0.75pt,y=0.75pt,yscale=-1,xscale=1]

\draw    (141,83) -- (141,131.4) ;
\draw    (314,84) -- (314,132.4) ;
\draw    (200,68) -- (260.45,68) ;
\draw [shift={(262.45,68)}, rotate = 180] [color={rgb, 255:red, 0; green, 0; blue, 0 }  ][line width=0.75]    (10.93,-3.29) .. controls (6.95,-1.4) and (3.31,-0.3) .. (0,0) .. controls (3.31,0.3) and (6.95,1.4) .. (10.93,3.29)   ;
\draw    (64.18,66) -- (86.45,66) ;
\draw [shift={(88.45,66)}, rotate = 180] [color={rgb, 255:red, 0; green, 0; blue, 0 }  ][line width=0.75]    (10.93,-3.29) .. controls (6.95,-1.4) and (3.31,-0.3) .. (0,0) .. controls (3.31,0.3) and (6.95,1.4) .. (10.93,3.29)   ;
\draw    (65.18,144) -- (87.45,144) ;
\draw [shift={(89.45,144)}, rotate = 180] [color={rgb, 255:red, 0; green, 0; blue, 0 }  ][line width=0.75]    (10.93,-3.29) .. controls (6.95,-1.4) and (3.31,-0.3) .. (0,0) .. controls (3.31,0.3) and (6.95,1.4) .. (10.93,3.29)   ;
\draw    (390,67) -- (417.45,67) ;
\draw [shift={(419.45,67)}, rotate = 180] [color={rgb, 255:red, 0; green, 0; blue, 0 }  ][line width=0.75]    (10.93,-3.29) .. controls (6.95,-1.4) and (3.31,-0.3) .. (0,0) .. controls (3.31,0.3) and (6.95,1.4) .. (10.93,3.29)   ;
\draw    (440,147) -- (460,147) ;
\draw [shift={(462,147)}, rotate = 180] [color={rgb, 255:red, 0; green, 0; blue, 0 }  ][line width=0.75]    (10.93,-3.29) .. controls (6.95,-1.4) and (3.31,-0.3) .. (0,0) .. controls (3.31,0.3) and (6.95,1.4) .. (10.93,3.29)   ;
\draw    (215,147) -- (260,147) ;
\draw [shift={(262.18,147)}, rotate = 180] [color={rgb, 255:red, 0; green, 0; blue, 0 }  ][line width=0.75]    (10.93,-3.29) .. controls (6.95,-1.4) and (3.31,-0.3) .. (0,0) .. controls (3.31,0.3) and (6.95,1.4) .. (10.93,3.29)   ;
\draw    (279,81) -- (173,132) ;
\draw [color={rgb, 255:red, 208; green, 2; blue, 27 }  ,draw opacity=1 ] [dash pattern={on 4.5pt off 4.5pt}]  (172,84) -- (285.5,135.68) ;

\draw (92,56.4) node [anchor=north west][inner sep=0.75pt]    {$A^{k-1} =gt_{k-1} U_{+}$};
\draw (92,135.4) node [anchor=north west][inner sep=0.75pt]    {$A_{l-1} =g( t'_{l-1})^{-1} U_{-}$};
\draw (265,58.4) node [anchor=north west][inner sep=0.75pt]    {$A^{k} =ge_{-i}( p_{k}) t_{k} U_{+}$};
\draw (263,137.4) node [anchor=north west][inner sep=0.75pt]    {$A_{l} =ge_{-i}( p_{k}) e_{i}( q_{l})( t'_{l})^{-1} U_{-}$};
\draw (227,50) node [anchor=north west][inner sep=0.75pt]    {$s_{i}$};
\draw (233,150) node [anchor=north west][inner sep=0.75pt]    {$s_{i}$};
\draw (40,64) node [anchor=north west][inner sep=0.75pt]    {$...$};
\draw (40,142) node [anchor=north west][inner sep=0.75pt]    {$...$};
\draw (423,64) node [anchor=north west][inner sep=0.75pt]    {$...$};
\draw (465,145) node [anchor=north west][inner sep=0.75pt]    {$...$};

\end{tikzpicture}
    \caption{The local picture of a triangulation corresponding to a square face.}
    \label{fig:change_of_coords_square_move}
    \end{figure}

Then, we get $\Delta_j(t'_le_i(-q_l)e_{-i}(-p_k)t_{k-1})\neq 0,\forall j\in [r+1]$, so $\Delta_j(e_i(-q_l)e_{-i}(-p_k))\neq 0$. Note that $$e_i(-q_l)e_{-i}(-p_k) = \phi_i\m{1&-q_l\\0&1\\}\phi_i\m{1&0\\-p_k&1} = \phi_i\m{1+p_kq_l&-q_l\\-p_k&1\\}$$

Thus the minors are non-vanishing if and only if $p_kq_l+1\neq 0$ which is exactly the condition we need to apply the same-row column swap.

Moreover, once $p_kq_l+1\neq 0$, we find a $B_+B_-$ decomposition and push any diagonal matrix to the right end to rewrite the coset representative of $A_l$ as 
\begin{equation}
    \begin{split}
        A_l &= ge_{-i}(p_k)e_i(q_l)(t'_l)^{-1}U_-\\
        &= ge_{i}(\frac{q_l}{1+p_kq_l})e_{-i}(p_k(1+p_kq_l))(1+p_kq_l)^{-\alpha^\vee_i}(t'_l)^{-1}U_-\\
        &= ge_{i}(\frac{q_l}{1+p_kq_l})(1+p_kq_l)^{-\alpha^\vee_i}(t'_l)^{-1}U_-\\
    \end{split}
\end{equation}

On the other hand, the coset representative for $A^k$ can be rewritten as

\begin{equation}
    \begin{split}
        A^k &= ge_{-i}(p_k)t_kU_+\\
        &= ge_{-i}(p_k)e_i(q_l)e_i(-q_l)t_kU_+\\
        &= ge_{i}(\frac{q_l}{1+p_kq_l})e_{-i}(p_k(1+p_kq_l))(1+p_kq_l)^{-\alpha^\vee_i}t_ke_i(-t^{\alpha_i}_kq_l)U_+\\
        &= ge_{i}(\frac{q_l}{1+p_kq_l})e_{-i}(p_k(1+p_kq_l))[(1+p_kq_l)^{-\alpha^\vee_i}t_k]U_+\\
    \end{split}
\end{equation}

Note that the parameters appearing in the Chevalley generators are exactly the same as the new column weights obtained from same-row column swap in Figure~\ref{fig:same_row_swap}.

For any subsequent flag, it has a unique right-parametrized coset representative, $$\cdots e_{-i}(p_k)e_{i}(q_l)e_{\pm i_{1}}(c_{i_1})\cdots e_{\pm i_{N}}(c_{i_N})\cdots t_{j}.$$ Writing $1 + p_kq_l=y$, the matrix commutation corresponding to the above local transition map gives

\begin{equation*}
    \begin{split}
        &\cdots e_{-i}(p_k)e_{i}(q_l)e_{\pm i_{1}}(c_{i_1})\cdots e_{\pm i_{N}}(c_{i_N})\cdots t_{j}\\
        &=\cdots e_{i}(\frac{q_l}{y})e_{-i}(p_k y)(y)^{-\alpha^\vee_i}e_{\pm i_{1}}(c_{i_1})\cdots e_{\pm i_{N}}(c_{i_N})t_{j}\\
        &=\cdots e_{i}(\frac{q_l}{y})e_{-i}(p_k y)e_{\pm i_{1}}((y)^{\mp \langle\alpha^{\vee}_i, \alpha_{i_1}\rangle}c_{i_1})\cdots e_{\pm i_{N}}((y)^{\mp \langle\alpha^{\vee}_i, \alpha_{i_N}\rangle}c_{i_N})[(y)^{-\alpha^\vee_i}t_{j}]\\
    \end{split}
\end{equation*}


where $$\langle \alpha^\vee_i, \alpha_j\rangle =\begin{cases}
    2,\:\:\text{if}\:\:i=j\\
    -1,\:\:\text{if}\:\:j=i-1\:\:\text{or}\:\:j=i+1\\
    0,\:\:\text{otherwise}\\
\end{cases}$$

Thus, the above computation shows the decoration of $A^{m}$ changes by $d\mapsto (1+p_kq_l)^{-\alpha^\vee_i}d$, matching up the change in weights of the last horizontal edges on rows $i,i+1$ in Figure~\ref{fig:same_row_swap}.

Lastly, note that the change in the parameter inside each subsequent Chevalley generator, $c\mapsto (1+p_kq_l)^{\mp \langle\alpha^{\vee}_i, \alpha_{i_1}\rangle}c$ also matches the change in the weight of the corresponding vertical edge under gauge transformations. See Figure~\ref{fig:col_swap_change_of_params}. 

\begin{figure}[h!]
    \centering
    \includestandalone[mode=tex, width=\textwidth]
    {figures/col_swap_change_of_params}
    \caption{Examples of the correspondence between the change in edge weights and in Chevalley parameters.}
    \label{fig:col_swap_change_of_params}
    \end{figure}


\section{Generalized Matchings}\label{sec: gen_matching}

We now present a generalization of \cite{Muller_Speyer_2017}'s minimal matching construction in the setting of plabic fences. In an upcoming work of the second author, this construction is further generalized to the setting of any plabic graphs whose trips have no closed loops.
\begin{lem}\label{rem:monotonicity}
    In a plabic fence $G_\beta$, any strand $\gamma^+_e$ (or $\gamma^-_e$) that travels left along a horizontal $e$ or turns left at the end point of a vertical $e$ gives a strand in the braid presentation of the top-word $\beta^+$ in its reverse direction. Similarly, any strand $\gamma^+_e$ (or $\gamma^-_e$) that travels right along a horizontal $e$ or turns right at the end point of a vertical $e$ gives a strand in the braid presentation of the bottom-word $\beta^-$ in its original direction. Hence, each strand travels only to the right or only to the left in the horizontal direction (but it also travels vertically).
\end{lem}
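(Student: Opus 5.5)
The plan is a local analysis of the trip rules at the vertices of $G_\beta$, using only Definition~\ref{def: fence_quiver}: trips turn maximally left at white vertices and maximally right at black vertices. Every internal vertex of $G_\beta$ has degree $3$. It lies on some horizontal line, meets a horizontal edge on its left and one on its right, and meets a single vertical edge $e_j$ going either up or down. We first tabulate the four vertex types: white with the vertical edge going down, white with it going up, and the same two for black. For each type and each incoming direction (from the left, from the right, from above, from below) we record the outgoing direction. For example, at a white vertex whose vertical edge goes down, a trip arriving from the left turns maximally left, which is upward; there is no upward edge, so it continues to the right. A trip arriving from the right turns left, so it goes down. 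A trip arriving from below turns left, so it goes to the left.

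The key observation from this table concerns the columns. By Definition~\ref{def: fence_quiver}, a column $e_j$ with $i_j>0$ joins a white vertex on top to a black vertex below, and a column with $i_j<0$ joins a black vertex on top to a white vertex below.

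First take a positive column $e_j$. A trip coming down the column from the white top vertex reaches the black bottom vertex and turns maximally right, which sends it to the left along row $i_j+1$. A trip going up the column reaches the white top vertex and turns left, which again sends it to the left. The horizontal traffic entering this column is also consistent with leftward motion. A trip moving left along row $i_j+1$ that reaches the black bottom vertex turns right and goes up the column. It then reaches the white top vertex and turns left, leaving leftward along row $i_j$. Hence every trip passing through a positive column moves leftward and changes rows between $i_j$ and $i_j+1$. That is exactly a crossing of a strand in a wiring diagram for $\beta^+$, read from right to left.

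A trip moving right that meets this column, from the right side of the top vertex or the left side of the bottom vertex, simply passes straight through horizontally. The table confirms this: at the white top vertex, a trip arriving from the left goes right.

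Symmetrically, a negative column routes only rightward-moving trips. They switch rows between $-i_j$ and $-i_j+1$, which gives a crossing of $\beta^-$ read left to right, while leftward-moving trips pass straight through.

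The last step is to check that direction is preserved along the whole trip. At each vertex, the table shows that a trip arriving horizontally from direction $d$ either exits horizontally still moving in direction $d$, or takes the vertical edge. In the second case the analysis of the opposite endpoint shows it exits in direction $d$ again. Boundary vertices are only endpoints of trips and need no check.

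By induction along the trip, a strand that ever travels left along a horizontal edge, or turns left off a vertical edge, keeps moving left forever. Its sequence of row changes is the sequence of positive columns it meets, read right to left, so it traces a strand of the braid diagram of $\beta^+$ in reverse. The rightward case is identical with $\beta^-$. This gives monotonicity in the horizontal direction.

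The only real work is the bookkeeping in the eight-case table: fixing the cyclic orientation of the three edges at each vertex type and making sure that "maximally left" and "maximally right" are read correctly in the planar embedding, with rows indexed top to bottom. I expect no conceptual obstacle beyond this. To avoid sign errors, I would verify the table against the trips in the fence of Example~\ref{exem: weighted_fence}.
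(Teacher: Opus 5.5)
Your proposal is correct and follows essentially the paper's argument: a local analysis of the turning rules at the endpoints of the columns, showing that leftward trips switch lines exactly at white-over-black columns and pass straight through black-over-white ones (and symmetrically for rightward trips). It is in fact a bit more thorough than the paper's proof, which only treats a leftward trip arriving at a white vertex and omits the black-vertex arrivals you check.

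One small correction: it is not true that every internal vertex has degree $3$. Bipartiteness forces degree-$2$ vertices on the horizontal lines (as in Figure~\ref{fig:weighted_fence}). A trip passes straight through such a vertex in the same direction, which the paper notes explicitly, so you should add this as one more line in your case check.
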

\begin{proof}
    Consider a strand traveling left on a horizontal edge on the $i$-th line. If it visits a degree $2$ vertex, it stays on the $i$-th line and travels left. If it visits a degree $3$ white vertex (i.e. an endpoint of a vertical edge $e$), there are two cases. Either the vertical edge $e$ corresponds to a letter $s_i$ in the top word $\beta^+$, in which case the strand turns left and travels through $e$ and turns right on the black endpoint of $e$, traveling leftward horizontally on the $(i+1)$-th line, or the vertical edge corresponds to $-s_{i-1}$ in $\beta^-$, in which case the strand turns left and stays traveling horizontally on the $i$-th line. Thus, the index of the horizontal line that the strand travels left on undergoes the change $i\mapsto s_j(i)$ if and only if it passes through a white-over-black vertical edge, hence the collection of all strands traveling left traces out the braid representation of $\beta^+$. Similar discussion can be made to show that strands traveling right trace out the braid representations of $\beta^-$. 
\end{proof}

\begin{rem}
    Plabic fences encode the similar information as the double wiring diagrams from ~\cite{DoubleBruhatCells}. The strands determined above follow the ``wires'' that make a crossing in some particular direction. If the plabic fence is reduced it will correspond to such a double wiring diagram. In this paper, since the braid words $\beta^+,\beta^-$ may not be reduced, $\gamma^+_e,\gamma^-_e$ may cross each other multiple times, making the plabic fence $G_\beta$ non-reduced. 
\end{rem}

\begin{defn}
    For each edge $e$, let the \textbf{positive strand} $\gamma^+_e$ and the \textbf{negative strand} $\gamma^-_e$ be the two trips starting from its middle point, traveling to its white and black endpoints, respectively, and ending at the boundary of the disk. We assign integer values $U(e,f)$ to faces in the following propagating way:

    \begin{enumerate}
        \item[(i)] $U(e,f) := 1$ for the face $f$ that is the right side neighbor of the edge $e$ oriented from white to black.
        \item[(ii)] If $f_1,f_2$ are neighboring faces separated by an edge $e'$, draw a (generic) arrow $(e')^*$ perpendicular to $e'$ from $f_1$ to $f_2$ that does not pass through the middle point of $e'$. Then:
        \begin{enumerate}
            \item[(a)] $U(e,f_1)=U(e,f_2)$ if $(e')^*$ does not cross $\gamma^+_e$ nor $\gamma^-_e$.
            \item[(b)] $U(e,f_1) - U(e,f_2) = 1$ (resp. $-1$) if $(e')^*$ only intersects one of $\gamma^+_e, \gamma^-_e$, and it crosses from the left to the right of $\gamma^-_e$ or from the right to the left of $\gamma^+_e$ (resp. from the right to the left of $\gamma^-_e$ or from the left to the right of $\gamma^+_e$).
            \item[(c)] $U(e,f_1) - U(e,f_2) = 2$ (resp. $-2$) if $(e')^*$ intersects both of $\gamma^+_e, \gamma^-_e$, and it crosses from the left to the right of $\gamma^-_e$ and from the right to the left of $\gamma^+_e$ (resp. from the right to the left of $\gamma^-_e$ and from the left to the right of $\gamma^+_e$).
        \end{enumerate}
        Note that because of Lemma~\ref{rem:monotonicity} these are the only possible cases.
    \end{enumerate}

\end{defn}

\begin{figure}[h!]
    \centering
    \includestandalone[mode=tex, width=0.6\textwidth]{figures/top_face_of_fence}
    \caption{Local picture of positive and negative strands near a horizontal (see top row) or a vertical (see bottom row) edge.}
    \label{fig:top_face}
    \end{figure}
\begin{lem}\label{lem:weight_on_initial_face}
    Let $f_0$ be the top face of the plabic fence $G_\beta$. Then
    $$U(e,f_0) = \begin{cases}
    1, \text{ if } e \text{ is horizontal and its white endpoint is on the right}\\
    0, \text{ otherwise}\\
    \end{cases}.$$
\end{lem}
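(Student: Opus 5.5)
The idea is to compute $U(e,f_0)$ by walking from $f_0$ to the face where $U(e,\cdot)=1$ along a well-chosen dual path, and reading off the jump rules (ii)(a)--(c) at each crossing. The key input is Lemma~\ref{rem:monotonicity}. It says each of $\gamma^+_e$ and $\gamma^-_e$ moves monotonically in the horizontal direction. Since both start at the midpoint of $e$ and leave it in opposite directions, one strand lies weakly to the left of that midpoint and the other weakly to the right. A generic vertical line avoiding vertical edges therefore meets the right-going strand exactly once if it lies to the right of the midpoint, and does not meet the left-going strand at all. It meets these strands only at horizontal edges, which is exactly where dual arrows $(e')^*$ cross. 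I will take the propagation rule as well defined, i.e.\ independent of the dual path, since the definition implicitly asserts this.

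The plan is as follows. Let $f_1$ be the right-side neighbour of $e$ oriented from white to black, so $U(e,f_1)=1$. Choose a vertical dual path $\delta$ going straight up from $f_1$ to $f_0$, placed just to the right (or just to the left, whichever is convenient in the case at hand) of the midpoint of $e$. Along $\delta$ only the row-crossings matter. By the paragraph above, $\delta$ crosses $\gamma^+_e\cup\gamma^-_e$ at most once. Hence $U(e,f_0)-U(e,f_1)\in\{0,\pm1\}$, and it is determined by whether the crossing occurs and by the side-to-side orientation of that single crossing.

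Then I split into cases.

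\emph{Vertical $e$.} Both strands leave $e$ through its endpoints and turn onto horizontal lines, one heading left and one heading right. This uses the turning rules at the degree-3 endpoints, as in the proof of Lemma~\ref{rem:monotonicity}. The face $f_1$ lies on one definite side of $e$ (left or right, depending on whether $e$ is white-over-black or black-over-white). I place $\delta$ on that side. The strand heading toward that side lies above $f_1$ along $\delta$ for one of the two colour orientations and below it for the other; in either case I check that it is crossed once with orientation giving a jump of $-1$, or not crossed with the start correction making the net value $0$. Either way $U(e,f_0)=0$.

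\emph{Horizontal $e$.} Here $f_1$ is the face directly above or below $e$, according to whether the white endpoint is on the left or the right. Placing $\delta$ through $e$'s own column, the only possible crossing is of $e$ itself, or none at all. This is decided by whether $f_1$ is above $e$, in which case $\delta$ never reaches line of $e$ again. Combined with the sign convention, this gives $1$ exactly when the white endpoint is on the right, and $0$ otherwise.

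The main obstacle is the sign bookkeeping in these local cases. I have to determine, near $e$, which of $\gamma^\pm_e$ goes left and which goes right, and on which side of each strand $f_0$ and $f_1$ lie, for each of the four colourings. I would settle this by drawing the four local pictures of Figure~\ref{fig:top_face} and applying rule (ii)(b) once in each. A secondary point is justifying that $\delta$ meets a monotone strand at most once even though the strand also runs along vertical edges. This holds because $\delta$ is chosen disjoint from all vertical edges, so the strand's horizontal coordinate passes the $x$-coordinate of $\delta$ only once, while the strand is travelling along a horizontal line.
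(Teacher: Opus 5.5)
Your strategy (walk straight up from the face $f_1$ where $U(e,f_1)=1$ to $f_0$, use Lemma~\ref{rem:monotonicity} to control crossings, then check one local sign) is essentially the paper's, which instead walks up from the face above the white endpoint of $e$. The horizontal case works; note that $f_1$ lies \emph{above} $e$ exactly when the white endpoint is on the right. The vertical case, however, rests on a false picture. The two strands of a vertical edge do not leave in opposite horizontal directions. If $e$ is white-over-black (a letter $i$ of $\beta^+$), $\gamma^+_e$ reaches the white top endpoint heading up and turns left, i.e.\ west along line $i$. Meanwhile $\gamma^-_e$ reaches the black bottom endpoint heading down and turns right, which is again west, along line $i+1$. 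For black-over-white both strands go east. They are the two wires crossing at that letter, exactly as in the proof of Lemma~\ref{rem:monotonicity}. So your key claim, that one strand lies weakly left and the other weakly right of the midpoint, holds only for horizontal $e$. A vertical line on the exit side meets \emph{both} strands, so ``$\delta$ crosses $\gamma^+_e\cup\gamma^-_e$ at most once'' does not follow from what you wrote. Your hedge ``or not crossed, with the start correction making the net value $0$'' is also incoherent: with no crossing you would get $U(e,f_0)=U(e,f_1)=1$.

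The repair is short. For vertical $e$, $f_1$ lies precisely on the exit side: left of $e$ in row $i$ for white-over-black, right of $e$ for black-over-white. The strand leaving through the top endpoint runs along line $i$ directly above $f_1$, and the other runs along line $i+1$ directly below it. By monotonicity these are the only points where the strands meet that column. Hence $\delta$ crosses exactly one strand, once, and $U$ drops from $1$ to $0$.

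When doing the sign bookkeeping, use the convention forced by rule (i) and the downstream-wedge picture: crossing $\gamma^+_e$ from its left to its right, or $\gamma^-_e$ from its right to its left, lowers $U$ by one. Rule (ii)(b) in the definition of $U(e,f)$, as printed, carries the opposite sign. Applying it literally, as you propose, would give $U(e,f_0)=2$ rather than $0$ in every ``otherwise'' case. The $d_1$ row of the matrix in Example~\ref{exmp: computing_gen_matching} confirms the decreasing convention.
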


\begin{proof}
    See Figure~\ref{fig:top_face}. The neighboring face on the top of the white endpoint gets assigned a weight of $1$ if and only if $e$ is a horizontal edge with white endpoint on its right. By Lemma~\ref{rem:monotonicity}, the strands $\gamma^+_e$ and $\gamma^{-}_e$ are monotone in the horizontal direction. Therefore, neither of $\gamma^+_e$ or $\gamma^{-}_e$ will visit the edge on the rows $1,2,...,i-1$ that is right on top of the middle point of $e$. Thus the top face has the same weight as the neighboring face on the top of the white endpoint of $e$.
\end{proof}

\begin{cor}
    For any edge $e$ and any face $f$ of a plabic fence $G_\beta$, we have $U(e,f)\in \{-1,0,1\}$.
\end{cor}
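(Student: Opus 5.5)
Since $U(e,\cdot)$ is defined by propagation from the face on the right of $e$, the approach is to compute $U(e,f)$ directly from the top face $f_0$. By Lemma~\ref{lem:weight_on_initial_face}, $U(e,f_0)\in\{0,1\}$. Fix a face $f$ in row $i$ and draw a vertical path $\delta$ from $f_0$ down to $f$. It should pass through a slice, so that it crosses only horizontal edges, namely those on lines $1,\dots,i$. Then $U(e,f)-U(e,f_0)$ is the signed count of crossings of $\delta$ with $\gamma^+_e$ and $\gamma^-_e$, using the sign conventions in (ii)(b),(c).

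The key step is to show that each of $\gamma^+_e$ and $\gamma^-_e$ crosses $\delta$ at most once. By Lemma~\ref{rem:monotonicity}, each strand is horizontally monotone. It moves only right, or only left, and it changes lines only along vertical edges, which $\delta$ avoids. A monotone strand therefore meets the vertical line $\delta$ at most once. Moreover, it meets $\delta$ only if $\delta$ lies on the side of $e$ toward which the strand travels.

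From here I would split into cases by the position of $\delta$ relative to $e$ and by the directions of the two strands.

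If both strands travel away from $\delta$, then $U(e,f)=U(e,f_0)$, which lies in $\{0,1\}$.

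Otherwise, consider a vertical $e$. One of $\gamma^\pm_e$ goes left and the other goes right, because at the two endpoints the strand turns left at white and right at black. So exactly one strand can reach $\delta$, and $U(e,f_0)=0$. Hence $U(e,f)\in\{-1,0,1\}$.

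Now consider a horizontal $e$. Here both strands leave $e$ in the same horizontal direction: one starts toward the white endpoint, the other toward the black endpoint. So they might both reach $\delta$, on the side opposite... wait, they start in opposite horizontal directions along $e$. Concretely, $\gamma^+_e$ heads toward the white end and $\gamma^-_e$ toward the black end, which are on opposite sides of the midpoint. So again at most one strand crosses $\delta$, and the total change is in $\{-1,0,1\}$.

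Combining this with $U(e,f_0)\in\{0,1\}$ gives $U(e,f)\in\{-1,0,1,2\}$. The only remaining worry is the value $2$.

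To rule it out, note that $U(e,f_0)=1$ only when $e$ is horizontal with its white endpoint on the right. In that case $\gamma^+_e$ goes right and $\gamma^-_e$ goes left. I would check from the local picture in Figure~\ref{fig:top_face} that each crossing contributes $-1$. For a $\delta$ to the right, the crossing $\gamma^+_e$ travels rightward, and $\delta$ goes downward, so it crosses from the left of $\gamma^+_e$ to its right, giving $-1$ by (b). For a $\delta$ to the left, $\gamma^-_e$ travels leftward, so a downward $\delta$ crosses it from right to left, again $-1$.

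If $U(e,f_0)=0$, I need the change to be at least $-1$, which the at-most-one-crossing bound already gives.

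The main obstacle is the sign bookkeeping in this last step. To handle it, I would fix an orientation convention once, namely downward $\delta$ and the left/right of a strand relative to its direction of travel, and verify the four direction combinations against Figure~\ref{fig:top_face}. One also has to make sure $\delta$ can be chosen to avoid vertical edges and the midpoint of $e$. Any two such choices give the same answer, because $U$ is well defined.
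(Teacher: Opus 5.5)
Your strategy is the paper's: a vertical path $\delta$ from $f_0$ through a slice, Lemma~\ref{rem:monotonicity} to see that each of $\gamma^+_e,\gamma^-_e$ meets $\delta$ at most once, and $U(e,f_0)\in\{0,1\}$ from Lemma~\ref{lem:weight_on_initial_face}. Your horizontal-edge case is fine. The vertical-edge case, however, rests on a false claim: the two trips of a vertical edge leave in the \emph{same} horizontal direction.
\begin{itemize}
\item If $e$ is white-over-black, $\gamma^+_e$ reaches the top (white) endpoint moving up, and its maximal left turn there is west. $\gamma^-_e$ reaches the bottom (black) endpoint moving down, and its maximal right turn there is also west.
\item Symmetrically, both trips of a black-over-white edge go east.
\end{itemize}
You read ``turns left/right'' as ``goes left/right'', which is wrong for the trip that is moving down. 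This is why the two trips of $p_3$ in the paper's worked example can both pass through $p_2$, which lies to the left of $p_3$. Near the left boundary they run along lines $1$ and $3$, so a downward path there to the bottom face crosses both. Hence ``exactly one strand can reach $\delta$'' is wrong, and the at-most-once bound alone only gives a change in $\{-2,\dots,2\}$.

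The missing step is a cancellation. A downward path crosses two strands moving in the same horizontal direction from the same side: both right-to-left if they go west, both left-to-right if they go east. By (b), crossing $\gamma^+_e$ and crossing $\gamma^-_e$ in the same direction change $U(e,\cdot)$ by opposite amounts. So along $\delta$ the net change is $0$ if both or neither trip is crossed, and $\pm1$ if exactly one is. Together with $U(e,f_0)=0$ for vertical $e$, this gives $U(e,f)\in\{-1,0,1\}$. The sign bookkeeping you deferred to the horizontal case is therefore what actually carries the vertical case. The paper's one-line proof also leaves this cancellation implicit, but your case analysis replaces it with an incorrect claim.

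One caution on signs in the horizontal case. The convention forced by (i) and Lemma~\ref{lem:weight_on_initial_face} puts the value-$1$ face to the left of $\gamma^+_e$ and to the right of $\gamma^-_e$, so crossing $\gamma^+_e$ from left to right lowers $U$ by $1$. Your computation uses this convention, but (b) as printed attaches the opposite sign to $U(e,f_1)-U(e,f_2)$. Justify the sign from (i) rather than quoting ``$-1$ by (b)''.
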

\begin{proof}
    Consider a straight vertical line path $\alpha$ from the face $f$ to the top face $f_0$. By monotonicity in Lemma~\ref{rem:monotonicity}, $\alpha$ crosses $\gamma^+_e$ at most once and crosses $\gamma^-_e$ at most once. Thus, $U(e,f_0) - U(e,f) \in \{1,0\}$ if $e$ is a horizontal edge with white endpoint on its right, and $U(e,f_0) - U(e,f) \in \{1,0,-1\}$ otherwise. 
\end{proof}

\begin{defn}
    Let $G_\beta$ be a plabic fence with edge weights $\{w(e)\}_{e\in E}$, where $w(e)\in \mathbb{C}^*$. The \textbf{generalized matching variable} associated to each face $f$ is defined as $$M_f:= \prod_{e\in E}(w(e))^{-U(e,f)}.$$
\end{defn}

\begin{prop}
    For any fixed internal vertex $v$ and any fixed face $f$ of the plabic fence $G_\beta$, we have $$\sum_{e\sim v}U(e,f) = 1.$$
    In particular, gauge transformation by $\lambda\in\mathbb{C}^*$ multiplies all the $M_f$'s simultaneously by $\lambda^{-1}$.
\end{prop}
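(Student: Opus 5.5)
Fix an internal vertex $v$ and a face $f$. I will show that $\Phi_v(f):=\sum_{e\sim v}U(e,f)$ does not depend on $f$, and then evaluate it at the top face $f_0$ using Lemma~\ref{lem:weight_on_initial_face}.

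\textbf{Step 1: independence of $f$.} Let $f_1,f_2$ be adjacent faces separated by an edge $e'$, with a dual arrow $(e')^*$ from $f_1$ to $f_2$. By the propagation rule, $U(e,f_1)-U(e,f_2)$ is a signed count of crossings of $(e')^*$ with $\gamma^+_e$ and $\gamma^-_e$. Here a crossing of $\gamma^-_e$ from left to right counts $+1$, and a crossing of $\gamma^+_e$ from right to left counts $+1$. Orient $\gamma^+_e$ backwards and concatenate it with $\gamma^-_e$. This gives a single oriented boundary-to-boundary path $\Gamma_e=\overline{\gamma^+_e}\cdot\gamma^-_e$, running through the midpoint of $e$ from the white endpoint side to the black endpoint side. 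Then $U(e,f_1)-U(e,f_2)$ is the algebraic intersection number of $(e')^*$ with $\Gamma_e$.

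Summing over the edges $e\sim v$, the difference $\Phi_v(f_1)-\Phi_v(f_2)$ is the algebraic intersection of $(e')^*$ with the chain $\sum_{e\sim v}\Gamma_e$. The key claim is that this chain is null-homologous relative to the boundary, so that its pieces cancel pairwise.

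To see this, note that every trip through $v$ enters along some edge $e_a$ and leaves along the next edge in the cyclic rotation at $v$. Suppose $v$ is white. The trip from $e$ heading toward $v$ is $\gamma^+_e$, which leaves $v$ along the neighbouring edge $e_{+}$. The trip $\gamma^-_{e_+}$ starts at the midpoint of $e_+$ and heads away from $v$ toward its black endpoint, so it follows exactly the same route. Hence the tail of $\gamma^+_e$ beyond the midpoint of $e_+$ coincides with $\gamma^-_{e_+}$. In $\Gamma_e$ that tail appears reversed, while in $\Gamma_{e_+}$ it appears forward, so the two contributions cancel. What remains is the small cycle through the midpoints of the edges around $v$, which is contractible near $v$ and meets no generic dual arrow nontrivially. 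The black case is symmetric. So $\Phi_v$ is constant.

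\textbf{Step 2: evaluation at $f_0$.} Let $v$ be white of degree 3 on row $i$, with horizontal neighbours $h_L,h_R$ and vertical edge $e$. By Lemma~\ref{lem:weight_on_initial_face}, the only edge contributing is the horizontal edge whose white endpoint is on its right, namely $h_L$. This gives $\Phi_v(f_0)=1$. A white degree-2 vertex similarly has exactly one such edge. For a black vertex $v$, the left horizontal neighbour has its white endpoint on the left, so it contributes $0$. The right horizontal neighbour has its white endpoint on the right, so it contributes $1$. The vertical edge contributes $0$. If the right neighbour is a boundary vertex, the edge still has white-right orientation only if the lemma's convention applies, and I will check this edge case with care. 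Either way $\Phi_v(f_0)=1$.

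The gauge statement then follows immediately. Scaling all edges at $v$ by $\lambda$ multiplies $M_f$ by $\lambda^{-\sum_{e\sim v}U(e,f)}=\lambda^{-1}$.

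\textbf{Main obstacle.} The main difficulty is Step 1: making the cancellation of strand segments rigorous when trips are non-reduced and cross each other or themselves. I would handle this by working locally, edge by edge. For each dual arrow $(e')^*$, I would verify the cancellation directly, using the fact that rule (ii) depends only on local crossings with the specified trips, and the fact that two trips through a common oriented edge coincide from that point on. The sign conventions (left/right for $\gamma^\pm$) must be checked against rule (b) so that the reversal of $\gamma^+$ really yields opposite signs.
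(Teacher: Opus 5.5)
Your proposal is correct and follows the paper's own argument. You evaluate $\sum_{e\sim v}U(e,f_0)$ at the top face via Lemma~\ref{lem:weight_on_initial_face}, using that each internal vertex has exactly one horizontal edge with its white endpoint on the right. You then show independence of $f$ by pairing $\gamma^+_e$ with $\gamma^-_{e_+}$ (resp.\ $\gamma^-_e$ with the $\gamma^+$ of the next edge at a black vertex), whose tails coincide and contribute opposite signs across any dual arrow.

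The boundary case you flag in Step~2 is vacuous: boundary vertices are black, so a black internal vertex never has a boundary neighbour. Your edge-by-edge treatment of the segments between $v$ and the midpoints is slightly more careful than the paper's, which only tracks the overlapping tails along a vertical path.
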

\begin{figure}[h!]
    \centering
    \includestandalone[mode=tex, width=0.6\textwidth]{figures/gauge_lemma_deg_3}
    \caption{Local picture of positive and negative strands on edges incident to a degree $3$ vertex $v$}
    \label{fig:gauge_deg_3}
    \end{figure}
\begin{proof}
    First, note that the conclusion is true for the top face $f = f_0$, because each internal vertex is incident to exactly one horizontal edge $e$ with white endpoint on its right.

    Now take $f$ to be any face, and $\alpha$ to be a vertical straight line path from $f_0$ to $f$, not going through vertical edges of the plabic fence. We argue $$\sum_{e\sim v}U(e,f) = \sum_{e\sim v}U(e,f_0).$$
    
    See Figure~\ref{fig:gauge_deg_3} for an example of the neighboring edges of a degree $3$ vertex. Label the edges incident to $v$ as $e_1,...,e_{d}$ counterclockwise around $v$, where $d = \text{deg}(v)\geq 2$. Then, the strands $\gamma^+_{e_i}$ and $\gamma^-_{e_{i-1}}$ overlap after both of them exit the edges $e_i,e_{i-1}$, respectively. Thus $\alpha$ crosses $\gamma^+_{e_i}$ if and only if it crosses $\gamma^-_{e_{i-1}}$ in the same way. Notice that $\gamma^+_{e_i}$ and $\gamma^-_{e_{i-1}}$ have opposite effects on face weights when we cross from the left side to the right. Thus, the sum $\sum_{e\sim v}U(e,f)$ remains the same when we cross from one side of $\gamma^+_{e_i}$ (and same side of $\gamma^-_{e_{i-1}}$) to the other. 
\end{proof}

\begin{lem}\label{lem: face_monomial_top_bottom}
    Let $G_\beta$ be a right-canonically weighted plabic fence. Suppose that the edge weights of the rightmost horizontal edges multiply to one. Let $f,f'$ denote the top and bottom faces of the fence, respectively. Then $M_f = M_{f'} = 1.$
\end{lem}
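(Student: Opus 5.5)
The plan is to use Lemma~\ref{lem:weight_on_initial_face} directly for the top face $f$, and to prove and apply its mirror image for the bottom face $f'$.

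For the top face, Lemma~\ref{lem:weight_on_initial_face} says $U(e,f)=1$ exactly when $e$ is horizontal with its white endpoint on the right, and $U(e,f)=0$ otherwise. Hence
\[
M_f=\prod_{e\ \text{horizontal, white endpoint on the right}} w(e)^{-1}.
\]
In a right-canonically weighted fence every horizontal edge has weight $1$, except the rightmost horizontal edge on each line. The rightmost edge on line $i$ joins a white internal vertex to a black boundary vertex, since all boundary vertices are black. So its white endpoint is on the left, unless the line contains no internal vertex at all; we treat that degenerate case separately below. Every factor in the product therefore equals $1$, and $M_f=1$.

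For the bottom face I would prove the analogue of Lemma~\ref{lem:weight_on_initial_face} by the same argument, using the vertical path from the bottom face upward. By the monotonicity of Lemma~\ref{rem:monotonicity}, the strands $\gamma^\pm_e$ never revisit the portion of the rows below $e$ lying directly beneath the midpoint of $e$. So $U(e,f')$ equals the value of $U(e,\cdot)$ on the face immediately below $e$, if $e$ lies on a line adjacent to $f'$. Otherwise it equals the value on the face directly beneath the midpoint of $e$ in the next row down, which is obtained by crossing no strand.

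Reading off the local pictures in Figure~\ref{fig:top_face} with the orientation reversed, I expect the following. The face below a horizontal edge is its right-side neighbour (orienting $e$ from white to black) exactly when the white endpoint is on the left, and in that case $U=1$. For vertical edges and the remaining horizontal edges, $U=0$. This gives
\[
M_{f'}=\prod_{e\ \text{horizontal, white endpoint on the left}} w(e)^{-1}.
\]
The rightmost horizontal edges all have the white endpoint on the left, as noted above. Every other horizontal edge has weight $1$. Hence $M_{f'}=\prod_i d_i = 1$ by the hypothesis that the rightmost weights multiply to one.

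The main obstacle is making the bottom-face analogue rigorous. Two points need care.

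First, along the upward vertical path from $f'$, the strands may cross it. For instance, $\gamma^-_e$ on a vertical edge in row $i$ exits downward and then turns horizontally. So I must check that the net jump computed by rule (ii) is consistent with $U(e,\cdot)=1$ on the right neighbour of $e$. I would verify this by the same case analysis as Figure~\ref{fig:top_face}, splitting into horizontal edges and vertical edges of each colour type.

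Second, the degenerate case: if a line has no internal vertices, its single horizontal edge joins two boundary vertices. I would note that this cannot occur whenever every row index appears in $\beta$, and otherwise exclude it or treat it trivially.

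An alternative cross-check for the bottom face uses the Proposition $\sum_{e\sim v}U(e,f)=1$. Summing it over all internal vertices relates the total weights counted at $f$ and at $f'$. This is consistent with the row-by-row count above.
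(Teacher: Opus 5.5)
Your proposal is correct and follows the paper's route. The top face comes directly from Lemma~\ref{lem:weight_on_initial_face}; the bottom face comes from the mirrored argument, giving $U(e_i,f')=1$ for each rightmost horizontal edge $e_i$ and $U(e,f')=0$ for vertical edges, hence $M_{f'}=\prod_i w(e_i)^{-1}=1$. The case check you defer does go through: for a vertical edge, run the downward path on the side of $e$ opposite to the direction in which both $\gamma^\pm_e$ travel, so that no strand is crossed. The degenerate empty-line case cannot occur, because every boundary vertex must be adjacent to a white internal vertex.
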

\begin{proof}
    Since all the rightmost horizontal edges have their white endpoints on the left, by the proof of Lemma~\ref{lem:weight_on_initial_face}, we have $U(e_i,f) = 0$, $U(e_i,f') = 1$ for $e_i$ the rightmost horizontal edge on the $i$-th row, and $U(e,f) = U(e,f') = 0$ for any vertical edge $e$. Thus, $M_f = \prod_{e\in E'}(w(e))^{0}=1$, where $E'$ denotes the edges with non-trivial weights. And $M_{f'} = w(e_1)^{-1}w(e_2)^{-1}\cdots w(e_{r+1})^{-1} = 1$. 
\end{proof}
\begin{exmp}\label{exmp: computing_gen_matching}
    Continuing from Example~\ref{exem: weighted_fence}, we now assume $d_1d_2d_3 = 1$. We carry out the trip construction for each weighted edge as in Figures~\ref{fig:face_weights_1} and \ref{fig:face_weights_2}. One should note that for most of the edges the strands still form a well-defined downstream region, with the only exception being the vertical edge with weight $p_3$, where the two strands make a bad double crossing at the edge $p_2$.
    \begin{figure}[h!]
    \centering
    \includestandalone[mode=tex, width=1.0\textwidth]{figures/face_weights_1}
    \caption{Face weights for the edges $q_1,q_2,p_1,p_2$. Unlabeled faces have weight $0$}
    \label{fig:face_weights_1}
    \end{figure}
    
    \begin{figure}[h!]
    \centering
    \includestandalone[mode=tex, width=1.0\textwidth]{figures/face_weights_2}
    \caption{Face weights for edges $q_3,p_3,d_1,d_2,d_3$. Unlabeled faces have weight $0$.}
    \label{fig:face_weights_2}
    \end{figure}

    Thus, we obtain the matrix $U = (U(e,f))_{e\in E', f\in F}$ as follows: 
    $$U = \m{0&1&1&0&0&0&1&1\\1&1&0&0&0&0&0&0\\1&1&0&0&1&0&0&0\\0&0&0&1&0&0&1&1\\0&0&0&1&0&0&0&0\\-1&-1&0&0&-1&1&1&0\\0&0&1&1&1&1&1&1\\1&1&0&0&1&0&0&1\\0&0&0&0&0&1&1&0\\}$$
    where the rows are indexed by edges $q_1,p_1,p_2,q_2,q_3,p_3,d_1,d_2,d_3$ from top to bottom, and the columns are indexed by the faces $\binom{1}{0},\binom{1}{1},\binom{1}{2},\binom{1}{3}, \binom{2}{0},\binom{2}{1},\binom{2}{2},\binom{2}{3}$ from left to right, where the notation $\binom{i}{a}$ means the $a$-th face indexed from left to right, on the row in between the $i$-th and $(i+1)$-th horizontal path. Thus, for example, for $f = \binom{1}{1}$, reading the 2nd column gives us $M_{\binom{1}{1}} = q^{-1}_1p^{-1}_1p^{-1}_2p_3d_2$. Taking each column to compute the corresponding generalized matching variable gives us the monomials in Figure~\ref{fig:gen_monomials}.
    
    \begin{figure}[h!]
    \centering
    \tikzset{every picture/.style={line width=0.75pt}} 

\begin{tikzpicture}[x=0.75pt,y=0.75pt,yscale=-1,xscale=1]

\draw    (26.01,96.14) -- (100.65,96.14) -- (162.98,96.14) -- (264.7,96.14) -- (427.11,96.14) ;
\draw    (24.78,148.21) -- (100.65,148.21) -- (190.88,148.21) -- (243.79,148.21) -- (294.23,148.21) -- (326.22,148.21) -- (428.34,148.21) ;
\draw    (24.78,200.28) -- (192.11,200.28) -- (377.89,200.28) -- (433.12,200.28) ;
\draw  [fill={rgb, 255:red, 255; green, 255; blue, 255 }  ,fill opacity=1 ][line width=1.5]  (93.68,200.28) .. controls (93.68,196.2) and (96.99,192.9) .. (101.07,192.9) .. controls (105.14,192.9) and (108.45,196.2) .. (108.45,200.28) .. controls (108.45,204.36) and (105.14,207.66) .. (101.07,207.66) .. controls (96.99,207.66) and (93.68,204.36) .. (93.68,200.28) -- cycle ;
\draw  [fill={rgb, 255:red, 0; green, 0; blue, 0 }  ,fill opacity=1 ][line width=1.5]  (85.89,96.14) .. controls (85.89,92.07) and (89.19,88.76) .. (93.27,88.76) .. controls (97.35,88.76) and (100.65,92.07) .. (100.65,96.14) .. controls (100.65,100.22) and (97.35,103.52) .. (93.27,103.52) .. controls (89.19,103.52) and (85.89,100.22) .. (85.89,96.14) -- cycle ;
\draw    (93.27,96.14) -- (93.27,148.21) ;
\draw  [fill={rgb, 255:red, 255; green, 255; blue, 255 }  ,fill opacity=1 ][line width=1.5]  (85.89,148.21) .. controls (85.89,144.13) and (89.19,140.83) .. (93.27,140.83) .. controls (97.35,140.83) and (100.65,144.13) .. (100.65,148.21) .. controls (100.65,152.29) and (97.35,155.59) .. (93.27,155.59) .. controls (89.19,155.59) and (85.89,152.29) .. (85.89,148.21) -- cycle ;
\draw  [fill={rgb, 255:red, 0; green, 0; blue, 0 }  ,fill opacity=1 ][line width=1.5]  (155.6,148.21) .. controls (155.6,144.13) and (158.9,140.83) .. (162.98,140.83) .. controls (167.06,140.83) and (170.36,144.13) .. (170.36,148.21) .. controls (170.36,152.29) and (167.06,155.59) .. (162.98,155.59) .. controls (158.9,155.59) and (155.6,152.29) .. (155.6,148.21) -- cycle ;
\draw    (162.98,96.14) -- (162.98,148.21) ;
\draw  [fill={rgb, 255:red, 255; green, 255; blue, 255 }  ,fill opacity=1 ][line width=1.5]  (155.6,96.14) .. controls (155.6,92.07) and (158.9,88.76) .. (162.98,88.76) .. controls (167.06,88.76) and (170.36,92.07) .. (170.36,96.14) .. controls (170.36,100.22) and (167.06,103.52) .. (162.98,103.52) .. controls (158.9,103.52) and (155.6,100.22) .. (155.6,96.14) -- cycle ;
\draw  [fill={rgb, 255:red, 0; green, 0; blue, 0 }  ,fill opacity=1 ][line width=1.5]  (177.35,200.28) .. controls (177.35,196.2) and (180.65,192.9) .. (184.73,192.9) .. controls (188.81,192.9) and (192.11,196.2) .. (192.11,200.28) .. controls (192.11,204.36) and (188.81,207.66) .. (184.73,207.66) .. controls (180.65,207.66) and (177.35,204.36) .. (177.35,200.28) -- cycle ;
\draw    (183.5,148.21) -- (183.5,200.28) ;
\draw  [fill={rgb, 255:red, 255; green, 255; blue, 255 }  ,fill opacity=1 ][line width=1.5]  (176.12,148.21) .. controls (176.12,144.13) and (179.42,140.83) .. (183.5,140.83) .. controls (187.58,140.83) and (190.88,144.13) .. (190.88,148.21) .. controls (190.88,152.29) and (187.58,155.59) .. (183.5,155.59) .. controls (179.42,155.59) and (176.12,152.29) .. (176.12,148.21) -- cycle ;
\draw  [fill={rgb, 255:red, 0; green, 0; blue, 0 }  ,fill opacity=1 ][line width=1.5]  (236.4,148.21) .. controls (236.4,144.13) and (239.71,140.83) .. (243.79,140.83) .. controls (247.86,140.83) and (251.17,144.13) .. (251.17,148.21) .. controls (251.17,152.29) and (247.86,155.59) .. (243.79,155.59) .. controls (239.71,155.59) and (236.4,152.29) .. (236.4,148.21) -- cycle ;
\draw    (243.79,148.21) -- (243.79,200.28) ;
\draw  [fill={rgb, 255:red, 255; green, 255; blue, 255 }  ,fill opacity=1 ][line width=1.5]  (236.4,200.28) .. controls (236.4,196.2) and (239.71,192.9) .. (243.79,192.9) .. controls (247.86,192.9) and (251.17,196.2) .. (251.17,200.28) .. controls (251.17,204.36) and (247.86,207.66) .. (243.79,207.66) .. controls (239.71,207.66) and (236.4,204.36) .. (236.4,200.28) -- cycle ;
\draw  [fill={rgb, 255:red, 0; green, 0; blue, 0 }  ,fill opacity=1 ][line width=1.5]  (257.32,96.14) .. controls (257.32,92.07) and (260.62,88.76) .. (264.7,88.76) .. controls (268.78,88.76) and (272.08,92.07) .. (272.08,96.14) .. controls (272.08,100.22) and (268.78,103.52) .. (264.7,103.52) .. controls (260.62,103.52) and (257.32,100.22) .. (257.32,96.14) -- cycle ;
\draw    (264.7,96.14) -- (264.7,148.21) ;
\draw  [fill={rgb, 255:red, 255; green, 255; blue, 255 }  ,fill opacity=1 ][line width=1.5]  (257.32,148.21) .. controls (257.32,144.13) and (260.62,140.83) .. (264.7,140.83) .. controls (268.78,140.83) and (272.08,144.13) .. (272.08,148.21) .. controls (272.08,152.29) and (268.78,155.59) .. (264.7,155.59) .. controls (260.62,155.59) and (257.32,152.29) .. (257.32,148.21) -- cycle ;
\draw  [fill={rgb, 255:red, 0; green, 0; blue, 0 }  ,fill opacity=1 ][line width=1.5]  (318.84,200.28) .. controls (318.84,196.2) and (322.14,192.9) .. (326.22,192.9) .. controls (330.3,192.9) and (333.6,196.2) .. (333.6,200.28) .. controls (333.6,204.36) and (330.3,207.66) .. (326.22,207.66) .. controls (322.14,207.66) and (318.84,204.36) .. (318.84,200.28) -- cycle ;
\draw    (326.22,148.21) -- (326.22,200.28) ;
\draw  [fill={rgb, 255:red, 255; green, 255; blue, 255 }  ,fill opacity=1 ][line width=1.5]  (318.84,148.21) .. controls (318.84,144.13) and (322.14,140.83) .. (326.22,140.83) .. controls (330.3,140.83) and (333.6,144.13) .. (333.6,148.21) .. controls (333.6,152.29) and (330.3,155.59) .. (326.22,155.59) .. controls (322.14,155.59) and (318.84,152.29) .. (318.84,148.21) -- cycle ;
\draw  [fill={rgb, 255:red, 0; green, 0; blue, 0 }  ,fill opacity=1 ][line width=1.5]  (286.85,148.21) .. controls (286.85,144.13) and (290.15,140.83) .. (294.23,140.83) .. controls (298.31,140.83) and (301.61,144.13) .. (301.61,148.21) .. controls (301.61,152.29) and (298.31,155.59) .. (294.23,155.59) .. controls (290.15,155.59) and (286.85,152.29) .. (286.85,148.21) -- cycle ;
\draw  [fill={rgb, 255:red, 255; green, 255; blue, 255 }  ,fill opacity=1 ][line width=1.5]  (59.56,96.14) .. controls (59.56,92.07) and (62.86,88.76) .. (66.94,88.76) .. controls (71.02,88.76) and (74.32,92.07) .. (74.32,96.14) .. controls (74.32,100.22) and (71.02,103.52) .. (66.94,103.52) .. controls (62.86,103.52) and (59.56,100.22) .. (59.56,96.14) -- cycle ;
\draw  [fill={rgb, 255:red, 255; green, 255; blue, 255 }  ,fill opacity=1 ][line width=1.5]  (368.05,96.14) .. controls (368.05,92.07) and (371.36,88.76) .. (375.43,88.76) .. controls (379.51,88.76) and (382.82,92.07) .. (382.82,96.14) .. controls (382.82,100.22) and (379.51,103.52) .. (375.43,103.52) .. controls (371.36,103.52) and (368.05,100.22) .. (368.05,96.14) -- cycle ;
\draw  [fill={rgb, 255:red, 255; green, 255; blue, 255 }  ,fill opacity=1 ][line width=1.5]  (369.28,200.28) .. controls (369.28,196.2) and (372.59,192.9) .. (376.66,192.9) .. controls (380.74,192.9) and (384.05,196.2) .. (384.05,200.28) .. controls (384.05,204.36) and (380.74,207.66) .. (376.66,207.66) .. controls (372.59,207.66) and (369.28,204.36) .. (369.28,200.28) -- cycle ;

\draw (26.47,97.63) node [anchor=north west][inner sep=0.75pt,scale=1.5]    {$\textcolor[rgb]{0.82,0.01,0.11}{\frac{p_{3} d_{2}}{p_{1} p_{2}}}$};
\draw (97,100) node [anchor=north west][inner sep=0.75pt, scale=1.5]    {$\textcolor[rgb]{0.82,0.01,0.11}{\frac{p_{3} d_{2}}{p_{1} q_{1} p_{2}}}$};
\draw (14.68,84.85) node [anchor=north west][inner sep=0.75pt]    {$1$};
\draw (14.68,137.76) node [anchor=north west][inner sep=0.75pt]    {$2$};
\draw (12.22,190.66) node [anchor=north west][inner sep=0.75pt]    {$3$};
\draw (194.47,98.55) node [anchor=north west][inner sep=0.75pt,scale=1.5]    {$\textcolor[rgb]{0.82,0.01,0.11}{\frac{d_{1}}{q_{1}}}$};
\draw (327.09,99.48) node [anchor=north west][inner sep=0.75pt,scale=1.5]    {$\textcolor[rgb]{0.82,0.01,0.11}{\frac{d_{1}}{q_{2} q_{3}}}$};
\draw (50,155) node [anchor=north west][inner sep=0.75pt,scale=1.5]    {$\textcolor[rgb]{0.82,0.01,0.11}{\frac{p_{3} d_{1} d_{2}}{p_{2}}}$};
\draw (196.89,153.39) node [anchor=north west][inner sep=0.75pt, scale=1.5]    {$\textcolor[rgb]{0.82,0.01,0.11}{\frac{d_{1} d_{3}}{p_{3}}}$};
\draw (254.78,154.32) node [anchor=north west][inner sep=0.75pt, scale=1.5]    {$\textcolor[rgb]{0.82,0.01,0.11}{\frac{d_{1} d_{3}}{q_{1} q_{2} p_{3}}}$};
\draw (346.17,151.54) node [anchor=north west][inner sep=0.75pt, scale=1.5]    {$\textcolor[rgb]{0.82,0.01,0.11}{\frac{d_{1} d_{2}}{q_{1} q_{2}}}$};

\end{tikzpicture}
    \caption{Generalized matching variables associated to faces.}
    \label{fig:gen_monomials}
    \end{figure}
\end{exmp}

We now state our main theorem. 

\begin{thm}\label{thm:matching_equals_minor}
Given a double braid word $\beta$, consider the right-canonically weighted associated plabic fence $G_\beta$. If we require that the rightmost horizontal-edge weights multiply to one, then these weights parameterize a torus in $\text{Conf}(\beta_+,\beta_-)$. Moreover, the generalized matching variables for the faces of $G_\beta$ recover the corresponding generalized minors in the torus $T(C_\beta)$.
\end{thm}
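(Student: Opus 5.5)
The first claim, that the weights parameterize a torus, is essentially Definition~\ref{def: canonical_parametrization} combined with Lemma~\ref{lem:torus_param} and Remark~\ref{rem: back_propagation_of_decoration}. Reading the vertical weights left to right gives the Chevalley factorization of every flag. The condition $d_1\cdots d_{r+1}=1$ makes $d\in T$, which fixes the decoration of $A^m$. Back-propagating with the involution of Remark~\ref{rem: back_propagation_of_decoration} then recovers every $t_k,t'_l$ uniquely. Injectivity and openness follow from the uniqueness statements in Lemma~\ref{lem:torus_param}, so this part is bookkeeping.

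For the main claim I would argue by induction on the length $N$ of $\beta$, adding one column at the right end. The idea is to compare a slice through a face with the slice just to its right. Write $\beta=\beta'\cdot(\pm i)$. For a face $f$ in row $j$ cut by the rightmost slice, use the standard representative from Lemma~\ref{lem:torus_param}: $x=X d$ and $y=d'Y^{-1}$, where $X$ and $Y$ are the products of Chevalley generators. The $j$-th generalized minor is then $\Delta_j(yx)$, and since $X,Y$ only contribute through the diagonal parts, this reduces to a monomial in $d$ and $t'$ of the form $\prod_{a\le j}(\text{entries})$. For the two flags on the last diagonal, $Y^{-1}X$ is trivial up to unipotent factors, so $\Delta_j=(t'd)^{\omega_j}$, with $t'$ the bottom-right decoration. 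The rightmost faces therefore have minors $(t'd)^{\omega_j}$. On the matching side, I would compute $U(e,f)$ for these faces directly: the last horizontal edges contribute $\prod_{a\le j} d_a$, via the bottom-face computation in Lemma~\ref{lem: face_monomial_top_bottom}, and the vertical edges contribute exactly the exponents that build $t'$ (as a product of $q$'s) from Example~\ref{exmp: computing_decorations}. This handles the rightmost faces.

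For interior faces, Proposition 3.21 of \cite{Shen_Weng_2021} (the slice independence discussed above) lets me evaluate $A_{\binom{j}{a}}$ on the slice immediately right of the face's right boundary column. Deleting all columns to the right of that slice gives a shorter fence $G_{\beta''}$, truncated at the slice, with new rightmost decorations $t_k$ and $t'_l$. By the rightmost-face case applied to $G_{\beta''}$, the minor is $(t'_l t_k)^{\omega_j}$. Here $t_k$ is obtained from $d$ by the back-propagation recursion $t\mapsto t(t^{\alpha_i}p)^{-\alpha_i^\vee}$ through the deleted upward columns. It remains to show that $M_f$ computed in $G_\beta$ equals this same monomial. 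The key step is a \emph{local} identity: appending one column changes $U(e,f)$ for the faces to its left exactly as the recursion changes exponents. Concretely, for an appended white-over-black column in row $i$ with weight $p$, I would check three things. First, $U(\text{new edge},f)$ equals $\langle\omega_j,\alpha_i^\vee\rangle$-type contributions, which are $\pm1$ or $0$. Second, the strands of the old final horizontal edges on rows $i,i+1$ are rerouted through the new column, so their $U$-values on faces to the left transform by the permutation $s_i$, matching $d\mapsto s_i$-twist. Third, no other edge's $U$-values change on faces left of the column. These checks use Lemma~\ref{rem:monotonicity}: strands are horizontally monotone, so a column at the far right only alters strands that reach it.

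The main obstacle is that third check, together with the sign bookkeeping in the non-reduced case. When strands double-cross, as for the edge $p_3$ in Example~\ref{exmp: computing_gen_matching}, the $-1$ values must be shown to arise precisely from the $p^{-1}$ factors in the back-propagation. My first attempt would be to track, for each old edge $e$, how $\gamma^\pm_e$ enters the new column, using the propagation rules (ii)(a)–(c) along a vertical path $\alpha$ from the top face, as in the gauge proposition. I would then verify that the net change in $U(e,f)$ equals the change of the exponent of $w(e)$ in $(t'_lt_k)^{\omega_j}$. An alternative, if this becomes messy, is to reduce via the moves: braid moves and column swaps correspond both to the matrix identities \eqref{eqn: Lusztig_relation} and \eqref{eqn: column_swap_matrix_id} and to mutations. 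However, this requires invariance of $M_f$ under moves, which is itself nontrivial, so I would prefer the direct inductive approach.
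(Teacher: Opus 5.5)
Your route differs from the paper's. It can be made to work, but as written its two load-bearing steps are asserted rather than proved.

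The paper never truncates at slices. It proves the gluing Lemma~\ref{lem:gluing_fences}: two fences whose interface matching variables agree can be glued after reweighting the interface edges. It then inducts by gluing the fence on the first $N-1$ columns to a one-column base case.

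Your plan instead makes every face the rightmost face of a truncation, and it needs two things:
(a) an append-one-column identity: for every face $f$ of $G_{\beta'}$, the variable $M_f$ computed in $G_{\beta'\cdot(\pm i)}$ with rightmost weights $d^{-1}$ equals $M_f$ computed in $G_{\beta'}$ with rightmost weights $t^{-1}$, where $t$ is the back-propagated decoration;
(b) the formula $M_f=(t'd)^{\omega_j}$ for rightmost faces.

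For (a), your three checks do not yet do the job:
\begin{itemize}
\item The first check holds only on faces adjacent to the new column. Further left, $U(p,f)$ is not a $\langle\omega_j,\alpha_i^\vee\rangle$-quantity: in Example~\ref{exmp: computing_gen_matching}, $U(p_3,\binom{1}{0})=-1$ although $p_3$ lies in row $2$.
\item The second check is phrased via the old final horizontal edges, which have weight $1$ in $G_\beta$ and contribute nothing.
\item The third check is left open as ``the main obstacle.''
\end{itemize}
For (b), you only point to Example~\ref{exmp: computing_decorations}.

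The missing idea for (a) is a strand-retracing observation, and it makes the non-reduced worry disappear. Append a white-over-black column $p$ in row $i$. To the left of it:
\begin{itemize}
\item $\gamma^+$ of the new rightmost edges on lines $i$ and $i+1$ retrace $\gamma^+$ of the old rightmost edges on lines $i+1$ and $i$, respectively;
\item $\gamma^+_p$ and $\gamma^-_p$ retrace $\gamma^+$ of the old rightmost edges on lines $i$ and $i+1$, respectively;
\item negative strands of rightmost edges end at the boundary at once;
\item every other strand is unchanged on the left, by Lemma~\ref{rem:monotonicity}.
\end{itemize}
All these edges have $U=0$ on the top face, so for $f$ left of the column
\[U(e^{\mathrm{new}}_i,f)=U(e^{\mathrm{old}}_{i+1},f),\quad U(e^{\mathrm{new}}_{i+1},f)=U(e^{\mathrm{old}}_{i},f),\quad U(p,f)=U(e^{\mathrm{old}}_{i},f)-U(e^{\mathrm{old}}_{i+1},f).\]
Hence the weights $d_i^{-1},d_{i+1}^{-1},p$ recombine into $p/d_{i+1}=(t_k)_i^{-1}$ and $(d_ip)^{-1}=(t_k)_{i+1}^{-1}$. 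This is exactly $t_k=d(d^{\alpha_i}p)^{-\alpha_i^\vee}$. The $-1$ values come from the difference, and double crossings never enter.

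For an appended black-over-white column $q$, both $\gamma^\pm_q$ run rightward, and the new rightmost strands retrace the old ones on the same lines. So nothing changes on the left.

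For (b), you need the following: $(t'_l)_a$ is the product of $q^{\pm1}$ over the black-over-white edges traversed by the rightward trip exiting on line $a$, with sign $+$ for downward and $-$ for upward traversal. This follows from the recursion for $t'$ by the same one-step induction. You then read off $U(q,\cdot)$ on the rightmost faces starting from the top face.

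With these two lemmas your argument closes, and it avoids handling the newly created face separately; without them it is only a plan. One minor correction: the slice you want is the rightmost one inside $f$, not the one to the right of its boundary column.
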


\begin{exmp}
    Following Example~\ref{exmp: computing_decorations}, we compute the generalized minors corresponding to each diagonal of the triangulation, and compare with the generalized matchings we computed in Example~\ref{exmp: computing_gen_matching} as follows:
    $$\begin{cases}
        A_{\binom{1}{0}} = \Delta_1(t_0) = d_2p_3p^{-1}_2p^{-1}_1 = M_{\binom{1}{0}}\\
        A_{\binom{2}{0}} = \Delta_2(t_0) = d_1d_2p_3p^{-1}_2 = M_{\binom{2}{0}}\\
        A_{\binom{1}{1}} = \Delta_1(t'_1e_1(-q_1)t_0) = \Delta_1(t'_1t_0e_1(-t^{-\alpha_1}_0q_1)) = \Delta_1(t'_1t_0) = d_2p_3p^{-1}_2p^{-1}_1q^{-1}_1 = M_{\binom{1}{1}}\\
        A_{\binom{1}{2}} = \Delta_1(t'_1e_{-1}(p_1)t_1) = \Delta_1(t'_1t_1e_{-1}(t^{\alpha_1}_1p_1)) = \Delta_1(t'_1t_1) = d_1q^{-1}_1 = M_{\binom{1}{2}}\\
        A_{\binom{2}{1}} = \Delta_2(t'_1e_{-1}(p_1)e_{-2}(p_2)t_2) = \Delta_2(t'_1t_2) = d_1d_3p^{-1}_3 = M_{\binom{2}{1}}\\
        A_{\binom{2}{2}} = \Delta_2(t'_2e_{2}(-q_2)t_2) = \Delta_2(t'_2t_2) = d_1d_3p^{-1}_3q^{-1}_1q^{-1}_2 = M_{\binom{2}{2}}\\
        A_{\binom{1}{3}} = \Delta_1(t'_3e_{1}(-q_3)e_{2}(-q_2)t_2) = \Delta_1(t'_3t_2) = d_1q^{-1}_2q^{-1}_3 = M_{\binom{1}{3}}\\
        A_{\binom{2}{3}} = \Delta_2(t'_3e_{-1}(p_3)d) = \Delta_2(t'_3d) = d_1d_2q^{-1}_1q^{-1}_2 = M_{\binom{2}{3}}\\
    \end{cases}$$
    This shows that they are equal.
\end{exmp}
Before the proof of the theorem, we introduce the following useful technical lemma. 

\begin{lem}\label{lem:gluing_fences}
Consider two right-canonically weighted $r$-row plabic fences $G_1$ and $G_2$, where the rightmost horizontal-edge weights of each graph multiply to one. Suppose that for each $i\in [r]$, the generalized matching variable on rightmost face of $G_1$ on the $i$-th row is equal to that on the leftmost face of $G_2$ on the $i$-th row. Let $G$ be the $r$-row plabic fence obtained by gluing the right end of $G_1$ to the left end of $G_2$. Then, there is a choice of edge weights on $G$, such that the generalized matching variables on the faces of $G$ on the left subgraph are equal to those on $G_1$, and similarly, the generalized matching variables on the faces of $G$ on the right subgraph are equal to those on $G_2$.
\end{lem}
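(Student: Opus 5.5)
The plan is to build the weighting on $G$ explicitly and then compare face by face, using monotonicity of strands (Lemma~\ref{rem:monotonicity}) to split every exponent $U_G(e,f)$ into a ``$G_1$-part'' and a ``$G_2$-part''.

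\textbf{Choice of weights.} Every vertical edge of $G$ keeps the weight it had in $G_1$ or in $G_2$. The rightmost horizontal edges of $G$ (coming from $G_2$) keep their $G_2$-weights. All other horizontal edges get weight $1$, including the edges at the gluing seam that were the rightmost edges of $G_1$ with weights $(d^{(1)}_i)^{-1}$. Then $G$ is right-canonically weighted and its rightmost weights multiply to one. If this naive choice fails, I would fall back to inserting, on the seam edges, the weights dictated by the decoration (Definition~\ref{def: canonical_parametrization} and Remark~\ref{rem: back_propagation_of_decoration}), followed by a gauge transformation.

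\textbf{Faces in the $G_2$ region.} Fix a face $f$ of $G$ lying in $G_2$.
\begin{itemize}
\item For an edge $e$ of the $G_2$ part, the strands $\gamma^\pm_e$ inside the $G_2$ region are literally the strands of $G_2$. Strands travelling left leave the $G_2$ region and never return, by Lemma~\ref{rem:monotonicity}. So a vertical path from the top face $f_0$ to $f$ crosses the same strands as in $G_2$. By Lemma~\ref{lem:weight_on_initial_face}, $U(e,f_0)$ is also the same in both graphs. Hence $U_G(e,f)=U_{G_2}(e,f)$.
\item For an edge $e$ of the $G_1$ part with nontrivial weight, any strand reaching the $G_2$ region travels right along rows. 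Its crossings of the vertical path from $f_0$ to $f$ should cancel in pairs, exactly as in the proof of Lemma~\ref{lem: face_monomial_top_bottom} for vertical edges. So the $G_1$-edges contribute trivially.
\end{itemize}
Together these would give $M^G_f=M^{G_2}_f$.

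\textbf{Faces in the $G_1$ region.} Fix a face $f$ of $G$ lying in $G_1$, in row $i$.
\begin{itemize}
\item Edges of the $G_1$ part give $U_G(e,f)=U_{G_1}(e,f)$ by the same monotonicity argument. In $G$, however, there is no longer a factor coming from the seam edges, which in $G_1$ carried the weights $(d^{(1)}_j)^{-1}$.
\item Edges of the $G_2$ part contribute through strands entering the $G_1$ region from the seam. I would show that this contribution depends only on the row in which each strand enters the seam. Concretely, I would aim for
\[M^G_f \;=\; M^{G_1}_f\cdot\prod_j \bigl(d^{(1)}_j\bigr)^{-c_j(f)}\cdot\prod_j\Bigl(M^{G_2}_{\text{left},j}\big/M^{G_2}_{\text{left},j-1}\Bigr)^{c_j(f)},\]
where $c_j(f)\in\{0,1\}$ records whether $f$ lies below the strand exiting $G_1$ along row $j$.
\item Applying the same computation to the rightmost faces of $G_1$ identifies $M^{G_1}_{\text{right},i}$ as a monomial in the $d^{(1)}_j$ alone. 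The ratio of consecutive such monomials is $\prod_{j\le i} (d^{(1)}_j)^{\pm1}$, using Lemma~\ref{lem: face_monomial_top_bottom} for the boundary rows. The hypothesis $M^{G_1}_{\text{right},i}=M^{G_2}_{\text{left},i}$ then makes the two correction factors cancel, giving $M^G_f=M^{G_1}_f$.
\end{itemize}
The vertex identity $\sum_{e\sim v}U(e,f)=1$ should be used to check that the telescoping is gauge-consistent.

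\textbf{Main obstacle.} The hard step is the bookkeeping for $G_2$-edges evaluated at $G_1$-faces. Strands entering the $G_1$ region get permuted by the top braid $\beta^+$ of $G_1$ and may double cross, so the claim that the contribution depends only on seam rows is not obvious. I would first try to prove it by induction on the number of columns of $G_1$, adding one vertical edge at a time and checking locally, via Figure~\ref{fig:top_face}, how $U$ changes across that column.
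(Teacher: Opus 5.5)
\textbf{There is a genuine gap: the weighting you propose for $G$ does not satisfy the lemma.} Your monotonicity argument for same-side contributions ($G_1$-edges on $G_1$-faces, $G_2$-edges on $G_2$-faces) is fine. The failure is in the cross terms.

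Take $r=1$. Let $G_1$ be a single black-over-white column of weight $q$ with rightmost weights $d_1^{-1},d_2^{-1}$. Let $G_2$ be a single white-over-black column of weight $p$ with rightmost weights $e_1^{-1},e_2^{-1}$. By the base cases in the proof of Theorem~\ref{thm:matching_equals_minor}:
\begin{itemize}
\item $G_1$ has face variables $d_1$ and $d_1q^{-1}$;
\item $G_2$ has face variables $e_2p^{-1}$ and $e_1$;
\item the hypothesis reads $d_1q^{-1}=e_2p^{-1}$.
\end{itemize}
The row of $G$ has faces $F_0,F_1,F_2$, where $F_1$ is the merged face (the seam only adds degree-two black vertices). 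With all seam edges of weight $1$, tracing trips gives
\[
M_{F_0}=e_2p^{-1},\qquad M_{F_1}=e_2p^{-1}q^{-1},\qquad M_{F_2}=e_1q^{-1},
\]
instead of the required $d_1$, $d_1q^{-1}$, $e_1$.

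Two of your claims break here.
\begin{itemize}
\item \emph{$G_1$-edges do contribute to $G_2$-faces.} Both trips of a black-over-white edge of $G_1$ travel rightward, along lines $i$ and $i+1$. They cross the seam and continue through $G_2$ following $\beta^-$, so every $G_2$-face between them has $U=1$. Here the $q$-edge $e$ has $U(e,F_2)=1$. The cancellation in Lemma~\ref{lem: face_monomial_top_bottom} holds only for the top and bottom faces.
\item \emph{$M^{G_1}_{\mathrm{right},i}$ is not a monomial in the $d^{(1)}_j$ alone.} Base Case II gives $d_1\cdots d_iq^{-1}$, so the hypothesis does not make your correction factors cancel.
\end{itemize}
Your fallback (inserting decoration-dictated seam weights) is where the actual content lies, but you never specify it.

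\textbf{The missing idea is to put nontrivial seam weights built from the matched face variables.} The paper's proof keeps $G_1$'s rightmost weights $d^{(1)}_j$ at the seam. It also multiplies the leftmost horizontal edge $e_{L,j}$ of $G_2$ on level $j$ by $M_{\binom{j-1}{0}}M^{-1}_{\binom{j}{0}}$, with $M_{\binom{0}{0}}=M_{\binom{r+1}{0}}=1$.

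The argument then runs as follows.
\begin{itemize}
\item Every strand from $G_1$ that passes the seam rightward along level $j$ afterwards runs parallel to $\gamma^+_{e_{L,j}}$.
\item Telescoping up from the bottom face (whose variable is $1$), the whole level-$j$ bundle accounts for the ratio $M_{\binom{j-1}{-1}}M^{-1}_{\binom{j}{-1}}$ of consecutive right-end variables of $G_1$.
\item The hypothesis $M_{\binom{j}{-1}}=M_{\binom{j}{0}}$ is exactly what lets the extra factor on $e_{L,j}$ cancel this bundle. So $G_2$-faces see no net effect from $G_1$.
\item A gauge transformation at the seam black vertices gives the symmetric argument for $G_1$-faces.
\end{itemize}
In the example, the extra seam weights are $e_2^{-1}p$ on level $1$ and $e_2p^{-1}$ on level $2$. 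One checks directly that then $M_{F_0}=d_1$, $M_{F_1}=d_1q^{-1}$ and $M_{F_2}=e_1$, as required.
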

\begin{figure}[h!]
    \centering
    \tikzset{every picture/.style={line width=0.75pt}} 

\begin{tikzpicture}[x=0.75pt,y=0.75pt,yscale=-1,xscale=1]

\draw    (226.21,35.24) -- (344.88,35.24) ;
\draw    (226.21,65.53) -- (344.88,65.53) ;
\draw    (227.07,93.7) -- (326.33,93.7) -- (344.18,93.7) ;
\draw    (227.07,129.34) -- (325.92,129.34) -- (344.18,129.34) ;
\draw    (227.93,152.16) -- (344.18,152.16) ;
\draw    (55.65,34.94) -- (171.57,34.94) ;
\draw    (56.67,65.22) -- (171.57,65.22) ;
\draw    (56.2,93.39) -- (131.47,93.39) -- (170.53,93.39) ;
\draw    (57.76,129.03) -- (131.92,129.03) -- (169.5,129.03) ;
\draw    (56.91,151.85) -- (170.53,151.85) ;
\draw  [fill={rgb, 255:red, 255; green, 255; blue, 255 }  ,fill opacity=1 ] (67.12,21.39) -- (138.26,21.39) -- (138.26,163.67) -- (67.12,163.67) -- cycle ;
\draw  [fill={rgb, 255:red, 255; green, 255; blue, 255 }  ,fill opacity=1 ] (250.91,21.7) -- (322.04,21.7) -- (322.04,163.97) -- (250.91,163.97) -- cycle ;

\draw (42.99,25.93) node [anchor=north west][inner sep=0.75pt]    {$1$};
\draw (42.28,57.62) node [anchor=north west][inner sep=0.75pt]    {$2$};
\draw (43.69,85.79) node [anchor=north west][inner sep=0.75pt]    {$i$};
\draw (24.39,123.12) node [anchor=north west][inner sep=0.75pt]    {$i+1$};
\draw (21.98,145.66) node [anchor=north west][inner sep=0.75pt]    {$r+1$};
\draw (94.23,85.83) node [anchor=north west][inner sep=0.75pt]    {$G_{1}$};
\draw (228.55,75) node [anchor=north west][inner sep=0.75pt]    {$...$};
\draw (229.41,140) node [anchor=north west][inner sep=0.75pt]    {$...$};
\draw (149.52,140) node [anchor=north west][inner sep=0.75pt]    {$...$};
\draw (154.52,75) node [anchor=north west][inner sep=0.75pt]    {$...$};
\draw (278.02,84.07) node [anchor=north west][inner sep=0.75pt]    {$G_{2}$};
\draw (207.18,23.14) node [anchor=north west][inner sep=0.75pt]    {$1$};
\draw (206.47,54.83) node [anchor=north west][inner sep=0.75pt]    {$2$};
\draw (207.88,83.01) node [anchor=north west][inner sep=0.75pt]    {$i$};
\draw (188.58,120.33) node [anchor=north west][inner sep=0.75pt]    {$i+1$};
\draw (186.17,142.87) node [anchor=north west][inner sep=0.75pt]    {$r+1$};
\draw (212,37.4) node [anchor=north west][inner sep=0.75pt]    {$\textcolor[rgb]{0.82,0.01,0.11}{M}\textcolor[rgb]{0.82,0.01,0.11}{_{\binom{1}{0}}}$};
\draw (213,96.4) node [anchor=north west][inner sep=0.75pt]    {$\textcolor[rgb]{0.82,0.01,0.11}{M}\textcolor[rgb]{0.82,0.01,0.11}{_{\binom{i}{0}}}$};
\draw (138,36.4) node [anchor=north west][inner sep=0.75pt]    {$\textcolor[rgb]{0.82,0.01,0.11}{M}\textcolor[rgb]{0.82,0.01,0.11}{_{\binom{1}{-1}}}$};
\draw (137.47,96.79) node [anchor=north west][inner sep=0.75pt]    {$\textcolor[rgb]{0.82,0.01,0.11}{M}\textcolor[rgb]{0.82,0.01,0.11}{_{\binom{i}{-1}}}$};

\end{tikzpicture}
    \caption{Two $r$-row fences. For Lemma~\ref{lem:gluing_fences}, we need $M_{\binom{i}{-1}}=M_{\binom{i}{0}}$ for every $i$.}
    \label{fig:two_fences_with_prescribed_weights}
\end{figure}
\begin{figure}[h!]
    \centering
    \tikzset{every picture/.style={line width=0.75pt}} 

\begin{tikzpicture}[x=0.75pt,y=0.75pt,yscale=-1,xscale=1]

\draw    (185.47,76.26) -- (394,76.26) ;
\draw    (185.33,107.59) -- (393,107.59) ;
\draw    (184.82,134.65) -- (341.49,134.65) -- (392,134.65) ;
\draw    (184.82,170.25) -- (341.07,170.25) -- (391,170.25) ;
\draw    (180.4,193.9) -- (392,193.9) ;
\draw    (69.69,76.51) -- (185.47,76.51) ;
\draw    (70.71,106.76) -- (185.47,106.76) ;
\draw    (70.24,134.9) -- (145.43,134.9) -- (184.44,134.9) ;
\draw    (71.8,170.49) -- (145.87,170.49) -- (183.41,170.49) ;
\draw    (70.94,193.29) -- (184.44,193.29) ;
\draw  [fill={rgb, 255:red, 255; green, 255; blue, 255 }  ,fill opacity=1 ] (81.15,62.98) -- (152.2,62.98) -- (152.2,205.09) -- (81.15,205.09) -- cycle ;
\draw  [fill={rgb, 255:red, 255; green, 255; blue, 255 }  ,fill opacity=1 ] (283.96,61.6) -- (355.01,61.6) -- (355.01,203.71) -- (283.96,203.71) -- cycle ;
\draw  [fill={rgb, 255:red, 0; green, 0; blue, 0 }  ,fill opacity=1 ][line width=1.5]  (181.49,171.16) .. controls (181.49,169.33) and (182.98,167.84) .. (184.82,167.84) .. controls (186.65,167.84) and (188.14,169.33) .. (188.14,171.16) .. controls (188.14,173) and (186.65,174.48) .. (184.82,174.48) .. controls (182.98,174.48) and (181.49,173) .. (181.49,171.16) -- cycle ;
\draw  [fill={rgb, 255:red, 0; green, 0; blue, 0 }  ,fill opacity=1 ][line width=1.5]  (180.4,193.9) .. controls (180.4,192.06) and (181.88,190.57) .. (183.72,190.57) .. controls (185.55,190.57) and (187.04,192.06) .. (187.04,193.9) .. controls (187.04,195.73) and (185.55,197.22) .. (183.72,197.22) .. controls (181.88,197.22) and (180.4,195.73) .. (180.4,193.9) -- cycle ;
\draw  [fill={rgb, 255:red, 0; green, 0; blue, 0 }  ,fill opacity=1 ][line width=1.5]  (181.49,135.66) .. controls (181.49,133.82) and (182.98,132.34) .. (184.82,132.34) .. controls (186.65,132.34) and (188.14,133.82) .. (188.14,135.66) .. controls (188.14,137.49) and (186.65,138.98) .. (184.82,138.98) .. controls (182.98,138.98) and (181.49,137.49) .. (181.49,135.66) -- cycle ;
\draw  [fill={rgb, 255:red, 0; green, 0; blue, 0 }  ,fill opacity=1 ][line width=1.5]  (182.01,107.59) .. controls (182.01,105.76) and (183.5,104.27) .. (185.33,104.27) .. controls (187.17,104.27) and (188.65,105.76) .. (188.65,107.59) .. controls (188.65,109.43) and (187.17,110.91) .. (185.33,110.91) .. controls (183.5,110.91) and (182.01,109.43) .. (182.01,107.59) -- cycle ;
\draw  [fill={rgb, 255:red, 0; green, 0; blue, 0 }  ,fill opacity=1 ][line width=1.5]  (182.01,77.42) .. controls (182.01,75.59) and (183.5,74.1) .. (185.33,74.1) .. controls (187.17,74.1) and (188.65,75.59) .. (188.65,77.42) .. controls (188.65,79.26) and (187.17,80.74) .. (185.33,80.74) .. controls (183.5,80.74) and (182.01,79.26) .. (182.01,77.42) -- cycle ;
\draw  [dash pattern={on 4.5pt off 4.5pt}]  (184.99,40.16) -- (184.99,238.44) ;
\draw  [color={rgb, 255:red, 245; green, 166; blue, 35 }  ,draw opacity=1 ][line width=1.5]  (197.42,68.69) -- (205.25,68.69) -- (197.42,81.31) -- (205.25,81.31) -- cycle ;
\draw  [color={rgb, 255:red, 245; green, 166; blue, 35 }  ,draw opacity=1 ][line width=1.5]  (196.42,100.69) -- (204.25,100.69) -- (196.42,113.31) -- (204.25,113.31) -- cycle ;
\draw  [color={rgb, 255:red, 245; green, 166; blue, 35 }  ,draw opacity=1 ][line width=1.5]  (195.42,186.69) -- (203.25,186.69) -- (195.42,199.31) -- (203.25,199.31) -- cycle ;
\draw  [color={rgb, 255:red, 245; green, 166; blue, 35 }  ,draw opacity=1 ][line width=1.5]  (195.42,164.69) -- (203.25,164.69) -- (195.42,177.31) -- (203.25,177.31) -- cycle ;

\draw (53.22,64.95) node [anchor=north west][inner sep=0.75pt]    {$1$};
\draw (56.33,99.16) node [anchor=north west][inner sep=0.75pt]    {$2$};
\draw (57.74,127.3) node [anchor=north west][inner sep=0.75pt]    {$i$};
\draw (38.45,164.58) node [anchor=north west][inner sep=0.75pt]    {$i+1$};
\draw (36.04,187.09) node [anchor=north west][inner sep=0.75pt]    {$r+1$};
\draw (108.21,127.33) node [anchor=north west][inner sep=0.75pt]    {$G_{1}$};
\draw (248.48,178) node [anchor=north west][inner sep=0.75pt]    {$...$};
\draw (159.62,178) node [anchor=north west][inner sep=0.75pt]    {$...$};
\draw (160.8,117) node [anchor=north west][inner sep=0.75pt]    {$...$};
\draw (309.21,126.02) node [anchor=north west][inner sep=0.75pt]    {$G_{2}$};
\draw (247.32,117) node [anchor=north west][inner sep=0.75pt]    {$...$};
\draw (192,43.4) node [anchor=north west][inner sep=0.75pt]    {$\textcolor[rgb]{0.96,0.65,0.14}{M_{\binom{1}{0}}^{-1} *}$};
\draw (190.65,79) node [anchor=north west][inner sep=0.75pt]    {$\textcolor[rgb]{0.96,0.65,0.14}{M}\textcolor[rgb]{0.96,0.65,0.14}{_{\binom{1}{0}}}\textcolor[rgb]{0.96,0.65,0.14}{M}\textcolor[rgb]{0.96,0.65,0.14}{_{\binom{2}{0}}^{^{-1}} *}$};
\draw (186.82,138.05) node [anchor=north west][inner sep=0.75pt]    {$\textcolor[rgb]{0.96,0.65,0.14}{M}\textcolor[rgb]{0.96,0.65,0.14}{_{\binom{i}{0}}}\textcolor[rgb]{0.96,0.65,0.14}{M}\textcolor[rgb]{0.96,0.65,0.14}{_{\binom{i+1}{0}}^{^{-1}} *}$};
\draw (189.04,199) node [anchor=north west][inner sep=0.75pt]    {$\textcolor[rgb]{0.96,0.65,0.14}{M}\textcolor[rgb]{0.96,0.65,0.14}{_{\binom{r}{0}}}\textcolor[rgb]{0.96,0.65,0.14}{*}$};

\end{tikzpicture}
    \caption{Gluing of $G_1$ and $G_2$.}
    \label{fig:gluing_fences}
\end{figure}
\begin{proof}
    Denote the generalized matching variables on the right end faces of $G_1$ as $M_{\binom{i}{-1}}$, and those on the left-end faces of $G_2$ as $M_{\binom{i}{0}}$ (see Figure~\ref{fig:two_fences_with_prescribed_weights}). Then, we have $M_{\binom{i}{-1}} = M_{\binom{i}{0}}, \forall i\in [r]$. 
    
    We construct weights on the glued graph $G$ as follows. For edges on the $G_1$-side of $G$, keep their original weights from $G_1$, and similarly, for edges on the $G_2$-side of $G$, keep the original weights from $G_2$. For the leftmost horizontal edge of $G_2$ on the $i$-th level, multiply the original weight by $M_{\binom{i-1}{0}}M^{-1}_{\binom{i}{0}}$, where $M_{\binom{-1}{0}}:= 1, M_{\binom{r+1}{0}}:= 1$. Notice that this construction ensures that $G$ is gauge equivalent to a right canonical gauge, where the rightmost edge weights multiply to one. We claim this is the desired weighting.

    First, we demonstrate how to compute the contribution to face weights on $G_2$ from edges of $G_1$. Note that by construction of the generalized matching, when we cross from the left to the right side of the positive strand of an edge $e\in E(G_1)$, the face weights change by $w(e)$, and similarly, when we cross from the left to the right of the negative strand of $e$, face weights change by $w(e)^{-1}$. By Lemma~\ref{lem: face_monomial_top_bottom}, the bottom face of $G$ has weight $1$. Thus, the change of face weights when we cross all the strands traveling rightwards along the $r+1$-th level to go from the bottom face to the right end face on the $r$-th row is equal to $M_{\binom{r}{-1}}$. Similarly, when we cross all the strands traveling rightwards along the $i$-th level to go from the right end face on the $i$-th row to that on the $(i-1)$-th row, face weights change by $M^{-1}_{\binom{i}{-1}}M_{\binom{i-1}{-1}}$ (see Figure~\ref{fig:gluing_fences}).

    Note that the strands coming from $G_1$ that are traveling rightward along the $i$-th level of $G$ will all go along the same direction as the positive strand of the left-most edge $e_{L,i}$ on the $i$-th level of $G_2$. Thus, when we cross all these strands together from the right to the left, face weights will change by $(M^{-1}_{\binom{i}{-1}}M_{\binom{i-1}{-1}})\cdot(M_{\binom{i-1}{0}}M^{-1}_{\binom{i}{0}}w(e_{L,i}))^{-1} = w(e_{L,i})^{-1}$, which is the same change as when $G_1$ was not present. Therefore, the face weights of $G$ on the $G_2$-side are all the same as the face weights of $G_2$.

    To compute the contribution of $G_2$ on the $G_1$-side of $G$, we can first do a gauge transformation on the black vertices of the gluing interface (which does not change the face weights of $G$) to get the following in Figure~\ref{fig:shifted_weights}, and apply similar reasoning to show that the face weights of $G$ on the $G_1$-side are all the same as the face weights of $G_1$.
    \begin{figure}[h!]
    \centering
    \tikzset{every picture/.style={line width=0.75pt}} 

\begin{tikzpicture}[x=0.75pt,y=0.75pt,yscale=-1,xscale=1]

\draw    (184.25,76.48) -- (392.78,76.48) ;
\draw    (184.25,106.69) -- (391.92,106.69) ;
\draw    (183.22,134.79) -- (339.89,134.79) -- (390.41,134.79) ;
\draw    (182.19,170.33) -- (338.45,170.33) -- (388.38,170.33) ;
\draw    (183.22,193.09) -- (394.83,193.09) ;
\draw    (68.64,76.48) -- (184.25,76.48) ;
\draw    (69.66,106.69) -- (184.25,106.69) ;
\draw    (69.19,134.79) -- (144.26,134.79) -- (183.22,134.79) ;
\draw    (70.75,170.33) -- (144.71,170.33) -- (182.19,170.33) ;
\draw    (69.89,193.09) -- (183.22,193.09) ;
\draw  [fill={rgb, 255:red, 255; green, 255; blue, 255 }  ,fill opacity=1 ] (80.08,62.98) -- (151.03,62.98) -- (151.03,204.88) -- (80.08,204.88) -- cycle ;
\draw  [fill={rgb, 255:red, 255; green, 255; blue, 255 }  ,fill opacity=1 ] (283.96,61.6) -- (355.01,61.6) -- (355.01,203.71) -- (283.96,203.71) -- cycle ;
\draw  [fill={rgb, 255:red, 0; green, 0; blue, 0 }  ,fill opacity=1 ][line width=1.5]  (236.49,170.16) .. controls (236.49,168.33) and (237.98,166.84) .. (239.82,166.84) .. controls (241.65,166.84) and (243.14,168.33) .. (243.14,170.16) .. controls (243.14,172) and (241.65,173.48) .. (239.82,173.48) .. controls (237.98,173.48) and (236.49,172) .. (236.49,170.16) -- cycle ;
\draw  [fill={rgb, 255:red, 0; green, 0; blue, 0 }  ,fill opacity=1 ][line width=1.5]  (235.4,193.9) .. controls (235.4,192.06) and (236.88,190.57) .. (238.72,190.57) .. controls (240.55,190.57) and (242.04,192.06) .. (242.04,193.9) .. controls (242.04,195.73) and (240.55,197.22) .. (238.72,197.22) .. controls (236.88,197.22) and (235.4,195.73) .. (235.4,193.9) -- cycle ;
\draw  [fill={rgb, 255:red, 0; green, 0; blue, 0 }  ,fill opacity=1 ][line width=1.5]  (235.67,137.3) .. controls (235.67,135.47) and (237.16,133.98) .. (238.99,133.98) .. controls (240.83,133.98) and (242.31,135.47) .. (242.31,137.3) .. controls (242.31,139.14) and (240.83,140.62) .. (238.99,140.62) .. controls (237.16,140.62) and (235.67,139.14) .. (235.67,137.3) -- cycle ;
\draw  [fill={rgb, 255:red, 0; green, 0; blue, 0 }  ,fill opacity=1 ][line width=1.5]  (236.01,108.59) .. controls (236.01,106.76) and (237.5,105.27) .. (239.33,105.27) .. controls (241.17,105.27) and (242.65,106.76) .. (242.65,108.59) .. controls (242.65,110.43) and (241.17,111.91) .. (239.33,111.91) .. controls (237.5,111.91) and (236.01,110.43) .. (236.01,108.59) -- cycle ;
\draw  [fill={rgb, 255:red, 0; green, 0; blue, 0 }  ,fill opacity=1 ][line width=1.5]  (235.01,76.42) .. controls (235.01,74.59) and (236.5,73.1) .. (238.33,73.1) .. controls (240.17,73.1) and (241.65,74.59) .. (241.65,76.42) .. controls (241.65,78.26) and (240.17,79.74) .. (238.33,79.74) .. controls (236.5,79.74) and (235.01,78.26) .. (235.01,76.42) -- cycle ;
\draw  [dash pattern={on 4.5pt off 4.5pt}]  (238.99,38.16) -- (238.99,236.44) ;
\draw  [color={rgb, 255:red, 245; green, 166; blue, 35 }  ,draw opacity=1 ][line width=1.5]  (184.34,70.18) -- (192.16,70.18) -- (184.34,82.79) -- (192.16,82.79) -- cycle ;
\draw  [color={rgb, 255:red, 245; green, 166; blue, 35 }  ,draw opacity=1 ][line width=1.5]  (180.34,100.38) -- (188.16,100.38) -- (180.34,113) -- (188.16,113) -- cycle ;
\draw  [color={rgb, 255:red, 245; green, 166; blue, 35 }  ,draw opacity=1 ][line width=1.5]  (178.28,164.03) -- (186.11,164.03) -- (178.28,176.64) -- (186.11,176.64) -- cycle ;
\draw  [color={rgb, 255:red, 245; green, 166; blue, 35 }  ,draw opacity=1 ][line width=1.5]  (179.31,186.79) -- (187.14,186.79) -- (179.31,199.4) -- (187.14,199.4) -- cycle ;

\draw (52.18,64.94) node [anchor=north west][inner sep=0.75pt]    {$1$};
\draw (55.29,99.09) node [anchor=north west][inner sep=0.75pt]    {$2$};
\draw (56.7,127.19) node [anchor=north west][inner sep=0.75pt]    {$i$};
\draw (37.42,164.42) node [anchor=north west][inner sep=0.75pt]    {$i+1$};
\draw (35.01,186.9) node [anchor=north west][inner sep=0.75pt]    {$r+1$};
\draw (107.09,127.22) node [anchor=north west][inner sep=0.75pt]    {$G_{1}$};
\draw (248.48,179) node [anchor=north west][inner sep=0.75pt]    {$...$};
\draw (158.43,179) node [anchor=north west][inner sep=0.75pt]    {$...$};
\draw (159.6,117) node [anchor=north west][inner sep=0.75pt]    {$...$};
\draw (309.21,126.02) node [anchor=north west][inner sep=0.75pt]    {$G_{2}$};
\draw (247.32,117) node [anchor=north west][inner sep=0.75pt]    {$...$};
\draw (160,45) node [anchor=north west][inner sep=0.75pt]    {$\textcolor[rgb]{0.96,0.65,0.14}{M_{\binom{1}{0}} *}$};
\draw (154.65,76.82) node [anchor=north west][inner sep=0.75pt]    {$\textcolor[rgb]{0.96,0.65,0.14}{M}\textcolor[rgb]{0.96,0.65,0.14}{^{-1}{}_{\binom{1}{0}}}\textcolor[rgb]{0.96,0.65,0.14}{M}\textcolor[rgb]{0.96,0.65,0.14}{_{\binom{2}{0}} *}$};
\draw (152.82,135.05) node [anchor=north west][inner sep=0.75pt]    {$\textcolor[rgb]{0.96,0.65,0.14}{M}\textcolor[rgb]{0.96,0.65,0.14}{_{\binom{i}{0}}^{-1}}\textcolor[rgb]{0.96,0.65,0.14}{M}\textcolor[rgb]{0.96,0.65,0.14}{_{\binom{i+1}{0}} *}$};
\draw (161,200.4) node [anchor=north west][inner sep=0.75pt]    {$\textcolor[rgb]{0.96,0.65,0.14}{M}\textcolor[rgb]{0.96,0.65,0.14}{_{\binom{r}{0}}^{-1}}\textcolor[rgb]{0.96,0.65,0.14}{*}$};

\end{tikzpicture}
    \caption{Shifted weights.}
    \label{fig:shifted_weights}
    \end{figure}
\end{proof}

\begin{proof}[Proof of Theorem~\ref{thm:matching_equals_minor}]
We use induction on the number of columns of the plabic fence. 

\noindent \textbf{Base Case I:}
Suppose there is only one ``white-over-black" column.

The point in $\text{Conf}(s_i, e)$ to which this fence corresponds is a single upward triangle with the left most side normalized to be of the form $tU_+\mathdash U_-$, where the upper-rightmost flag is fixed as $A^1 = e_{-i}(p)dU_+$ with the given edge weights, and $t = d(d^{\alpha_i}p)^{-\alpha^{\vee}_i}$ by Remark~\ref{rem: back_propagation_of_decoration} (see Figure~\ref{fig:base_case_1}). Thus, when we compute the generalized minors of this point, we get $A_{\binom{l}{1}} = d^{w_l} = d_1...d_l, l\in [r]$, and $$A_{\binom{l}{0}} = t^{w_l} = \begin{cases}
        d_1...d_{i-1}(d_{i+1}p^{-1}), l=i\\
        d_1...d_l, l\in [r]\setminus\{i\}\\
    \end{cases} $$

\begin{figure}[h!]
    \centering
    \includestandalone[mode=tex, width=0.7\textwidth]{figures/base_case_1}
    \caption{Base case I.}
    \label{fig:base_case_1}
\end{figure}

On the other hand, we can compute the generalized matchings. Note first that faces on the right end of the $l$-th level are on the left side of the positive strands coming out of horizontal edges with weights $d^{-1}_1,...,d^{-1}_l$, and since they are on the right side of the positive strand from the vertical edge to the direct downstream face $\binom{i}{0}$, we get $M_{\binom{l}{0}} = (d^{-1}_1...d^{-1}_l)^{-1} = d_1...d_l, l\neq i$. Lastly, since the face $\binom{i}{0}$ is in the direct downstream region of the vertical edge, and on the left side of the positive strand with weight $d^{-1}_{i+1}$, but on the right side of the positive strand with weight $d^{-1}_i$, we have $M_{\binom{i}{0}} = (d^{-1}_1...d^{-1}_{i-1}d^{-1}_{i+1}p)^{-1} = d_1...d_{i-1}d_{i+1}p^{-1} = A_{\binom{i}{0}}$. Thus, the generalized matching variables on faces recover the corresponding minors.

\noindent \textbf{Base Case II:} Suppose there is only one ``black-over-white" column. First we compute the generalized minors of the point given by Definition~\ref{def: canonical_parametrization}. See Figure~\ref{fig:base_case_2}. We have $A^0 = dU_+$, $A_0 = U_-$, and $A_1 = e_{i}(q)(t_1')^{-1}U_-$, where $t'_1 = \text{Id}_{r+1}(\text{Id}^{\alpha_i}_{r+1}q)^{-\alpha^{\vee}_{i}} = \phi_i\m{q^{-1}&0\\0&q}$, and $q$ is the column weight, and $d$ is given by the weights on the rightmost horizontal edges.

Thus, we have $A_{\binom{l}{0}} = d^{w_l} = d_1...d_l= (d^{-1}_1...d^{-1}_l)^{-1}= M_{\binom{l}{0}}$ since positive strands from horizontal edges do not cross each other. In addition, $A_{\binom{i}{1}} = t^{w_i} = d_1...d_{i-1}(d_{i}q^{-1})$. On the other hand, since the right end face on the $i$-th level is in the direct downstream region of the vertical edge, we have $M_{\binom{i}{1}} = (d^{-1}_1...d^{-1}_iq)^{-1} = d_1...d_iq^{-1}$.

\begin{figure}[h!]
    \centering
    \includestandalone[mode=tex, width=0.7\textwidth]{figures/base_case_2}
    \caption{Base case II.}
    \label{fig:base_case_2}
\end{figure}

\noindent \textbf{Inductive Case I:} Suppose there are $(n+m+1)$ columns in $G_\beta$, and the $(n+m+1)$-th column is of type ``white-over-black." Take the corresponding point parametrized by Definition~\ref{def: canonical_parametrization}, See Figure~\ref{fig:inductive_case_1} on the right. Let $g$ denote the product of Chevalley generators of the first $n+m$ triangles, with the column weights of the plabic fence as their parameters. By Remark~\ref{rem: back_propagation_of_decoration}, we know $t_n = d(d^{\alpha_i}p_{n+1})^{-\alpha^\vee_i}$ is uniquely determined by the edge weights. By inductive hypothesis, the left subgraph $G$ with rightmost horizontal edge weights $(t^{-1}_n)_1,...,(t^{-1}_n)_{r+1}$ has face weights equal to the generalized minors of the point consisting of flags in the vertices of the first $(n+m)$ triangles in $C_\beta$.

\begin{figure}[h!]
    \centering
    \includestandalone[mode=tex, width=0.8\textwidth]{figures/inductive_case_1}
    \caption{Inductive case I.}
    \label{fig:inductive_case_1}
\end{figure}

Moreover, by Base Case I, we have the following correspondence of a plabic fence with a triangulation that is left-action equivalent to the $(n+m+1)$-th triangle, see Figure~\ref{fig:inductive_case_1_rightside_triangle}:

\begin{figure}[h!]
    \centering
    \tikzset{every picture/.style={line width=0.75pt}} 

\begin{tikzpicture}[x=0.75pt,y=0.75pt,yscale=-1,xscale=1]

\draw    (270.93,117) -- (296.93,117) ;
\draw [shift={(298.93,117)}, rotate = 180] [color={rgb, 255:red, 0; green, 0; blue, 0 }  ][line width=0.75]    (10.93,-3.29) .. controls (6.95,-1.4) and (3.31,-0.3) .. (0,0) .. controls (3.31,0.3) and (6.95,1.4) .. (10.93,3.29)   ;
\draw    (226,125) -- (226,182.38) ;
\draw    (332.93,133.5) -- (240.69,190.17) ;
\draw    (21.72,88.6) -- (132.72,88.6) ;
\draw  [fill={rgb, 255:red, 255; green, 255; blue, 255 }  ,fill opacity=1 ][line width=1.5]  (58.78,88.6) .. controls (58.78,86.45) and (60.52,84.71) .. (62.67,84.71) .. controls (64.81,84.71) and (66.55,86.45) .. (66.55,88.6) .. controls (66.55,90.75) and (64.81,92.49) .. (62.67,92.49) .. controls (60.52,92.49) and (58.78,90.75) .. (58.78,88.6) -- cycle ;
\draw    (61.85,156.74) -- (61.85,198.28) ;
\draw    (22.54,123.9) -- (132.72,123.9) ;
\draw  [fill={rgb, 255:red, 255; green, 255; blue, 255 }  ,fill opacity=1 ][line width=1.5]  (58.78,123.9) .. controls (58.78,121.75) and (60.52,120.01) .. (62.67,120.01) .. controls (64.81,120.01) and (66.55,121.75) .. (66.55,123.9) .. controls (66.55,126.05) and (64.81,127.79) .. (62.67,127.79) .. controls (60.52,127.79) and (58.78,126.05) .. (58.78,123.9) -- cycle ;
\draw    (23.37,156.74) -- (111.1,156.74) -- (131.9,156.74) ;
\draw  [fill={rgb, 255:red, 255; green, 255; blue, 255 }  ,fill opacity=1 ][line width=1.5]  (57.96,156.74) .. controls (57.96,154.59) and (59.7,152.85) .. (61.85,152.85) .. controls (63.99,152.85) and (65.73,154.59) .. (65.73,156.74) .. controls (65.73,158.89) and (63.99,160.63) .. (61.85,160.63) .. controls (59.7,160.63) and (57.96,158.89) .. (57.96,156.74) -- cycle ;
\draw    (24.19,198.28) -- (110.62,198.28) -- (131.9,198.28) ;
\draw  [fill={rgb, 255:red, 0; green, 0; blue, 0 }  ,fill opacity=1 ][line width=1.5]  (57.96,198.28) .. controls (57.96,196.13) and (59.7,194.39) .. (61.85,194.39) .. controls (63.99,194.39) and (65.73,196.13) .. (65.73,198.28) .. controls (65.73,200.42) and (63.99,202.17) .. (61.85,202.17) .. controls (59.7,202.17) and (57.96,200.42) .. (57.96,198.28) -- cycle ;
\draw    (24.19,224.88) -- (131.9,224.88) ;
\draw  [fill={rgb, 255:red, 255; green, 255; blue, 255 }  ,fill opacity=1 ][line width=1.5]  (57.96,224.88) .. controls (57.96,222.73) and (59.7,220.99) .. (61.85,220.99) .. controls (63.99,220.99) and (65.73,222.73) .. (65.73,224.88) .. controls (65.73,227.02) and (63.99,228.77) .. (61.85,228.77) .. controls (59.7,228.77) and (57.96,227.02) .. (57.96,224.88) -- cycle ;
\draw  [fill={rgb, 255:red, 255; green, 255; blue, 255 }  ,fill opacity=1 ][line width=1.5]  (36.61,197.79) .. controls (36.61,195.64) and (38.35,193.9) .. (40.5,193.9) .. controls (42.65,193.9) and (44.39,195.64) .. (44.39,197.79) .. controls (44.39,199.93) and (42.65,201.67) .. (40.5,201.67) .. controls (38.35,201.67) and (36.61,199.93) .. (36.61,197.79) -- cycle ;
\draw  [fill={rgb, 255:red, 255; green, 255; blue, 255 }  ,fill opacity=1 ][line width=1.5]  (80.12,197.79) .. controls (80.12,195.64) and (81.86,193.9) .. (84.01,193.9) .. controls (86.16,193.9) and (87.9,195.64) .. (87.9,197.79) .. controls (87.9,199.93) and (86.16,201.67) .. (84.01,201.67) .. controls (81.86,201.67) and (80.12,199.93) .. (80.12,197.79) -- cycle ;
\draw    (515.93,116) -- (541.93,116) ;
\draw [shift={(543.93,116)}, rotate = 180] [color={rgb, 255:red, 0; green, 0; blue, 0 }  ][line width=0.75]    (10.93,-3.29) .. controls (6.95,-1.4) and (3.31,-0.3) .. (0,0) .. controls (3.31,0.3) and (6.95,1.4) .. (10.93,3.29)   ;
\draw    (498,124) -- (498,181.38) ;
\draw    (564,133) -- (512.69,189.17) ;
\draw    (348,163) .. controls (349.67,161.33) and (351.33,161.33) .. (353,163) .. controls (354.67,164.67) and (356.33,164.67) .. (358,163) .. controls (359.67,161.33) and (361.33,161.33) .. (363,163) .. controls (364.67,164.67) and (366.33,164.67) .. (368,163) .. controls (369.67,161.33) and (371.33,161.33) .. (373,163) .. controls (374.67,164.67) and (376.33,164.67) .. (378,163) .. controls (379.67,161.33) and (381.33,161.33) .. (383,163) .. controls (384.67,164.67) and (386.33,164.67) .. (388,163) .. controls (389.67,161.33) and (391.33,161.33) .. (393,163) .. controls (394.67,164.67) and (396.33,164.67) .. (398,163) .. controls (399.67,161.33) and (401.33,161.33) .. (403,163) .. controls (404.67,164.67) and (406.33,164.67) .. (408,163) .. controls (409.67,161.33) and (411.33,161.33) .. (413,163) .. controls (414.67,164.67) and (416.33,164.67) .. (418,163) .. controls (419.67,161.33) and (421.33,161.33) .. (423,163) .. controls (424.67,164.67) and (426.33,164.67) .. (428,163) .. controls (429.67,161.33) and (431.33,161.33) .. (433,163) .. controls (434.67,164.67) and (436.33,164.67) .. (438,163) -- (441,163) -- (449,163) ;
\draw [shift={(451,163)}, rotate = 180] [color={rgb, 255:red, 0; green, 0; blue, 0 }  ][line width=0.75]    (10.93,-3.29) .. controls (6.95,-1.4) and (3.31,-0.3) .. (0,0) .. controls (3.31,0.3) and (6.95,1.4) .. (10.93,3.29)   ;
\draw    (161,160) -- (209,160) ;
\draw [shift={(211,160)}, rotate = 180] [color={rgb, 255:red, 0; green, 0; blue, 0 }  ][line width=0.75]    (10.93,-3.29) .. controls (6.95,-1.4) and (3.31,-0.3) .. (0,0) .. controls (3.31,0.3) and (6.95,1.4) .. (10.93,3.29)   ;
\draw [shift={(159,160)}, rotate = 0] [color={rgb, 255:red, 0; green, 0; blue, 0 }  ][line width=0.75]    (10.93,-3.29) .. controls (6.95,-1.4) and (3.31,-0.3) .. (0,0) .. controls (3.31,0.3) and (6.95,1.4) .. (10.93,3.29)   ;

\draw (209,107.4) node [anchor=north west][inner sep=0.75pt]    {$t'_{m} t_{n} U_{+}$};
\draw (221,192.4) node [anchor=north west][inner sep=0.75pt]    {$U_{-}$};
\draw (307,105.4) node [anchor=north west][inner sep=0.75pt]    {$\textcolor[rgb]{0.25,0.46,0.02}{e_{-i}\left(( t'_{m})^{-\alpha _{i}} p_{n+1}\right) t'_{m} dU_{+}}$};
\draw (275,93.4) node [anchor=north west][inner sep=0.75pt]    {$s_{i}$};
\draw (63.85,164.03) node [anchor=north west][inner sep=0.75pt]    {$\textcolor[rgb]{0.29,0.56,0.89}{( t'_{m})^{-\alpha _{i}} p}\textcolor[rgb]{0.29,0.56,0.89}{_{n+1}}$};
\draw (113.43,68.24) node [anchor=north west][inner sep=0.75pt]    {$\textcolor[rgb]{0.25,0.46,0.02}{( t'_{m} d)_{1}^{-1}}$};
\draw (55,135) node [anchor=north west][inner sep=0.75pt]    {$...$};
\draw (55,208) node [anchor=north west][inner sep=0.75pt]    {$...$};
\draw (488,103.4) node [anchor=north west][inner sep=0.75pt]    {$A^{n}$};
\draw (488,185.4) node [anchor=north west][inner sep=0.75pt]    {$A_{m}$};
\draw (556,107.4) node [anchor=north west][inner sep=0.75pt]    {$A^{n+1}$};
\draw (524,93.4) node [anchor=north west][inner sep=0.75pt]    {$s_{i}$};
\draw (372,171.4) node [anchor=north west][inner sep=0.75pt]    {$g( t'_{m})^{-1} *$};
\draw (114.43,103.24) node [anchor=north west][inner sep=0.75pt]    {$\textcolor[rgb]{0.25,0.46,0.02}{(}\textcolor[rgb]{0.25,0.46,0.02}{t'}\textcolor[rgb]{0.25,0.46,0.02}{_{m}}\textcolor[rgb]{0.25,0.46,0.02}{d}\textcolor[rgb]{0.25,0.46,0.02}{)}\textcolor[rgb]{0.25,0.46,0.02}{_{2}^{-1}}$};
\draw (114.43,132.24) node [anchor=north west][inner sep=0.75pt]    {$\textcolor[rgb]{0.25,0.46,0.02}{(}\textcolor[rgb]{0.25,0.46,0.02}{t'}\textcolor[rgb]{0.25,0.46,0.02}{_{m}}\textcolor[rgb]{0.25,0.46,0.02}{d}\textcolor[rgb]{0.25,0.46,0.02}{)}\textcolor[rgb]{0.25,0.46,0.02}{_{i}^{-1}}$};
\draw (115.43,185.24) node [anchor=north west][inner sep=0.75pt]    {$\textcolor[rgb]{0.25,0.46,0.02}{(}\textcolor[rgb]{0.25,0.46,0.02}{t'}\textcolor[rgb]{0.25,0.46,0.02}{_{m}}\textcolor[rgb]{0.25,0.46,0.02}{d}\textcolor[rgb]{0.25,0.46,0.02}{)}\textcolor[rgb]{0.25,0.46,0.02}{_{i+1}^{-1}}$};
\draw (115.43,211.24) node [anchor=north west][inner sep=0.75pt]    {$\textcolor[rgb]{0.25,0.46,0.02}{(}\textcolor[rgb]{0.25,0.46,0.02}{t'}\textcolor[rgb]{0.25,0.46,0.02}{_{m}}\textcolor[rgb]{0.25,0.46,0.02}{d}\textcolor[rgb]{0.25,0.46,0.02}{)}\textcolor[rgb]{0.25,0.46,0.02}{_{r+1}^{-1}}$};

\end{tikzpicture}
    \caption{The correspondence between the last column in inductive case I and the last triangle of flags.}
    \label{fig:inductive_case_1_rightside_triangle}
\end{figure}

By Lemma~\ref{lem:gluing_fences}, with the additional weights $(t'_mt_n)^{w_{i-1}}(t'_mt_n)^{-w_{i}} = (t'_mt_n)^{-1}_i$ on the leftmost edge of the $i$-th level of right-hand graph, the following glued plabic fence in Figure~\ref{fig:inductive_case_1_glue} has the same face weights on the left subgraph (separated from the dashed line) as $G$ and the same face weights on the right subgraph as the second plabic fence corresponding to the above single triangle.

\begin{figure}[h!]
    \centering
    \includestandalone[mode=tex, width=0.6\textwidth]{figures/inductive_case_1_glue}
    \caption{The glued plabic fence in inductive case I.}
    \label{fig:inductive_case_1_glue}
\end{figure}
    
    Using gauge transformations to move the weights on the horizontal edges on two sides of the dashed line towards to the right end, we would obtain the same plabic fence we started with, which shows its face weights are the minors of the merged triangulation.

\noindent \textbf{Inductive Case II:} Suppose the $(n+m+1)$-th column is of type ``black-over-white." See Figure~\ref{fig:inductive_case_2} left. As before, let $g$ denote the product of all Chevalley generators coming from the first $(n+m)$ triangles. Then, the flags in the last triangle are $A^n = gdU_+, A_m = g(t'_m)^{-1}U_-, A_{m+1} = ge_{i}(q_{m+1})(t'_{m+1})^{-1}U_-$, where $d, q_{m+1}$ come from weights on the plabic fence, and the other decorations are determined by Remark~\ref{rem: back_propagation_of_decoration} and Lemma~\ref{lem:forward_inductive_param}. 

By the inductive hypothesis, the induced subgraph $G$ on the first $(n+m)$ columns, with rightmost horizontal edge weights $d^{-1}_1,...,d^{-1}_{r+1}$ has generalized matchings computing the generalized minors of the point corresponding to the first $(n+m)$ triangles.  

\begin{figure}[h!]
    \centering
    \tikzset{every picture/.style={line width=0.75pt}} 

\begin{tikzpicture}[x=0.75pt,y=0.75pt,yscale=-1,xscale=1]

\draw    (37,71) -- (74.81,71) ;
\draw   (75,56) -- (137.41,56) -- (137.41,185.46) -- (75,185.46) -- cycle ;
\draw    (37,106) -- (74.81,106) ;
\draw    (37,140) -- (74.81,140) ;
\draw    (37,175) -- (74.81,175) ;
\draw    (138,72) -- (175.81,72) ;
\draw    (138,107) -- (175.81,107) ;
\draw    (138,141) -- (175.81,141) ;
\draw    (138,176) -- (175.81,176) ;
\draw  [fill={rgb, 255:red, 0; green, 0; blue, 0 }  ,fill opacity=1 ][line width=1.5]  (172.16,72) .. controls (172.16,69.99) and (173.8,68.35) .. (175.81,68.35) .. controls (177.83,68.35) and (179.46,69.99) .. (179.46,72) .. controls (179.46,74.01) and (177.83,75.65) .. (175.81,75.65) .. controls (173.8,75.65) and (172.16,74.01) .. (172.16,72) -- cycle ;
\draw  [fill={rgb, 255:red, 0; green, 0; blue, 0 }  ,fill opacity=1 ][line width=1.5]  (172.16,107) .. controls (172.16,104.99) and (173.8,103.35) .. (175.81,103.35) .. controls (177.83,103.35) and (179.46,104.99) .. (179.46,107) .. controls (179.46,109.01) and (177.83,110.65) .. (175.81,110.65) .. controls (173.8,110.65) and (172.16,109.01) .. (172.16,107) -- cycle ;
\draw  [fill={rgb, 255:red, 0; green, 0; blue, 0 }  ,fill opacity=1 ][line width=1.5]  (172.16,141) .. controls (172.16,138.99) and (173.8,137.35) .. (175.81,137.35) .. controls (177.83,137.35) and (179.46,138.99) .. (179.46,141) .. controls (179.46,143.01) and (177.83,144.65) .. (175.81,144.65) .. controls (173.8,144.65) and (172.16,143.01) .. (172.16,141) -- cycle ;
\draw  [fill={rgb, 255:red, 0; green, 0; blue, 0 }  ,fill opacity=1 ][line width=1.5]  (172.16,176) .. controls (172.16,173.99) and (173.8,172.35) .. (175.81,172.35) .. controls (177.83,172.35) and (179.46,173.99) .. (179.46,176) .. controls (179.46,178.01) and (177.83,179.65) .. (175.81,179.65) .. controls (173.8,179.65) and (172.16,178.01) .. (172.16,176) -- cycle ;
\draw  [dash pattern={on 4.5pt off 4.5pt}]  (175.81,45.75) -- (175.81,203) ;
\draw    (175.81,72) -- (300.5,72) ;
\draw  [fill={rgb, 255:red, 0; green, 0; blue, 0 }  ,fill opacity=1 ][line width=1.5]  (232.88,107) .. controls (232.88,104.99) and (234.51,103.35) .. (236.52,103.35) .. controls (238.54,103.35) and (240.17,104.99) .. (240.17,107) .. controls (240.17,109.01) and (238.54,110.65) .. (236.52,110.65) .. controls (234.51,110.65) and (232.88,109.01) .. (232.88,107) -- cycle ;
\draw  [fill={rgb, 255:red, 255; green, 255; blue, 255 }  ,fill opacity=1 ][line width=1.5]  (232.88,72) .. controls (232.88,69.99) and (234.51,68.35) .. (236.52,68.35) .. controls (238.54,68.35) and (240.17,69.99) .. (240.17,72) .. controls (240.17,74.01) and (238.54,75.65) .. (236.52,75.65) .. controls (234.51,75.65) and (232.88,74.01) .. (232.88,72) -- cycle ;
\draw    (175.81,107) -- (301.33,107) ;
\draw    (175.81,141) -- (300.5,141) ;
\draw    (175.81,176) -- (300.5,176) ;
\draw  [fill={rgb, 255:red, 255; green, 255; blue, 255 }  ,fill opacity=1 ][line width=1.5]  (202.52,107) .. controls (202.52,104.99) and (204.15,103.35) .. (206.17,103.35) .. controls (208.18,103.35) and (209.82,104.99) .. (209.82,107) .. controls (209.82,109.01) and (208.18,110.65) .. (206.17,110.65) .. controls (204.15,110.65) and (202.52,109.01) .. (202.52,107) -- cycle ;
\draw  [fill={rgb, 255:red, 255; green, 255; blue, 255 }  ,fill opacity=1 ][line width=1.5]  (232.88,176) .. controls (232.88,173.99) and (234.51,172.35) .. (236.52,172.35) .. controls (238.54,172.35) and (240.17,173.99) .. (240.17,176) .. controls (240.17,178.01) and (238.54,179.65) .. (236.52,179.65) .. controls (234.51,179.65) and (232.88,178.01) .. (232.88,176) -- cycle ;
\draw  [fill={rgb, 255:red, 255; green, 255; blue, 255 }  ,fill opacity=1 ][line width=1.5]  (257,106.65) .. controls (257,104.63) and (258.63,103) .. (260.65,103) .. controls (262.66,103) and (264.3,104.63) .. (264.3,106.65) .. controls (264.3,108.66) and (262.66,110.3) .. (260.65,110.3) .. controls (258.63,110.3) and (257,108.66) .. (257,106.65) -- cycle ;
\draw    (236.52,107) -- (236.52,141) ;
\draw  [fill={rgb, 255:red, 255; green, 255; blue, 255 }  ,fill opacity=1 ][line width=1.5]  (232.67,141) .. controls (232.67,138.99) and (234.3,137.35) .. (236.32,137.35) .. controls (238.33,137.35) and (239.96,138.99) .. (239.96,141) .. controls (239.96,143.01) and (238.33,144.65) .. (236.32,144.65) .. controls (234.3,144.65) and (232.67,143.01) .. (232.67,141) -- cycle ;
\draw    (347,92) -- (347,150) ;
\draw    (481,93) -- (481,151) ;
\draw    (369,76) -- (388.03,76) ;
\draw [shift={(390.03,76)}, rotate = 180] [color={rgb, 255:red, 0; green, 0; blue, 0 }  ][line width=0.75]    (10.93,-3.29) .. controls (6.95,-1.4) and (3.31,-0.3) .. (0,0) .. controls (3.31,0.3) and (6.95,1.4) .. (10.93,3.29)   ;
\draw    (441.03,78) -- (460.03,78) ;
\draw [shift={(462.03,78)}, rotate = 180] [color={rgb, 255:red, 0; green, 0; blue, 0 }  ][line width=0.75]    (10.93,-3.29) .. controls (6.95,-1.4) and (3.31,-0.3) .. (0,0) .. controls (3.31,0.3) and (6.95,1.4) .. (10.93,3.29)   ;
\draw    (363,163) -- (386.03,163) ;
\draw [shift={(388.03,163)}, rotate = 180] [color={rgb, 255:red, 0; green, 0; blue, 0 }  ][line width=0.75]    (10.93,-3.29) .. controls (6.95,-1.4) and (3.31,-0.3) .. (0,0) .. controls (3.31,0.3) and (6.95,1.4) .. (10.93,3.29)   ;
\draw    (418.03,164) -- (432.03,164) ;
\draw [shift={(434.03,164)}, rotate = 180] [color={rgb, 255:red, 0; green, 0; blue, 0 }  ][line width=0.75]    (10.93,-3.29) .. controls (6.95,-1.4) and (3.31,-0.3) .. (0,0) .. controls (3.31,0.3) and (6.95,1.4) .. (10.93,3.29)   ;
\draw    (514.03,165) -- (540.32,165) ;
\draw [shift={(542.32,165)}, rotate = 180] [color={rgb, 255:red, 0; green, 0; blue, 0 }  ][line width=0.75]    (10.93,-3.29) .. controls (6.95,-1.4) and (3.31,-0.3) .. (0,0) .. controls (3.31,0.3) and (6.95,1.4) .. (10.93,3.29)   ;
\draw    (503,91) -- (599,154) ;

\draw (97,114.4) node [anchor=north west][inner sep=0.75pt]    {$G$};
\draw (279,50.4) node [anchor=north west][inner sep=0.75pt]    {$\textcolor[rgb]{0.25,0.46,0.02}{d_{1}^{-1}}$};
\draw (238.52,114.05) node [anchor=north west][inner sep=0.75pt]    {$\textcolor[rgb]{0.29,0.56,0.89}{q_{m+1}}$};
\draw (279,88.4) node [anchor=north west][inner sep=0.75pt]    {$\textcolor[rgb]{0.25,0.46,0.02}{d}\textcolor[rgb]{0.25,0.46,0.02}{_{i}^{-1}}$};
\draw (279,121.4) node [anchor=north west][inner sep=0.75pt]    {$\textcolor[rgb]{0.25,0.46,0.02}{d}\textcolor[rgb]{0.25,0.46,0.02}{_{i+1}^{-1}}$};
\draw (276,156.4) node [anchor=north west][inner sep=0.75pt]    {$\textcolor[rgb]{0.25,0.46,0.02}{d}\textcolor[rgb]{0.25,0.46,0.02}{_{r+1}^{-1}}$};
\draw (23,62.4) node [anchor=north west][inner sep=0.75pt]    {$1$};
\draw (23,98.4) node [anchor=north west][inner sep=0.75pt]    {$i$};
\draw (4,131.4) node [anchor=north west][inner sep=0.75pt]    {$i+1$};
\draw (4,164.4) node [anchor=north west][inner sep=0.75pt]    {$r+1$};
\draw (47,83.4) node [anchor=north west][inner sep=0.75pt]    {$...$};
\draw (46,154.4) node [anchor=north west][inner sep=0.75pt]    {$...$};
\draw (184,85.4) node [anchor=north west][inner sep=0.75pt]    {$...$};
\draw (186,156.4) node [anchor=north west][inner sep=0.75pt]    {$...$};
\draw (337,67.4) node [anchor=north west][inner sep=0.75pt]    {$tU_{+}$};
\draw (341,155.4) node [anchor=north west][inner sep=0.75pt]    {$U_{-}$};
\draw (471,68.4) node [anchor=north west][inner sep=0.75pt]    {$g\textcolor[rgb]{0.25,0.46,0.02}{d} U_{+}$};
\draw (438,156.4) node [anchor=north west][inner sep=0.75pt]    {$g( t'_{m})^{-1} U_{-}$};
\draw (405,73) node [anchor=north west][inner sep=0.75pt]    {$...$};
\draw (398,160) node [anchor=north west][inner sep=0.75pt]    {$...$};
\draw (554,155.4) node [anchor=north west][inner sep=0.75pt]    {$g\textcolor[rgb]{0.29,0.56,0.89}{e_{i}( q_{m+1})}( t'_{m+1})^{-1} U_{-}$};
\draw (523,168.4) node [anchor=north west][inner sep=0.75pt]    {$s_{i}$};

\end{tikzpicture}
    \caption{Inductive case II.}
    \label{fig:inductive_case_2}
\end{figure}

By Base Case II, after we apply a left action by $(g(t'_m)^{-1})^{-1}\cdot$, one can see the point corresponding to the last triangle has its generalized minors computed by the plabic fence in Figure~\ref{fig:inductive_case_2_last_triangle}. 

\begin{figure}[h!]
    \centering
    \tikzset{every picture/.style={line width=0.75pt}} 

\begin{tikzpicture}[x=0.75pt,y=0.75pt,yscale=-1,xscale=1]

\draw    (49,39) -- (49,97) ;
\draw    (82.03,111) -- (108.32,111) ;
\draw [shift={(110.32,111)}, rotate = 180] [color={rgb, 255:red, 0; green, 0; blue, 0 }  ][line width=0.75]    (10.93,-3.29) .. controls (6.95,-1.4) and (3.31,-0.3) .. (0,0) .. controls (3.31,0.3) and (6.95,1.4) .. (10.93,3.29)   ;
\draw    (71,37) -- (167,100) ;
\draw    (57,180) -- (57,238) ;
\draw    (74.03,251) -- (100.32,251) ;
\draw [shift={(102.32,251)}, rotate = 180] [color={rgb, 255:red, 0; green, 0; blue, 0 }  ][line width=0.75]    (10.93,-3.29) .. controls (6.95,-1.4) and (3.31,-0.3) .. (0,0) .. controls (3.31,0.3) and (6.95,1.4) .. (10.93,3.29)   ;
\draw    (79,178) -- (175,241) ;
\draw [line width=1.5]    (118,129) .. controls (119.67,130.67) and (119.67,132.33) .. (118,134) .. controls (116.33,135.67) and (116.33,137.33) .. (118,139) .. controls (119.67,140.67) and (119.67,142.33) .. (118,144) .. controls (116.33,145.67) and (116.33,147.33) .. (118,149) .. controls (119.67,150.67) and (119.67,152.33) .. (118,154) .. controls (116.33,155.67) and (116.33,157.33) .. (118,159) .. controls (119.67,160.67) and (119.67,162.33) .. (118,164) -- (118,168) -- (118,176) ;
\draw [shift={(118,179)}, rotate = 270] [color={rgb, 255:red, 0; green, 0; blue, 0 }  ][line width=1.5]    (14.21,-4.28) .. controls (9.04,-1.82) and (4.3,-0.39) .. (0,0) .. controls (4.3,0.39) and (9.04,1.82) .. (14.21,4.28)   ;
\draw    (446.81,169) -- (571.5,169) ;
\draw  [fill={rgb, 255:red, 0; green, 0; blue, 0 }  ,fill opacity=1 ][line width=1.5]  (503.88,204) .. controls (503.88,201.99) and (505.51,200.35) .. (507.52,200.35) .. controls (509.54,200.35) and (511.17,201.99) .. (511.17,204) .. controls (511.17,206.01) and (509.54,207.65) .. (507.52,207.65) .. controls (505.51,207.65) and (503.88,206.01) .. (503.88,204) -- cycle ;
\draw  [fill={rgb, 255:red, 255; green, 255; blue, 255 }  ,fill opacity=1 ][line width=1.5]  (503.88,169) .. controls (503.88,166.99) and (505.51,165.35) .. (507.52,165.35) .. controls (509.54,165.35) and (511.17,166.99) .. (511.17,169) .. controls (511.17,171.01) and (509.54,172.65) .. (507.52,172.65) .. controls (505.51,172.65) and (503.88,171.01) .. (503.88,169) -- cycle ;
\draw    (446.81,204) -- (572.33,204) ;
\draw    (446.81,238) -- (571.5,238) ;
\draw    (446.81,273) -- (571.5,273) ;
\draw  [fill={rgb, 255:red, 255; green, 255; blue, 255 }  ,fill opacity=1 ][line width=1.5]  (473.52,204) .. controls (473.52,201.99) and (475.15,200.35) .. (477.17,200.35) .. controls (479.18,200.35) and (480.82,201.99) .. (480.82,204) .. controls (480.82,206.01) and (479.18,207.65) .. (477.17,207.65) .. controls (475.15,207.65) and (473.52,206.01) .. (473.52,204) -- cycle ;
\draw  [fill={rgb, 255:red, 255; green, 255; blue, 255 }  ,fill opacity=1 ][line width=1.5]  (503.88,273) .. controls (503.88,270.99) and (505.51,269.35) .. (507.52,269.35) .. controls (509.54,269.35) and (511.17,270.99) .. (511.17,273) .. controls (511.17,275.01) and (509.54,276.65) .. (507.52,276.65) .. controls (505.51,276.65) and (503.88,275.01) .. (503.88,273) -- cycle ;
\draw  [fill={rgb, 255:red, 255; green, 255; blue, 255 }  ,fill opacity=1 ][line width=1.5]  (528,203.65) .. controls (528,201.63) and (529.63,200) .. (531.65,200) .. controls (533.66,200) and (535.3,201.63) .. (535.3,203.65) .. controls (535.3,205.66) and (533.66,207.3) .. (531.65,207.3) .. controls (529.63,207.3) and (528,205.66) .. (528,203.65) -- cycle ;
\draw    (507.52,204) -- (507.52,238) ;
\draw  [fill={rgb, 255:red, 255; green, 255; blue, 255 }  ,fill opacity=1 ][line width=1.5]  (503.67,238) .. controls (503.67,235.99) and (505.3,234.35) .. (507.32,234.35) .. controls (509.33,234.35) and (510.96,235.99) .. (510.96,238) .. controls (510.96,240.01) and (509.33,241.65) .. (507.32,241.65) .. controls (505.3,241.65) and (503.67,240.01) .. (503.67,238) -- cycle ;
\draw [line width=1.5]    (339,219) -- (391,219) ;
\draw [shift={(394,219)}, rotate = 180] [color={rgb, 255:red, 0; green, 0; blue, 0 }  ][line width=1.5]    (14.21,-4.28) .. controls (9.04,-1.82) and (4.3,-0.39) .. (0,0) .. controls (4.3,0.39) and (9.04,1.82) .. (14.21,4.28)   ;
\draw [shift={(336,219)}, rotate = 0] [color={rgb, 255:red, 0; green, 0; blue, 0 }  ][line width=1.5]    (14.21,-4.28) .. controls (9.04,-1.82) and (4.3,-0.39) .. (0,0) .. controls (4.3,0.39) and (9.04,1.82) .. (14.21,4.28)   ;

\draw (39,14.4) node [anchor=north west][inner sep=0.75pt]    {$g\textcolor[rgb]{0.25,0.46,0.02}{d} U_{+}$};
\draw (6,102.4) node [anchor=north west][inner sep=0.75pt]    {$g( t'_{m})^{-1} U_{-}$};
\draw (122,101.4) node [anchor=north west][inner sep=0.75pt]    {$g\textcolor[rgb]{0.29,0.56,0.89}{e}\textcolor[rgb]{0.29,0.56,0.89}{_{i}}\textcolor[rgb]{0.29,0.56,0.89}{(}\textcolor[rgb]{0.29,0.56,0.89}{q}\textcolor[rgb]{0.29,0.56,0.89}{_{m+1}}\textcolor[rgb]{0.29,0.56,0.89}{)}( t'_{m+1})^{-1} U_{-}$};
\draw (91,114.4) node [anchor=north west][inner sep=0.75pt]    {$s_{i}$};
\draw (47,155.4) node [anchor=north west][inner sep=0.75pt]    {$t'_{m}\textcolor[rgb]{0.25,0.46,0.02}{d} U_{+}$};
\draw (49,242.4) node [anchor=north west][inner sep=0.75pt]    {$U_{-}$};
\draw (109,242.4) node [anchor=north west][inner sep=0.75pt]    {$(t'_{m})^{-1}\textcolor[rgb]{0.29,0.56,0.89}{e}\textcolor[rgb]{0.29,0.56,0.89}{_{i}}\textcolor[rgb]{0.29,0.56,0.89}{(}\textcolor[rgb]{0.29,0.56,0.89}{q}\textcolor[rgb]{0.29,0.56,0.89}{_{m+1}}\textcolor[rgb]{0.29,0.56,0.89}{)}( t'_{m+1})^{-1} U_{-}$};
\draw (76.03,254.4) node [anchor=north west][inner sep=0.75pt]    {$s_{i}$};
\draw (110,266.4) node [anchor=north west][inner sep=0.75pt]    {$=\textcolor[rgb]{0.29,0.56,0.89}{e}\textcolor[rgb]{0.29,0.56,0.89}{_{i}}\textcolor[rgb]{0.29,0.56,0.89}{(}\textcolor[rgb]{0.29,0.56,0.89}{(}\textcolor[rgb]{0.29,0.56,0.89}{t'}\textcolor[rgb]{0.29,0.56,0.89}{_{m}}\textcolor[rgb]{0.29,0.56,0.89}{)}\textcolor[rgb]{0.29,0.56,0.89}{^{-\alpha _{i}}}\textcolor[rgb]{0.29,0.56,0.89}{q}\textcolor[rgb]{0.29,0.56,0.89}{_{m+1}}\textcolor[rgb]{0.29,0.56,0.89}{)}( t'_{m})^{-1}( t'_{m+1})^{-1} U_{-}$};
\draw (550,147.4) node [anchor=north west][inner sep=0.75pt]    {$\textcolor[rgb]{0.25,0.46,0.02}{d}\textcolor[rgb]{0.25,0.46,0.02}{_{1}^{-1}}$};
\draw (455,212.05) node [anchor=north west][inner sep=0.75pt]    {$\textcolor[rgb]{0.29,0.56,0.89}{(}\textcolor[rgb]{0.29,0.56,0.89}{t'}\textcolor[rgb]{0.29,0.56,0.89}{_{m}}\textcolor[rgb]{0.29,0.56,0.89}{)}\textcolor[rgb]{0.29,0.56,0.89}{^{-\alpha _{i}}}\textcolor[rgb]{0.29,0.56,0.89}{q}\textcolor[rgb]{0.29,0.56,0.89}{_{m+1}}$};
\draw (550,185.4) node [anchor=north west][inner sep=0.75pt]    {$\textcolor[rgb]{0.25,0.46,0.02}{d}\textcolor[rgb]{0.25,0.46,0.02}{_{i}^{-1}}$};
\draw (550,218.4) node [anchor=north west][inner sep=0.75pt]    {$\textcolor[rgb]{0.25,0.46,0.02}{d}\textcolor[rgb]{0.25,0.46,0.02}{_{i+1}^{-1}}$};
\draw (547,253.4) node [anchor=north west][inner sep=0.75pt]    {$\textcolor[rgb]{0.25,0.46,0.02}{d}\textcolor[rgb]{0.25,0.46,0.02}{_{r+1}^{-1}}$};
\draw (453,253.4) node [anchor=north west][inner sep=0.75pt]    {$...$};
\draw (449,181.4) node [anchor=north west][inner sep=0.75pt]    {$...$};
\draw (428,159.4) node [anchor=north west][inner sep=0.75pt]    {$1$};
\draw (428,195.4) node [anchor=north west][inner sep=0.75pt]    {$i$};
\draw (409,228.4) node [anchor=north west][inner sep=0.75pt]    {$i+1$};
\draw (409,261.4) node [anchor=north west][inner sep=0.75pt]    {$r+1$};

\end{tikzpicture}
    \caption{The correspondence between the last column in the plabic fence in inductive case II and the last triangle of flags.}
    \label{fig:inductive_case_2_last_triangle}
\end{figure}

Applying Lemma~\ref{lem:gluing_fences}, we get the following plabic fence in Figure~\ref{fig:inductive_case_2_glue}, whose generalized matchings compute exactly all the generalized minors of the point given by the $(n+m+1)$ triangles. After applying a gauge transformation to make the glued plabic fence right-canonically weighted, we get back plabic fence in Figure~\ref{fig:inductive_case_2}. 
\begin{figure}[h!]
    \centering
    \includestandalone[mode=tex, width=0.6\textwidth]{figures/inductive_case_2_glue}
    \caption{The glued plabic fence in inductive case II.}
    \label{fig:inductive_case_2_glue}
\end{figure}
\end{proof}

\begin{rem}\label{rem: technical_difficulty_of_contructions}
    As a concluding remark for this section, we explain some subtleties and technical difficulties of our construction. In the standard parameterization given by~\cite{Shen_Weng_2021}, we fix some diagonal matrix $t$ and normalize the first flags as $A^0 = tU_+, A_0 = U_-$. The decorations are then propagated to the other flags, albeit several diagonal matrices appear in the parametrization of each flag. To keep a single diagonal matrix component per decorated flag, we can naturally use Lemma~\ref{lem:torus_param}. Following this convention, it is more natural to use gauge transformations to normalize the horizontal edge weights of $G_\beta$ so that all horizontal edges but the leftmost ones have weights $1$, and use the weights on the leftmost horizontal edges as entries in $t$. This is enough when there are only ``white-over-black" columns in $G_\beta$. However, when there are two types of columns, in order for the generalized matching variables to compute the corresponding minors in $T(C_\beta)$, one is forced to assign edge weights to the rightmost horizontal edges which are non-trivial monomials on $t$ and the vertical edge weights. Thus, it is not straightforward to simply start with a weighted plabic fence as an input to build a cluster in the double Bott-Samelson variety as the edge weights must satisfy non-trivial relations. To fix this, we defined our right canonical gauge, and used the rightmost horizontal edge weights to fix the decoration $d$ on the top-right flag $A^m$ and propagate from right to left, while still being able to normalize the bottom-left flag to be $A_0 = U_-$. The advantage of using the right canonical gauge is that the only relation between non-trivial edge weights (i.e. the vertical edge weights and the rightmost horizontal edge weights) is that the rightmost horizontal edge weights multiply to 1. 
\end{rem}

\section{Type $A$ Chamber Ansatz on Double Bott-Samelson Varieties}\label{sec: chamber_ansatz}

In this section, we derive an inverse formula to reconstruct the edge weights of a right canonically weighted plabic fence $G_\beta$ from its associated generalized minors $A_{\binom{i}{a}}$. This parallels the Chamber Ansatz from \cite{DoubleBruhatCells} in the reduced case. In \cite{Shen_Weng_2021}, they derive a similar result for a canonical triangle in their parametrization. This section extends that to our modified parametrization, and provides a closed formula for the reconstruction of the weights for an arbitrary triangulation of the fence. 

We start by developing some dimer theory background adapted to the context of plabic fences. For now, we take any weighted plabic fence $G_\beta$ (not necessarily right canonically weighted), with edge weights $w: E(G_\beta)\rightarrow \mathbb{C}^*$. We use $\partial f$ to denote the set of boundary edges of $f$. Let $\mathcal{F}$ be any statement. We define $[\mathcal{F}]$ to be $1$ if $\mathcal{F}$ is true, and $0$ otherwise. In particular, we are interested in $[e\in \gamma$], where the statement means that path $\gamma$ visits edge $e$.

\begin{prop}\label{prop: interior_face_alternating_prod}
    Let $f$ be an interior face of $G_\beta$. Label the edges of $\partial f$ as $e_1,e_2,...,e_{2k-1}, e_{2k}$ clockwise, such that each $e_{2i-1}$ is oriented black-to-white clockwise, and each $e_{2i}$ is oriented white-to-black clockwise. Let $f_j$ denote the face neighboring $f$ along the edge $e_j, j=1,...,2k$, where we allow duplicates. Then we have
    \begin{equation}\label{eqn: monodromy_of_interior_face}
        \frac{\prod^k_{i=1}w(e_{2i})}{\prod^k_{i=1}w(e_{2i-1})} = \frac{\prod^k_{i=1}M_{f_{2i-1}}}{\prod^k_{i=1}M_{f_{2i}}}.
    \end{equation}
\end{prop}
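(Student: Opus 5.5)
The plan is to take logarithms and reduce the identity to a statement about the integers $U(e,f)$ for each fixed edge $e$. Since $M_g=\prod_{e\in E}w(e)^{-U(e,g)}$, the right-hand side of \eqref{eqn: monodromy_of_interior_face} equals
\[
\prod_{e\in E} w(e)^{-\sum_{j=1}^{2k}(-1)^{j+1}U(e,f_j)}.
\]
So it suffices to show, for every edge $e$ of $G_\beta$, that
\[
\sum_{i=1}^k \bigl(U(e,f_{2i})-U(e,f_{2i-1})\bigr)=[e\in\{e_2,e_4,\dots,e_{2k}\}]-[e\in\{e_1,e_3,\dots,e_{2k-1}\}].
\]
The statement allows duplicate neighbors, so all sums are taken with multiplicity. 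No relation among the weights is needed, so this should hold for arbitrary weights, as the proposition asserts.

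To compare $U(e,f_j)$ with $U(e,f)$, I use the propagation rule (ii) across the edge $e_j$. Crossing from $f$ into $f_j$ through $e_j$ changes $U(e,\cdot)$ by a signed count of how $\gamma^+_e$ and $\gamma^-_e$ traverse $e_j$, including the direction in which they traverse it. The key local fact concerns trips that pass through $f$'s boundary. Because trips turn maximally left at white vertices and maximally right at black vertices, a trip that enters the boundary of $f$ along some $e_j$ continues along the adjacent edge $e_{j\pm1}$. The orientations of consecutive boundary edges around $f$ alternate (black-to-white, then white-to-black), so such a trip runs along a consecutive pair $e_j,e_{j+1}$ of boundary edges in a fixed rotational sense. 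I would also use Lemma~\ref{rem:monotonicity}: strands are horizontally monotone, so near $f$ every traversal of a boundary edge is part of such a pair.

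I then check how each pair contributes to the alternating sum. A pair $(e_j,e_{j+1})$ has one even and one odd index and is crossed with the same side convention. Its two contributions to $U(e,f_{2i})-U(e,f_{2i-1})$ therefore cancel: the crossing arrows $e_j^*$ and $e_{j+1}^*$ cross the same strand in the same relative direction, but they enter the sum with opposite signs. This is the same telescoping used in the proof of the gauge-invariance proposition, where $\gamma^+_{e_i}$ and $\gamma^-_{e_{i-1}}$ overlap. The only uncancelled contributions come from where a strand begins. The strands $\gamma^\pm_e$ start at the midpoint of $e$, so if $e=e_j$ for some $j$, exactly one unpaired half-traversal occurs at $e_j$. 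By rule (i), together with the orientation conventions in rule (ii)(b), this produces $+1$ when $e_j$ is white-to-black clockwise ($j$ even) and $-1$ otherwise. If $e$ is not on $\partial f$, everything cancels and the contribution is $0$. This gives the displayed identity.

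The main obstacle is the careful sign bookkeeping in the local analysis. There are three things to pin down: which side of each strand $f$ lies on, the case where $e$ itself borders $f$ (so the generic arrow $(e_j)^*$ must avoid the midpoint), and the non-reduced situation where $\gamma^+_e$ or $\gamma^-_e$ may traverse $\partial f$ several times, or where $f_j$ coincide. I would handle this by working entirely with crossing counts of the closed small loop around $\partial f$. That loop crosses every strand segment an even number of times with cancelling signs, except at the starting midpoint. Phrasing it this way makes duplicates and multiple visits harmless, and I would first verify the resulting signs on the square face in Example~\ref{exmp: computing_gen_matching}.
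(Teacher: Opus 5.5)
Your route is the paper's: reduce to the per-edge identity for $\sum_i\bigl(U(e,f_{2i})-U(e,f_{2i-1})\bigr)$, read $U(e,f_j)-U(e,f)$ off how $\gamma^\pm_e$ traverse $e_j$, and cancel along the stretches where a strand follows $\partial f$. The cancellation step, however, is false as you state it. A strand need not meet $\partial f$ in a consecutive pair, because interior faces of plabic fences can have degree-$2$ vertices on their boundary.

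In the running example $\beta=(-1,1,2,-2,-1,2)$, the face $\binom{2}{2}$ between $q_2$ and $p_3$ has a degree-$2$ black vertex on its upper side. The strand $\gamma^+_{d_3}$ (from the rightmost edge of line $3$) goes up $p_3$, then west along line $2$ through that vertex and the white endpoint of $q_3$. It leaves only at the black upper endpoint of $q_2$, so it traverses four consecutive edges of $\partial\binom{2}{2}$. Lemma~\ref{rem:monotonicity} only gives that no edge is traversed twice; it says nothing about pairing.

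What is true is the following, and it is what the paper's ``equally many white-to-black and black-to-white edges'' encodes. A maximal right turn preserves the face on a strand's right, a maximal left turn the face on its left, and a degree-$2$ vertex both. So a maximal run of a strand along $\partial f$ keeps $f$ on a fixed side, and it can only begin and end at degree-$\geq 3$ vertices of one colour: white if $f$ is on its right, black if on its left. Hence every run not starting at the midpoint of $e$ has even length, alternating index parity and a constant crossing sign, and it cancels.

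For $e=e_j$ the same count is needed, and you assert the signs rather than derive them. Rule (ii)(b) as printed is reversed relative to rule (i) and to the example's matrix $U$: $\binom{1}{0}$ lies to the right of $\gamma^+_{q_1}$, yet $U(q_1,\binom{1}{0})=0<1=U(q_1,\binom{1}{1})$. Applied literally together with rule (i), it would put the value $2$ on the other side of $e$ and give the opposite residue.

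The convention actually used in the paper's proof and in Lemma~\ref{lem:gluing_fences} is this: crossing $\gamma^+_e$ from its left to its right lowers $U(e,\cdot)$ by $1$, and crossing $\gamma^-_e$ from its left to its right raises it by $1$. With it, the two cases go as follows.
\begin{itemize}
\item For $j$ even, $U(e,f_j)-U(e,f)=-1$. The strands $\gamma^+_e,\gamma^-_e$ continue along $e_{j-1},\dots$ and $e_{j+1},\dots$ for an odd number of full edges each. Each run has one more odd index than even, and every crossing contributes $-1$, so the total is $-1+1+1=1$.
\item For $j$ odd, both strands leave $\partial f$ after an even number of further edges. Only $U(e,f_j)-U(e,f)=+1$ survives, giving $-1$.
\end{itemize}
Your closing ``small closed loop'' device does not by itself produce the alternating sum, since the net change of $U(e,\cdot)$ around any closed loop is $0$. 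It is the run structure above that does the work.
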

\begin{proof}
    Recall $M_f = \prod_{e\in E}w(e)^{-U(e,f)}$. Thus, we need to compute $\sum^k_{i=1}U(e,f_{2i}) - \sum^k_{i=1}U(e,f_{2i-1})$.

    It follows from Lemma~\ref{rem:monotonicity} that any $\gamma^+_e$ (or $\gamma^-_e$) visits a disjoint union of the boundary edges of $f$, exactly once for each such edge, and all these edges are visited by $\gamma^+_e$ (or $\gamma^-_e$) in the same direction: either they all follow the clockwise orientation of $\partial f$ or they all follow the counterclockwise orientation of $\partial f$. Define $\text{sgn}(\gamma^+_e) = 1$ (or $\text{sgn}(\gamma^-_e) = -1$, resp.) if $\gamma^+_e$ (or $\gamma^-_e$, resp.) travels clockwise through $\partial f$, and $\text{sgn}(\gamma^+_e) = -1$ (or $\text{sgn}(\gamma^-_e) = 1$, resp.) if $\gamma^+_e$ (or $\gamma^-_e$, resp.) travels counterclockwise through $\partial f$.
    
    For any $e$ that is not a boundary edge of $f$, we have: 
    \begin{equation}\label{eqn: face_weight_change_for_non_midpoint_edge}
        U(e, f_j) - U(e,f) = \text{sgn}(\gamma^+_e)[e_j \in \gamma^+_e]+\text{sgn}(\gamma^-_e)[e_j \in \gamma^-_e]
    \end{equation}

    Thus, we have 

    \begin{equation}
        \begin{aligned}
        \sum_{i=1}^k U(e,f_{2i})
         -\sum_{i=1}^k U(e,f_{2i-1})
         &=
         \sum_{i=1}^k\bigl(U(e,f_{2i})-U(e,f)\bigr) -
         \sum_{i=1}^k\bigl(U(e,f_{2i-1})-U(e,f)\bigr) \\
         &=
         \operatorname{sgn}(\gamma_e^+)
         \sum_{j=1}^{2k}(-1)^j[e_j\in\gamma_e^+] +
         \operatorname{sgn}(\gamma_e^-)
         \sum_{j=1}^{2k}(-1)^j[e_j\in\gamma_e^-].
        \end{aligned}
    \end{equation}\label{eq:interior-face-sum}
    
    Note that, because $e\notin \partial f$, we note that $\gamma^+_e$ (or $\gamma^-_e$) visits equal number of white-to-black edges of $\partial f$ as that of black-to-white edges. See Figure~\ref{fig:strands_traveling_on_interior_face} left. Thus, $\sum^{2k}_{j=1}(-1)^j[e_j \in \gamma^+_e] = 0 = \sum^{2k}_{j=1}(-1)^j[e_j \in \gamma^-_e]$.

    Now consider $e = e_{2i_0}, 1\leq i_0\leq k$. We have $U(e,f_{2i_0}) - U(e,f) = -1$, and Equation~\ref{eqn: face_weight_change_for_non_midpoint_edge} holds for all $f_j, j\neq 2i_0$. Moreover, in this case we know $\text{sgn}(\gamma^+_e) = -1$ and $\text{sgn}(\gamma^-_e) = -1$. Note also that $\gamma^+_e$ (or $\gamma^-_e$) visits exactly one more black-to-white boundary edge than it visits white-to-black edges, not counting the edge $e_{2i_0}$. See Figure~\ref{fig:strands_traveling_on_interior_face} middle. This is because the first vertex $\gamma^+_e$ visits is white, and $\gamma^+_e$ departs away from $\partial f$ and returns to $\partial f$ at a black vertex of $\partial f$.
    
    Thus, we have

    \begin{equation}
        \begin{split}
            \sum^k_{i=1}U(e,f_{2i}) - \sum^k_{i=1}U(e,f_{2i-1})
            &= -1 -(\sum_{j\neq 2i_0}(-1)^j([e_j \in \gamma^+_e]+[e_j \in \gamma^-_e]))=-1+2 =1
        \end{split}
    \end{equation}

    If $e = e_{2i_0-1}, 1\leq i_0\leq k$, we have $U(e,f_{2i_0-1}) - U(e,f) = 1$, and Equation~\ref{eqn: face_weight_change_for_non_midpoint_edge} holds for all $f_j, j\neq 2i_0-1$. We also have $\text{sgn}(\gamma^+_e) = +1$ and $\text{sgn}(\gamma^-_e) = +1$. By a similar argument as above, we know $\gamma^+_e$ (or $\gamma^-_e$) visits an equal number of black-to-white-boundary edges as white-to-black boundary edges, not counting $e_{2i_0-1}$. See Figure~\ref{fig:strands_traveling_on_interior_face} right. Hence, in this case we have $\sum^k_{i=1}U(e,f_{2i}) - \sum^k_{i=1}U(e,f_{2i-1}) = -(U(e, f_{2i_0-1})-U(e,f))=-1$. 
\end{proof}

\begin{figure}[h!]
    \centering
    \includestandalone[mode=tex, width=0.8\textwidth]{figures/strands_traveling_on_interior_face}
    \caption{Illustration of how positive and negative strands coming from an edge $e$ behave along $\partial f$; Left: $e\notin \partial f$, Middle: $e$ is a clockwise white-to-black boundary edge of $f$, Right: $e$ is a black-to-white boundary edge of $f$.}
    \label{fig:strands_traveling_on_interior_face}
\end{figure} 

\begin{prop}\label{prop: boundary_face_alternating_prod}
    Let $f$ be a boundary face of $G_\beta$ (that is, a leftmost or rightmost face in some row). Label the edges of $\partial f$ as $e_1,e_2,...,e_{2k-1}, e_{2k}$ clockwise. Note that $k\geq 2$. Each $e_{2i+1}$ is oriented black-to-white clockwise, and each $e_{2i}$ is oriented white-to-black clockwise $i=1,...,k-1$, and $e_1$ ($e_{2k}$, resp.) is the first (last, resp.) edge of $\partial f$ with respect to a clockwise orientation. Let $f_j$ denote the face neighboring $f$ along the edge $e_j, j=1,...,2k$, where we allow duplicates. Then we have
    \begin{equation}\label{eqn: monodromy_of_boundary_face}
        \frac{\prod^k_{i=1}w(e_{2i})}{\prod^k_{i=1}w(e_{2i-1})} = \frac{\prod^k_{i=1}M_{f_{2i-1}}}{M_f\prod^{k-1}_{i=1}M_{f_{2i}}}
    \end{equation}
\end{prop}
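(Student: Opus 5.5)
I plan to mimic the proof of Proposition~\ref{prop: interior_face_alternating_prod}. Writing $M_g=\prod_{e}w(e)^{-U(e,g)}$, equation~\eqref{eqn: monodromy_of_boundary_face} is equivalent to the following identity, required for every edge $e$ of $G_\beta$:
\[
\sum_{i=1}^{k-1}U(e,f_{2i}) + U(e,f) - \sum_{i=1}^{k}U(e,f_{2i-1}) = [e\in\{e_{2i}\}] - [e\in\{e_{2i-1}\}].
\]
The key observation is that $f$ now plays the role of the ``missing'' neighbor across the boundary arc of the disk. Equivalently, the boundary arc behaves like an extra edge $e_0$ with neighbor $f_{2k}$ replaced by $f$ itself. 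To handle this, I will rewrite every term as a difference $U(e,f_j)-U(e,f)$. The left side then becomes $\sum_{i=1}^{k-1}(U(e,f_{2i})-U(e,f)) - \sum_{i=1}^{k}(U(e,f_{2i-1})-U(e,f))$: there are $k-1$ positive and $k$ negative differences, and the $+U(e,f)$ term cancels the leftover copy of $U(e,f)$.

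The next step is to apply equation~\eqref{eqn: face_weight_change_for_non_midpoint_edge} to each difference. This expresses the sum through the alternating counts $\sum_j(-1)^j[e_j\in\gamma^\pm_e]$ taken over $j=1,\dots,2k-1$, with the corrections of $\mp1$ coming from the edge $e$ itself when $e\in\partial f$. Here $e_{2k}$ is the last edge; the sign on $e_{2k}$ must be tracked separately, since $f_{2k}$ is excluded from the left side.

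The main obstacle is the parity count of a strand's visits to $\partial f$. For an interior face, each strand enters and leaves $\partial f$ at vertices of opposite colors, so its white-to-black and black-to-white visits balance. For a boundary face, a strand can instead exit $\partial f$ to the disk boundary. By Lemma~\ref{rem:monotonicity}, strands are horizontally monotone, and on a leftmost or rightmost face the only exits are the horizontal boundary-reaching edges $e_1$ and $e_{2k}$. I will therefore do a case analysis on whether $\gamma^\pm_e$ traverses $e_1$ and/or $e_{2k}$. In each case I will check that the imbalance of at most one, caused by exiting through $e_1$ or $e_{2k}$, is exactly compensated by dropping $f_{2k}$ and adding $U(e,f)$. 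I expect to do this by comparing with the direct value given by Lemma~\ref{lem:weight_on_initial_face}, and by computing $U$ across the boundary arc using a vertical path to the top face.

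The remaining cases are $e=e_j\in\partial f$, including the possibility that $e$ is $e_1$ or $e_{2k}$. These I will treat as in the middle and right pictures of Figure~\ref{fig:strands_traveling_on_interior_face}, using the signs $\operatorname{sgn}(\gamma^\pm_e)$ determined by the orientation of $e$. The extra subcase is when the strand issued from $e_1$ or $e_{2k}$ leaves directly to the boundary, and I will verify it by hand in the local picture of Figure~\ref{fig:top_face}.
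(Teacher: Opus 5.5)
Your outline is the paper's proof. You pass to $\sum_{j=1}^{2k-1}(-1)^j\bigl(U(e,f_j)-U(e,f)\bigr)$, apply \eqref{eqn: face_weight_change_for_non_midpoint_edge}, count strand visits to $\partial f$ with $e_{2k}$ omitted, and treat $e\in\partial f$ separately. The step to fix is the parity mechanism you call the main obstacle, which is misstated. A maximal run of a strand along $\partial f$ starts and ends at vertices of the \emph{same} color; a run between vertices of opposite colors has odd length and could never balance. Precisely, a strand arriving at a vertex of $\partial f$ along an outside edge turns onto $\partial f$ clockwise if the vertex is white, and counterclockwise if it is black. A clockwise run turns off $\partial f$ only at a trivalent white vertex, and a counterclockwise run only at a trivalent black vertex.

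With this corrected, the boundary case follows directly and needs no comparison with Lemma~\ref{lem:weight_on_initial_face}.
\begin{itemize}
\item A clockwise run can only exit through $e_{2k}$. Its unique surplus white-to-black edge is $e_{2k}$ itself, which is exactly the index absent from the sum.
\item A counterclockwise run can only exit through $e_1$ into a black boundary vertex, and it is balanced with $e_1$ counted.
\end{itemize}
So exits through $e_1$ cause no imbalance, and all the compensation comes from dropping $j=2k$; the extra $U(e,f)$ is already used up in passing to differences.

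Starting the same bookkeeping at the white or black endpoint of $e$ gives the paper's counts for $e=e_{2i_0-1}$ and $e=e_{2i_0}$. Your plan to check $e=e_{2k}$ by hand is well placed. There $\gamma^-_{e_{2k}}$ meets no other edge of $\partial f$, and there is no $j=2k$ correction term. The total $1$ therefore comes from $\gamma^+_{e_{2k}}$ alone, not from the pattern $-1+1+1$ of the other even edges. The paper's uniform count for even edges glosses over this point.
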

\begin{proof}
    We need to compute $U(e,f) + \sum^{k-1}_{i=1} U(e,f_{2i}) - \sum^{k}_{i=1}U(e,f_{2i-1})$ for all edges $e$. This is equivalent to $\sum^{k-1}_{i=1} (U(e,f_{2i})-U(e,f)) - \sum^{k}_{i=1}(U(e,f_{2i-1})-U(e,f))$. 
    
    For any $e\notin \partial f$, we note that $\gamma^+_e$ (or $\gamma^-_e$) visits equal number of white-to-black edges and black-to-white-edges of $\partial f$, not counting $e_{2k}$. Thus, plugging in ~\ref{eqn: face_weight_change_for_non_midpoint_edge}, we see $\sum^{k-1}_{i=1} (U(e,f_{2i})-U(e,f)) - \sum^{k}_{i=1}(U(e,f_{2i-1})-U(e,f)) = 0$. 

    For any $e = e_{2i_0} - 1, 1\leq i_0\leq k$, we note that $\text{sgn}(\gamma^+_e) = 1 = \text{sgn}(\gamma^-_e)$, and moreover, $\gamma^+_e$ (or $\gamma^-_e$) visits exactly the same number of black-to-white edges and white-to-black edges on $\partial f$, not counting $e_{2i_0-1}, e_{2k}$. By a similar calculation as in the previous proof, we get $\sum^{k-1}_{i=1} (U(e,f_{2i})-U(e,f)) - \sum^{k}_{i=1}(U(e,f_{2i-1})-U(e,f)) = -(U(e,f_{2i_0-1})-U(e,f)) = -1$.

    For any $e = e_{2i_0}, 1\leq i_0\leq k$, we note that $\text{sgn}(\gamma^+_e) = -1 = \text{sgn}(\gamma^-_e)$, and $\gamma^+_e$ (or $\gamma^-_e$) visits exactly one more black-to-white edge than white-to-black edges on $\partial f$, not counting $e_{2i_0}, e_{2k}$. Thus, $\sum^{k-1}_{i=1} (U(e,f_{2i})-U(e,f)) - \sum^{k}_{i=1}(U(e,f_{2i-1})-U(e,f)) = -(-1)= 1$.
\end{proof}
\begin{rem}
    Under the right-canonical weighting, the left-hand-side of ~\ref{eqn: monodromy_of_interior_face} and ~\ref{eqn: monodromy_of_boundary_face} simplifies to be $w(e_L)^{\delta(e_L)}w(e_R)^{\delta(e_R)}$ for any $f$ that is not the rightmost face in a row, where $e_L,e_R$ are the left, right vertical edges of the face $f$, and $\delta(e_L) = 1$ (or $\delta(e_R) = 1$) if $e_L$ (or $e_R$) is a black-over-white (or white-over-black) vertical edge, and $\delta(e_L) = -1$ (or $\delta(e_R) = -1$) if $e_L$ (or $e_R$) is a white-over-black (or black-over-white) vertical edge, and $\delta(e_L) = 0$ if $f$ is a leftmost boundary face.
\end{rem}

\begin{rem}
    The left-hand-side quantity in ~\ref{eqn: monodromy_of_interior_face} and ~\ref{eqn: monodromy_of_boundary_face} is known as a \textbf{face weight} and known to be an invariant under gauge transformations (see ~\cite[Section 11]{postnikov_2006}, also ~\cite[Section 2.3]{kenyon2007heightfluctuationshoneycombdimer}); we denote it as $X_f$. The right-hand-side quantity in ~\ref{eqn: monodromy_of_interior_face} is the \textbf{exchange ratio} (or variables in what is called \textbf{$Y$-seed} in ~\cite{fomin2024introductionclusteralgebraschapters}) at a mutable vertex in the quiver $Q_\beta$ (see also ~\cite{fraser2018braidgroupsymmetriesgrassmannian}).
\end{rem}

To derive a clean formula for each column weight $p$ (or $q$), we define a new quantity $B(f)$ for any face $f$, as the following ``alternating sign" product of generalized matching variables. 

\begin{defn}\label{defn: left_side_alt_prod}
    Let $f = \binom{i}{a}, a\geq 0, i\in [r]$ be a face of $G_\beta$ in the $i$-th row, define $B(f):= \displaystyle\prod^a_{a'=0}M^{\epsilon(\binom{i}{a'})}_{\binom{i}{a'}}$ (resp. $\hat{B}(f) := \displaystyle\prod^a_{a'=0}M^{\hat
{\epsilon}(\binom{i}{a'})}_{\binom{i}{a'}}$), where the exponent $\epsilon(\binom{i}{a'})$ (resp. $\hat
{\epsilon}(\binom{i}{a'})$) is defined as:
    \begin{enumerate}
        \item[(i)] $\epsilon(\binom{i}{0}) = -1$ (resp. $\hat{\epsilon}(\binom{i}{0}) = -1$) if the rightmost vertical edge of the boundary face $\binom{i}{0}$ is black-over-white, and $\epsilon(\binom{i}{0}) = 0$ (resp. $\hat{\epsilon}(\binom{i}{0}) = 0$) otherwise.
        \item[(ii)] If $a > 0 $, then $\epsilon(\binom{i}{a}) = -1$ (resp. $\hat{\epsilon}(\binom{i}{a}) = 0$) if the leftmost vertical edge of $\binom{i}{a}$ is white-over-black, and $\epsilon(\binom{i}{a}) = 0$ (resp. $\hat{\epsilon}(\binom{i}{a}) = 1$) otherwise.
        \item[(iii)] For any $0 < a' < a$, there are three cases:
        \begin{enumerate}
            \item[(1)] If the leftmost vertical edge is white-over-black and the rightmost vertical edge is black-over-white, then $\epsilon(\binom{i}{a'}) = -1$ (resp. $\hat{\epsilon}(\binom{i}{a'}) = -1$);
            \item[(2)] If the leftmost vertical edge is black-over-white and the rightmost vertical edge is white-over-black, then $\epsilon(\binom{i}{a'}) = 1$ (resp. $\hat{\epsilon}(\binom{i}{a'}) = 1$)
            \item[(3)] $\epsilon(\binom{i}{a'}) = 0$ (resp. $\hat{\epsilon}(\binom{i}{a'}) = 0$) otherwise. 
        \end{enumerate}   
    \end{enumerate}
    We also set the convention that if $f$ is the top face of $G_\beta$, then $B(f) = 1$ and $\hat{B}(f) = 1$; if $f$ is the bottom face of $G_\beta$, then $B(f) = M^{-1}_{f}$ and $\hat{B}(f) = 1$.
\end{defn}

We obtain the following inverse formula expressed in the flavor of Theorem 2.15 in ~\cite{TwistOnRichardson}. 
\begin{thm}\label{thm:chamber_ansatz}
    Let $G_\beta$ be a right-canonically weighted plabic fence associated with the double word $\beta$. Let $p$ (or $q$, resp.) denote the weight of a white-over-black (or black-over-white, resp.) vertical edge $e$ of $G_\beta$. Then we have
    \begin{equation}\label{eqn: inverse_edge_weights}
        p = \frac{B(f_{\rightarrow})B(f_{\leftarrow})}{B(f_\uparrow)B(f_{\downarrow})},\:\: q = \frac{\hat{B}(f_\uparrow)\hat{B}(f_{\downarrow})}{\hat{B}(f_{\rightarrow})\hat{B}(f_{\leftarrow})},
    \end{equation}
    where $f_{\leftarrow}, f_{\rightarrow}, f_{\uparrow}, f_{\downarrow}$ are the faces on the left, right, top, bottom of $e$, respectively. 
    
    In particular, if $\beta_- = \emptyset$ (or $\beta_+ = \emptyset$), the above formula reduces to 
    \begin{equation}
        p = \frac{M(f_{\rightarrow})M(f_{\leftarrow})}{M(f_\uparrow)M(f_{\downarrow})},\:\: q = \frac{M(f_\uparrow)M(f_{\downarrow})}{M(f_{\rightarrow})M(f_{\leftarrow})}.
    \end{equation}
\end{thm}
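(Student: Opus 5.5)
The plan is to derive the formula for $p$ and $q$ by telescoping the face-weight identities of Propositions~\ref{prop: interior_face_alternating_prod} and~\ref{prop: boundary_face_alternating_prod} along a single row. Under the right-canonical gauge, the remark following those propositions says that for every face $f=\binom{i}{a}$ other than the rightmost one in its row,
\[
X_f=w(e_L)^{\delta(e_L)}w(e_R)^{\delta(e_R)}
\]
is an exchange-ratio monomial in the $M$'s. This holds because only the vertical edges carry nontrivial weight away from the right end. So on row $i$, the chain of faces $\binom{i}{0},\binom{i}{1},\dots$ gives a triangular system in which the unknown vertical weights on the $i$-th row strip appear consecutively. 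Going left to right, I would solve this system for the weight of the right vertical edge of $\binom{i}{a}$ in terms of the exchange ratios of $\binom{i}{0},\dots,\binom{i}{a}$.

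First I would make the exchange ratios explicit for a face in row $i$ of a plabic fence. Its neighbors are the faces in rows $i\pm1$ lying above and below, through horizontal edges, together with the left and right neighbors in row $i$, through the vertical edges $e_L,e_R$. Each horizontal edge contributes $M_{\text{above}}$ or $M_{\text{below}}$ with a sign fixed by its bicoloring. I would record exactly which neighbor faces occur with which sign in terms of the colors of the vertices along the top and bottom boundary of $f$. Then I would check by induction on $a$ that the partial product with exponents $\epsilon$ (respectively $\hat\epsilon$) telescopes.

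The case analysis in Definition~\ref{defn: left_side_alt_prod} should match precisely the cancellations:
\begin{itemize}
\item an interior face of type white-over-black on the left and black-over-white on the right contributes $M^{-1}$;
\item the opposite type contributes $M^{+1}$;
\item faces with both sides of the same type drop out.
\end{itemize}
The first face $\binom{i}{0}$ and the last face $\binom{i}{a}$ get the special boundary exponents. The top and bottom conventions $B=1$ and $B=M^{-1}$ come from Lemma~\ref{lem: face_monomial_top_bottom} together with the boundary-face formula. After telescoping, the weight of $e$ (the right vertical edge of $f_\leftarrow$) equals the ratio in~\eqref{eqn: inverse_edge_weights}. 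The factors $B(f_\leftarrow)$ and $B(f_\rightarrow)$ come from row $i$. The factors $B(f_\uparrow)$ and $B(f_\downarrow)$ come from rows $i-1$ and $i+1$: the horizontal-edge neighbors of the row-$i$ faces are exactly the faces of those rows to the left of $e$, so their accumulated products reorganize into $B$ evaluated at the faces directly above and below $e$.

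I expect the main obstacle to be this reorganization of the upper and lower neighbors. A face in row $i$ may be adjacent to several faces in row $i\pm1$, and a face in row $i\pm1$ may be adjacent to several faces in row $i$. I must show that the net exponent of each $M_{\binom{i\pm1}{b}}$ in the telescoped product equals its $\epsilon$-exponent in $B(f_\uparrow)$ or $B(f_\downarrow)$. My plan is to induct on the number of columns, as in the proof of Theorem~\ref{thm:matching_equals_minor}. Appending one column to the left part of the fence changes only a bounded number of exponents, and I would verify the identity in the four cases: $e$ white-over-black or black-over-white, and the new column in the same row or an adjacent row.

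Finally, when $\beta_-=\emptyset$ all columns are white-over-black. Then $\epsilon$ vanishes except at the last face, where it is $-1$, and $B(f)=M_f^{-1}$; the top/bottom conventions agree with this via Lemma~\ref{lem: face_monomial_top_bottom}. This gives the reduced formula for $p$. The case $\beta_+=\emptyset$ is symmetric with $\hat B$.
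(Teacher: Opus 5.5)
Your route is the paper's route. In the right-canonical gauge the face weights telescope to $p=\prod_{a'=0}^{a}X_{\binom{i}{a'}}$ (with $f_\leftarrow=\binom{i}{a}$), and one then matches the exponent of each $M$ against Definition~\ref{defn: left_side_alt_prod}. The gap is that this matching fails whenever one of $f_\leftarrow,f_\uparrow,f_\downarrow$ is a leftmost face $\binom{\cdot}{0}$. In those cases the displayed identity is false, so more care with your ``special boundary exponents'' cannot close it.

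\begin{itemize}
\item If $a=0$, the product is $X_{\binom{i}{0}}$ alone, which by Proposition~\ref{prop: boundary_face_alternating_prod} contains $M_{\binom{i}{0}}^{-1}$. But rule (i) gives $\epsilon(\binom{i}{0})=0$, since its right edge is $e$, white-over-black. So $M_{\binom{i}{0}}$ has exponent $0$ in $B(f_\leftarrow)B(f_\rightarrow)$.
\item If $f_\uparrow=\binom{i-1}{0}$, the line-$i$ edges beneath it run from the black boundary vertex to the white top of $e$, so they always contribute $M_{\binom{i-1}{0}}^{+1}$. Yet $B(f_\uparrow)^{-1}$ supplies this only when the right edge of $\binom{i-1}{0}$ is black-over-white. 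The same happens for $f_\downarrow$.
\item In Base Case I (one white-over-black column in row $i$, $1<i<r$), the boundary-face identity gives $p=M_{\binom{i-1}{0}}M_{\binom{i+1}{0}}/(M_{\binom{i}{0}}M_{\binom{i}{1}})$. The displayed formula instead evaluates to $M_{\binom{i}{1}}^{-1}=(d_1\cdots d_i)^{-1}$, so the base case of your proposed column induction already fails.
\item In the running example, with the values of Figure~\ref{fig:gen_monomials} and $M_{\binom{2}{0}}=p_3d_1d_2/p_2$, the formula returns $p_1M_{\binom{2}{0}}^{-1}$ and $p_2M_{\binom{2}{0}}$. Only $p_3$, the case checked in the paper, comes out right.
\end{itemize}

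The paper's proof has the same blind spot. Its degree count for $M_{\binom{i}{0}}$ uses $X_{\binom{i}{1}}$, i.e.\ it tacitly assumes $a\ge1$, and the rows $i\pm1$ are left as ``similar''.

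What your method does establish is a corrected statement. Treat the left boundary as a phantom white-over-black column, so that rule (ii) also applies at $a=0$: $B(\binom{i}{0})=M_{\binom{i}{0}}^{-1}$ and $\hat B(\binom{i}{0})=1$ when $\binom{i}{0}$ is the argument. Rule (i) then becomes case (iii) for the phantom edge.

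For rows $i\pm1$ you do not need the column induction. The exponent of $M_{\binom{i-1}{b}}$ in the telescoped product is the signed count of the line-$i$ edges beneath that face, read left to right: $+1$ for black-to-white and $-1$ for white-to-black. This count equals $[\text{right end white}]-[\text{left end white}]$. The ends are bottoms of row-$(i-1)$ columns (black for white-over-black, white for black-over-white), the black boundary vertex, or the top of $e$. This reproduces (i)--(iii) directly. Line $i+1$ works the same way, read right to left and stopping at the white vertex next to the boundary, because the boundary-face formula replaces the last edge's neighbour by $M_{\binom{i}{0}}$.

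Finally, your last paragraph does not give the displayed reduced formula:
\begin{itemize}
\item If $B(f)=M_f^{-1}$ for all faces, the general formula yields $p=M(f_\uparrow)M(f_\downarrow)/(M(f_\leftarrow)M(f_\rightarrow))$. This is the reciprocal of what is displayed, and Base Case I confirms the displayed expression equals $p^{-1}$.
\item Under rule (i) as written, $B(\binom{i}{0})=1$, not $M_{\binom{i}{0}}^{-1}$.
\item The case $\beta_+=\emptyset$ is not symmetric. There $\hat B(\binom{i}{a})=M_{\binom{i}{a}}M_{\binom{i}{0}}^{-1}$ for $a>0$. In Base Case II the correct identity is $q=M_{\binom{i}{0}}/M_{\binom{i}{1}}$, whereas the displayed reduced formula gives $q\,d_{i+1}/d_i$.
\end{itemize}
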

\begin{exmp}
    We continue with Example~\ref{exmp: computing_gen_matching}. We demonstrate here how to use Equation~\ref{eqn: inverse_edge_weights} to obtain an expression of $p_3$ in terms of the generalized matching variables $M_f$.  

    An illustration of $f_{\leftarrow}, f_\rightarrow, f_\uparrow, f_\downarrow$ of $p_3$ is provided in the leftmost column of Figure~\ref{fig:inverse_formula_for_column_weights}. We first compute the exponents of $B(f_\uparrow), B(f_\downarrow)$ as in the first row of the second and third columns in Figure~\ref{fig:inverse_formula_for_column_weights}, and $B(f_{\rightarrow}), B(f_\leftarrow)$ are in the second row of the second and third columns in Figure~\ref{fig:inverse_formula_for_column_weights}. Note that because $p_3$ is a white-over-black edge, we set the convention to be $B(f_\downarrow) = M^{-1}_{f_\downarrow}$.

    Thus, the right hand side of Equation~\ref{eqn: inverse_edge_weights} gives us
    \begin{equation}
        \begin{split}
            \frac{B(f_{\rightarrow})B(f_{\leftarrow})}{B(f_\uparrow)B(f_{\downarrow})} &= \frac{M^{-2}_{\binom{2}{1}}M_{\binom{2}{2}}M^{-1}_{\binom{2}{3}}}{M^{-1}_{\binom{1}{0}}M_{\binom{1}{1}}M^{-1}_{\binom{1}{2}}M^{-1}_{f_\downarrow}}\\
            &= \frac{\left(\frac{d_1d_3}{p_3}\right)^{-2}\frac{d_1d_3}{q_1q_2p_3}\left(\frac{d_1d_2}{q_1q_2}\right)^{-1}}{\left(\frac{p_3d_2}{p_1p_2}\right)^{-1}\frac{p_3d_2}{p_1q_1p_2}\left(\frac{d_1}{q_1}\right)^{-1}(d_1d_2d_3)^{-1}}=\frac{(d_1d_3)^{-1}p_3}{(d_1d_3)^{-1}} = p_3
        \end{split}
    \end{equation}
    \begin{figure}[h!]
    \centering
    \includestandalone[mode=tex, width=\textwidth]{figures/inverse_formula_for_column_weights}
    \caption{Left: Positions of $f_{\leftarrow}, f_\rightarrow, f_\uparrow, f_\downarrow$ relative to the vertical edge $p_3$; Middle and Right: Exponents of $B(f_\uparrow), B(f_\downarrow), B(f_\rightarrow), B(f_\leftarrow)$, respectively.}
    \label{fig:inverse_formula_for_column_weights}
\end{figure}
\end{exmp}

\begin{proof}[Proof of Theorem~\ref{thm:chamber_ansatz}]
    We only prove the formula for the weight $p$ of a white-over-black vertical edge. The proof for the formula of $q$ should be similar. Let $f_\leftarrow = \binom{i}{a}$. Then, we note that $p$ can be computed by the product of all the face alternating products to its left, $\displaystyle\prod^a_{a'=0} X_{\binom{i}{a'}}$, because the horizontal edge weights of a right-canonically weighted $G_\beta$ are $1$ everywhere except for the rightmost horizontal edge weights. Substituting in the right-hand-sides of formulas in Proposition~\ref{prop: interior_face_alternating_prod} and ~\ref{prop: boundary_face_alternating_prod}, we can figure out the contribution of each $M_f$ as follows.
    
    If $\binom{i}{0}$'s right-side vertical edge is black-over-white, then $M_{\binom{i}{0}}$ appears in the denominator of $X_{\binom{i}{0}}$, and in the denominator of $X_{\binom{i}{1}}$. Thus the degree of $M_{\binom{i}{0}}$ is $-2$. If $\binom{i}{0}$'s right-side vertical edge is white-over-black, then $M_{\binom{i}{0}}$ appears in the denominator of $X_{\binom{i}{0}}$, but appears in the numerator of $X_{\binom{i}{1}}$, so the total degree of $M_{\binom{i}{0}}$ is $0$. This matches the exponents in two cases of Definition~\ref{defn: left_side_alt_prod} (i).

    For any face $\binom{i}{a'}, a > a'\geq 1$ in the $i$th row whose left and right vertical edges are of the same type, $M_{\binom{i}{a'}}$ appears in the denominator, numerator of $X_{\binom{i}{a'-1}}$, $X_{\binom{i}{a'+1}}$ exactly once and they cancel each other out, contributing to a total degree of $0$. This is consistent with Definition~\ref{defn: left_side_alt_prod} (iii)-(3).

    For any face $\binom{i}{a'}, a > a'\geq 1$ in the $i$th row whose left vertical edge is black-over-white and whose right vertical edge is white-over-black, $M_{\binom{i}{a'}}$ appears in the numerator of both $X_{\binom{i}{a'-1}}$ and $X_{\binom{i}{a'+1}}$, so its total degree is $2$, which is consistent with Definition~\ref{defn: left_side_alt_prod} (iii)-(2). Similarly, the total degree of $M_{\binom{i}{a'}}$ for a face whose left vertical edge is white-over-black and whose right vertical edge is black-over-white is $-2$, matching Definition~\ref{defn: left_side_alt_prod} (iii)-(1).

    For the face $\binom{i}{a}$, if its leftmost vertical edge is white-over-black, then $M_{\binom{i}{a}}$ appears in the denominator of $X_{\binom{i}{a-1}}$, otherwise $M_{\binom{i}{a}}$ appears in the numerator of $X_{\binom{i}{a-1}}$. Note that $M_{\binom{i}{a+1}}$ always appears in the denominator of $X_{\binom{i}{a}}$, because the vertical edge for $p$ is white-over-black. This matches the degree of $M_{\binom{i}{a}}, M_{\binom{i}{a+1}}$ in $B(f_{\leftarrow})B(f_{\rightarrow})$ by Definition~\ref{defn: left_side_alt_prod} (ii).

    Thus, so far we have checked that the degrees of $M_{\binom{i}{a'}}, \forall a'$ are the same in both $\displaystyle\prod^a_{a'=0} X_{\binom{i}{a'}}$ and $\frac{B(f_{\rightarrow})B(f_{\leftarrow})}{B(f_\uparrow)B(f_{\downarrow})}$. One can also check the degrees of $M_{\binom{i-1}{b}}, M_{\binom{i+1}{c}}, \forall b,c$ in a similar way.
\end{proof}

\printbibliography

\end{document}